\documentclass[onefignum,onetabnum]{siamart220329}




\usepackage{lipsum}
\usepackage{amsfonts}
\usepackage{graphicx}
\usepackage{epstopdf}
\usepackage{algorithmic}
\usepackage{enumerate}
\usepackage{mathtools}
\usepackage{amssymb}
\usepackage{mathrsfs}
\usepackage{multirow}
\ifpdf
  \DeclareGraphicsExtensions{.eps,.pdf,.png,.jpg}
\else
  \DeclareGraphicsExtensions{.eps}
\fi

\usepackage{enumitem}
\setlist[enumerate]{leftmargin=.5in}
\setlist[itemize]{leftmargin=.5in}


\newsiamremark{remark}{Remark}
\newsiamremark{hypothesis}{Hypothesis}
\crefname{hypothesis}{Hypothesis}{Hypotheses}
\newsiamthm{claim}{Claim}

\newtheorem{example}[theorem]{Example}

\newtheorem{thm}[theorem]{Theorem}

\headers{Sparse Polynomial Optimization with Unbounded Sets}{L. Huang, S. Kang, J. Wang, and H. Yang}

\title{Sparse Polynomial Optimization with Unbounded Sets\thanks{Submitted to the editors DATE.
\funding{This work was supported by the NSFC under grants 12201618 and 12171324.}}}

\author{Lei Huang\thanks{Department of Mathematics, University of California San Diego
  (\email{leh010@ucsd.edu}).} \and Shucheng Kang\thanks{School of Engineering and Applied Sciences, Harvard University (\email{skang1@g.harvard.edu}).} \and Jie Wang\thanks{Academy of Mathematics and Systems Science, Chinese Academy of Sciences (AMSS-CAS)
  (\email{wangjie212@amss.ac.cn}).} \and Heng Yang\thanks{School of Engineering and Applied Sciences, Harvard University (\email{hankyang@seas.harvard.edu}).}}

\usepackage{amsopn}





\newcommand{\R}{{\mathbb R}}

\newcommand{\N}{\mathbb{N}}
\newcommand{\bx}{\mathbf{x}}
\newcommand{\by}{\mathbf{y}}

\newcommand{\w}{\mathbf{w}}
\newcommand{\bu}{\mathbf{u}}

\def\ba{{\boldsymbol{\alpha}}}

\newcommand{\bv}{\mathbf{v}}

\newcommand{\re}{\mathbb{R}}

\newcommand{\cl}[1]{\mathrm{cl}(#1)}

\newcommand{\mR}{\mathbb{R}}

\newcommand{\nn}{\nonumber}

\newcommand{\reff}[1]{(\ref{#1})}

\newcommand{\qmod}[1]{\mbox{QM}[#1]}
\newcommand{\ideal}[1]{\mbox{Ideal}[#1]}
\newcommand{\st}{\mbox{\rm{s.t.}}}
\newcommand{\bdes}{\begin{description}}
\newcommand{\edes}{\end{description}}
\newcommand{\bal}{\begin{align}}
\newcommand{\eal}{\end{align}}
\newcommand{\bnum}{\begin{enumerate}}
\newcommand{\enum}{\end{enumerate}}
\newcommand{\bit}{\begin{itemize}}
\newcommand{\eit}{\end{itemize}}
\newcommand{\bea}{\begin{eqnarray}}
\newcommand{\eea}{\end{eqnarray}}
\newcommand{\be}{\begin{equation}}
\newcommand{\ee}{\end{equation}}
\newcommand{\baray}{\begin{array}}
\newcommand{\earay}{\end{array}}
\newcommand{\bsry}{\begin{subarray}}
\newcommand{\esry}{\end{subarray}}
\newcommand{\bca}{\begin{cases}}
\newcommand{\eca}{\end{cases}}
\newcommand{\bcen}{\begin{center}}
\newcommand{\ecen}{\end{center}}
\newcommand{\bbm}{\begin{bmatrix}}
\newcommand{\ebm}{\end{bmatrix}}
\newcommand{\bmx}{\begin{matrix}}
\newcommand{\emx}{\end{matrix}}
\newcommand{\bpm}{\begin{pmatrix}}
\newcommand{\epm}{\end{pmatrix}}
\newcommand{\btab}{\begin{tabular}}
\newcommand{\etab}{\end{tabular}}

\newif\ifcomment
\commentfalse
\commenttrue

\begin{document}

\maketitle

\begin{abstract}
This paper considers sparse polynomial optimization with unbounded sets. When the problem possesses correlative sparsity, we propose a sparse homogenized Moment-SOS hierarchy with perturbations to solve it. 
The new hierarchy  introduces one extra auxiliary variable for each variable clique according to the correlative sparsity pattern. Under the running intersection property, we prove that this hierarchy has asymptotic convergence. Furthermore, we provide two alternative sparse hierarchies to remove perturbations while preserving asymptotic convergence. As byproducts, new Positivstellens\"atze are obtained for sparse positive polynomials on unbounded sets.
Extensive numerical experiments demonstrate the power of our approach in solving sparse polynomial optimization problems on unbounded sets with up to thousands of variables. Finally, we apply our approach to tackle two trajectory optimization problems (block-moving with minimum work and optimal control of Van der Pol).
\end{abstract}

\begin{keywords}
polynomial optimization, unbounded set, Moment-SOS hierarchy, sparsity, semidefinite relaxation
\end{keywords}

\begin{AMS}
  90C23, 90C17, 90C22, 90C26
\end{AMS}

\section{Introduction}
In this paper, we consider the polynomial optimization problem (POP): 
\be\label{1.1}
\left\{\baray{lll}
\inf & f(\bx) \\
\st  & g_{j}(\bx) \geq0,\quad j =1,\dots,m,
\earay \right.
\ee
where $f(\bx), g_j(\bx)$ are polynomials in $\bx \coloneqq (x_1,\dots,x_n) \in \re^n$.
Let $K$ denote the feasible set of $(\ref{1.1})$ and let $f_{\min}$ denote the optimal value of (\ref{1.1}). Throughout the paper, we assume that $f_{\min}>-\infty$.	
The Moment-SOS hierarchy proposed by Lasserre~\cite{Las01} is efficient in solving \reff{1.1}. Under the Archimedeanness of constraining polynomials (the feasible set $K$ must be compact in this case; see \cite{Las01,laurent2007semidefinite}), it yields a sequence of semidefinite relaxations whose optimal values converge to $f_{\min}$. Furthermore, it was shown in \cite{huang2023finite,nie2014optimality} that the Moment-SOS hierarchy converges in finitely many steps if  standard optimality conditions hold at every global minimizer. We refer  to  books and surveys \cite{lasserre2015introduction,laurent2009sums,marshall2008positive,nie2023moment} for more general introductions to polynomial optimization.

When the feasible set $K$ is unbounded, the classical Moment-SOS hierarchy typically does not converge.
There exist some works on solving polynomial optimization with unbounded sets.
Based on Karush-Kuhn-Tucker (KKT) conditions and Lagrange multipliers, Nie proposed tight Moment-SOS relaxations for solving \reff{1.1} \cite{nie2019tight}.
In \cite{jeyakumar2014polynomial}, the authors proposed Moment-SOS relaxations by adding sublevel set constraints. The resulting hierarchy of relaxations is also convergent under the Archimedeanness for the new constraints. Based on Putinar-Vasilescu's Positivstellensatz \cite{putinar1999positive,putinar1999solving}, Mai, Lasserre, and Magron \cite{nonneg} proposed a new hierarchy of Moment-SOS relaxations by adding a small perturbation to the objective, and convergence to a neighborhood of $f_{\min}$ was proved if the optimal value is achievable.
The complexity of this new hierarchy
was studied in \cite{pvrate}. Recently, a homogenized Moment-SOS hierarchy was proposed in \cite{huang2023homogenization} to solve polynomial optimization with unbounded sets employing homogenization techniques, and finite convergence was proved if standard optimality conditions hold at every global minimizer, including those at infinity. A theoretically interesting problem for polynomial optimization with unbounded sets is the case where the optimal value is not achievable. We refer to \cite{ha2009solving,huang2023homogenization, pham2023tangencies,schweighofer2006global,vui2008global} for related works.

A drawback of the Moment-SOS hierarchy is its limited scalability. This is because the size of involved matrices at the $k$th order relaxation is $\tbinom{n+k}{k}$ which increases rapidly as $n$, $k$ grow, and current semidefinite program (SDP) solvers based on interior-point methods can typically solve SDPs involving matrices of moderate size (say, $\leq 2,000$) in reasonable time on a standard laptop \cite{toh2018some}. An important way to improve the scalability is exploiting sparsity of inputting polynomials. There are two types of sparsity patterns in the literature to reduce the size of SDP relaxations: {\it correlative sparsity} and {\it term sparsity}. Correlative sparsity \cite{waki2006sums} considers the sparsity pattern of variables. The resulting sparse Moment-SOS hierarchy is obtained by building blocks of SDP matrices with respect to subsets of the input variables. Under the so-called running intersection property (RIP) and Archimedeanness, this sparse hierarchy was shown to have asymptotic convergence in \cite{grimm2007note,kojima2009note,lasserre2006convergent}. In contrast, term sparsity proposed by Wang et al. \cite{chordaltssos, tssos} considers the sparsity of monomials or terms. One can obtain a two-level block Moment-SOS hierarchy by using a two-step iterative procedure (a support extension operation followed by a block closure or chordal extension operation) to exploit term sparsity. For both types of sparsity, if the size of obtained SDP blocks is relatively small, then the resulting SDP relaxations are more tractable and computational costs can be significantly reduced. They have been successfully applied to solve optimal power flow problems \cite{josz2018lasserre,acopf}, round-off error bound analysis \cite{magron2018interval}, noncommutative polynomial optimization \cite{klep2021sparse,nctssos}, neural network verification \cite{newton2023sparse}, dynamical systems analysis \cite{sparsedynsys}, etc.
 
However, the above sparse Moment-SOS hierarchies may not converge when $K$ is unbounded as in the dense case. For the unbounded case, Mai, Lasserre, and Magron \cite{mai2023sparse} have recently provided a sparse version of Putinar–Vasilescu’s Positivstellensatz. To be more specific, it was proved that if the problem \reff{1.1} admits a correlative sparsity pattern $(\bx(1)\dots,\bx(p))$ satisfying the RIP and $f\geq 0$ on $K$, then for every $\epsilon>0$,
there exist sums of squares $\sigma_{0,\ell}$, $\sigma_{j,\ell}$, $j \in J_{\ell}$ of suitable degrees in variables $\bx(\ell)$, $\ell=1, \ldots, p$ such that
$$
f+\varepsilon \sum_{\ell=1}^p \bigg(1+\sum\limits_{x_i\in \bx(\ell)} x_i^2\bigg)^{d}=\sum_{\ell=1}^p \frac{\sigma_{0, \ell}+\sum_{j \in J_{\ell}} \sigma_{j, \ell} g_j}{\Theta_{\ell}^k},
$$ 
where $d \geq 1+\lfloor\operatorname{deg}(f) / 2\rfloor$ and $\Theta_{\ell}^k$, $\ell=1, \ldots, p$ are typically high-degree denominators (see Section \ref{pocs} for related notations and concepts).
Based on this, a sparse Moment-SOS hierarchy with perturbations is proposed to solve sparse polynomial optimization with unbounded sets. However, due to the occurrence of high-degree denominators, it is limited to solving problems with up to $10$ variables. The computational benefit of this sparse hierarchy is hence rather limited, and it is essentially a theoretical result as stated in \cite{mai2023sparse}.

\subsection*{Contributions} 
This paper studies sparse polynomial optimization with unbounded sets using homogenization techniques. Our new contributions are as follows.

\bit

\item [I.] When the problem \reff{1.1} admits correlative sparsity, we propose a sparse homogenized reformulation for \reff{1.1} while preserving the correlative sparsity pattern of the original problem. The sparse reformulation introduces two new types of variables. One is the homogenization variable, and the other consists of auxiliary variables associated to each variable clique. Then we apply the sparse Moment-SOS hierarchy to solve the new reformulation with a small perturbation. Under the RIP, we prove that the sequence of lower bounds produced by this hierarchy converges to a near neighborhood of $f_{\min}$.
\item [II.] To remove undesired perturbations, we also propose two alternative sparse homogenized reformulations of \reff{1.1} at the cost of possibly increasing the maximal clique size. We establish asymptotic convergence of the resulting sparse Moment-SOS hierarchies to $f_{\min}$. 
\item [III.] Based on the sparse homogenized reformulations, novel Positivstellens\"atze are provided for sparse positive polynomials on unbounded sets. 
\item [IV.] Diverse numerical experiments demonstrate that our approach performs much better than the usual sparse Moment-SOS hierarchy when solving sparse polynomial optimization on unbounded sets. In fact, with it we are able to handle such problems with up to thousands of variables!
\item [V.] To further illustrate its power, we apply our approach to trajectory optimization problems arising from the fields of robotics and control. It turns out that our approach can achieve global solutions for those problems with high accuracy.

\eit 

The rest of this paper is organized as follows.
Section~\ref{sc:pre} reviews some basics
about polynomial optimization.
Section~\ref{sc:shmg} introduces the sparse homogenized Moment-SOS hierarchy with perturbations and presents its asymptotic convergence result. Then Positivstellens\"atze with perturbations are provided.
In Section~\ref{sc:shmg2}, we introduce two alternative sparse homogenized Moment-SOS hierarchies without perturbations and prove their asymptotic convergence. Positivstellens\"atze without perturbations are provided.
Numerical experiments are presented in Section~\ref{num:ex}. Applications to trajectory optimization are provided in Section~\ref{tra:opt}. Section~\ref{sec:con} draws conclusions and make some discussions.

\section{Notations and preliminaries}\label{sc:pre}
\subsection*{Notation}
The symbol $\mathbb{N}$ (resp., $\mathbb{R}$) denotes the set of nonnegative integers (resp., real numbers). For $n\in\mathbb{N}$, let $[n]\coloneqq\{1,\dots, n\}$. Let $\bx\coloneqq (x_1,\dots,x_n)$ denote a tuple of variables and let $\bx^{2}\coloneqq(x_1^2,\dots,x_n^2)$. By slight abuse of notation, we also view $\bx$ as a set, i.e., $\bx=\{x_1,\dots,x_n\}$. For $\ba=(\alpha_1,\dots,\alpha_n)\in\N^n$, let
\[\bx^{\ba}\coloneqq x_1^{\alpha_1}\cdots x_n^{\alpha_n},~~\lvert\ba\rvert  \coloneqq \alpha_{1}+\cdots+\alpha_{n}.
\] 
For $k\in\N$, let $\mathbb{N}_{k}^{n}\coloneqq\left\{\ba \in \mathbb{N}^{n}\mid |\ba|\le k \right\}$. Denote by $[\bx]_k$  the vector of all monomials in $\bx$ with degrees $\leq k$, i.e.,
\[[\bx]_k\coloneqq [1,  x_1, x_2, \ldots, x_1^2, x_1x_2, \ldots,x_1^k, x_1^{k-1}x_2, \ldots, x_n^k ]^{\intercal}.\]
Let $\mathbb{R}[\bx] \coloneqq \mathbb{R}[x_1,\dots,x_n]$ be the ring of polynomials in $\bx$ with real coefficients, and $\mathbb{R}[\bx]_k\subseteq\re[\bx]$ is the subset of polynomials with degrees  $\leq k$. For a polynomial $p\in\re[\bx]$, denote by $\deg(p)$, $p^{(\infty)}$,  $\tilde{p}$ its total degree,  highest degree part and homogenization with respect to the homogenization variable $x_0$ (i.e., $\tilde{p}(\tilde{\bx})=x_0^{\deg(p)} p(\bx/x_0)$ with $\tilde{\bx} \coloneqq (x_0,x_1,\dots,x_n)$), respectively.
A homogeneous polynomial is said to be a \emph{form}.
A form $p$ is \emph{positive definite} if $p(\bx)>0$ for all nonzero $\bx \in \re^n$.
We write $A \succeq 0$ to indicate that a symmetric matrix $A$ is positive semidefinite. For a vector $\bv\in\R^n$, $\|\bv\|$ denotes the standard Euclidean norm. We write $\mathbf{0}$ (resp., $\mathbf{1}$) for the zero (resp., all-one) vector whose dimension is clear from the context. For $t\in\re$, $\lceil t\rceil$ denotes the smallest integer greater than or equal to $t$.

\subsection{Some basics for polynomial optimization}\label{ssc:pre:pop}
We review some basics in real algebraic geometry and polynomial optimization, referring to \cite{lasserre2015introduction,laurent2009sums,nie2023moment} for more details.

A subset $I\subseteq \mathbb{R}[\bx]$ is called an \emph{ideal} of $\re[\bx]$ if $I \cdot \mathbb{R}[\bx] \subseteq I$, $I+I \subseteq I$. For a polynomial tuple $h  \coloneqq (h_1,\dots, h_l)$, $\ideal{h}$ denotes the ideal generated by $h$, i.e.,
\begin{equation*}
\ideal{h} \coloneqq h_1 \cdot \mathbb{R}[\bx]+\cdots+h_l \cdot \mathbb{R}[\bx].
\end{equation*}
For $k\in\N$, the $k$th degree truncation of $\ideal{h}$ is
\begin{equation*}
\ideal{h}_{k} \coloneqq h_1 \cdot \mathbb{R}[\bx]_{k-\deg(h_1)}+\cdots+h_l \cdot \mathbb{R}[\bx]_{k-\deg(h_l)}.
\end{equation*}
Given a subset of variables $\bx^{\prime} \subseteq \bx$, if the polynomial tuple $h \in \mathbb{R}[\bx^{\prime}]^l$, 
we denote the ideal generated by $h$ in $ \mathbb{R}[\bx^{\prime}]$ by
\begin{equation*}
\ideal{h,\bx^{\prime} } \coloneqq h_1 \cdot \mathbb{R}[\bx^{\prime} ]+\cdots+
h_l \cdot \mathbb{R}[\bx^{\prime}].
\end{equation*}
Its $k$th degree truncation is defined as
\begin{equation*}
\ideal{h,\bx^{\prime} }_{k} \coloneqq h_1 \cdot \mathbb{R}[\bx^{\prime} ]_{k-\deg(h_1)}+\cdots+
h_l \cdot \mathbb{R}[\bx^{\prime}]_{k-\deg(h_l)}.
\end{equation*}

A polynomial $p\in\R[\bx]$ is said to be a \emph{sum of squares} (SOS) if $p=p_1^2+\dots+p_t^2$ for some $p_1,\dots,p_t \in \mathbb{R}[\bx]$. The set of all SOS polynomials in $\R[\bx]$ is denoted by $\Sigma[\bx]$. For $k\in\N$, let $\Sigma[\bx]_{k} \coloneqq\Sigma[\bx] \cap  \mathbb{R}[\bx]_{k}$.
For a polynomial tuple $g=(g_1,\dots,g_{m})$, the \emph{quadratic module} generated by $g$ is defined by
\be
\qmod{g} \coloneqq \Sigma[\bx]+ g_1 \cdot \Sigma[\bx]+\cdots+ g_{m} \cdot \Sigma[\bx].
\ee
For $k\in\N$, the $k$th degree truncation of $\qmod{g}$ is
\be
\qmod{g}_{k} \coloneqq \Sigma[\bx]_{k}+ g_1 \cdot \Sigma[\bx]_{k- \lceil \deg(g_1)/2 \rceil}+\cdots+ g_{m} \cdot \Sigma[\bx]_{k-\lceil \deg(g_{m}) /2\rceil}.
\ee
Similarly, if $g \in \mathbb{R}[\bx^{\prime}]^m$ for $\bx^{\prime} \subseteq \bx$, its quadratic module generated by $g$ in $\mathbb{R}[\bx^{\prime}]$ and $k$th degree truncation are denoted as
\be \nn
\qmod{g,\bx^{\prime}} \coloneqq \Sigma[\bx^{\prime}]+ g_1 \cdot \Sigma[\bx^{\prime}]+\cdots+ g_{m} \cdot \Sigma[\bx^{\prime}],
\ee
\[
\qmod{g,\bx^{\prime}}_{k} \coloneqq \Sigma[\bx^{\prime}]_{k}+ g_1 \cdot \Sigma[\bx^{\prime}]_{k-  \lceil \deg(g_1)/2 \rceil}+\cdots+ g_{m} \cdot \Sigma[\bx^{\prime}]_{k-\lceil \deg(g_{m}) /2\rceil}.
\]
The set $\ideal{h}+\qmod{g}$ is said to be \emph{Archimedean} if there exists $R>0$ such that $R-\|\bx\|^2\in\ideal{h}+\qmod{g}$. Clearly, if $p\in \ideal{h}+\qmod{g}$, then $p\ge0$ on the semialgebraic set $S\coloneqq\left\{\bx\in\mathbb{R}^{n}\mid h(\bx)=\mathbf{0}, g(\bx)\geq\mathbf{0}\right\}$ while the converse is not always true. However, if $p$ is positive on $S$ and $\ideal{h}+\qmod{g}$ is Archimedean, we have $p \in \ideal{h}+\qmod{g}$.
This conclusion is referred to as Putinar's Positivstellensatz \cite{putinar1993positive}.
	
For $k\in\N$, let $\re^{\N^n_{2k}}$ be the set of all real vectors that are indexed by $\N^n_{2k}$.
Given $\by\in\re^{\N^n_{2k}}$, define the following Riesz linear functional:
\be\label{<f,y>}
\langle p, \by\rangle\coloneqq\sum_{|\ba|\le 2k} p_{\ba} y_{\ba}, \quad\forall p=\sum_{|\ba|\le 2k} p_{\ba}\bx^{\ba}\in\re[\bx]_{2k}.
\ee
For a polynomial $p\in\re[\bx]$ and $\by\in\re^{\N^n_{2k+\deg(p)}}$, the $k$th \emph{localizing matrix} $M_k[p\by]$ associated with $p$ is the symmetric matrix indexed by $\N^n_{k}$ such that
\be \label{df:Lf[y]}
q^{\intercal} \Big( M_k[p\by] \Big) q =
\left\langle p (q^{\intercal}[\bx]_{k})^2, \by \right\rangle
\ee
for all $q \in \re^{\N^n_{k}}$.
In particular, if $p=1$, then $M_k[\by]$ is called the $k$th \emph{moment matrix}.
For $\bx^{\prime}\subseteq\bx$ and $p\in\mathbb{R}[\bx^{\prime}]$, let $M_k[p\by, \bx^{\prime}]$ be the localizing submatrix obtained by retaining only those rows and columns of $M_k[p\by]$ indexed by $\ba\in \mathbb{N}^n$ with $\bx^{\ba}\in\mathbb{R}[\bx^{\prime}]$.

\subsection{The homogenized Moment-SOS hierarchy}
When the feasible set $K$ is unbounded,
the standard Moment-SOS hierarchy 
typically fails to  have convergence. In this section, we present the homogenization approach introduced  \cite{huang2023homogenization} for solving polynomial optimization with unbounded sets.


Let $\tilde{\bx}=(x_0,x)\in \mR^{n+1}$. For the feasible set $K$ given in $\eqref{1.1}$, define the  homogenized set 
\be \label{sets:KKK}
\widetilde{K}\coloneqq\left\{\tilde{\bx}\in\mathbb{R}^{n+1}
\left|\baray{l}
\tilde{g}_{j}(\tilde{\bx}) \geq 0,\quad j\in [m],\\
x_0\geq 0,\quad\|\tilde{\bx}\|^2=1.
\earay \right. \right\}
\ee
The set $K$ is said to be \emph{closed at infinity} if
\[
\widetilde{K}=\cl{\widetilde{K}\cap \{\tilde{\bx}\in\mathbb{R}^{n+1}\mid x_0>0\}},
\]
where $\cl{\cdot}$ is the closure operator.
A basic property for closedness at infinity is that $f-\gamma \ge 0$ on $K$ if and only if
$\tilde{f}(\tilde{\bx}) - \gamma x_0^d \ge 0$ on $\widetilde{K}$ with $d\coloneqq\deg(f)$ \cite{huang2023homogenization}.
Therefore, when $K$ is closed at infinity,  \reff{1.1} is equivalent to the following homogenized optimization problem:
\begin{equation}\label{hpop}
\left\{\baray{lll}
\sup&\gamma\\
\st&\tilde{f}(\tilde{\bx}) - \gamma x_0^d \geq 0\text{ on }\widetilde{K}.
\earay \right.
\end{equation}
By applying the standard Moment-SOS relaxations to solve the homogenized reformulation \reff{hpop}, a homogenized Moment-SOS hierarchy was proposed in
\cite{huang2023gmp,huang2023homogenization} to solve $\eqref{1.1}$ with unbounded  sets. Asymptotic and finite convergences were proved under some generic assumptions.

\subsection{Polynomial optimization with correlative sparsity}\label{pocs}
The Moment-SOS hierarchy with correlative sparsity was first studied in \cite{waki2006sums}.
Suppose that the subsets of variables $\bx(1)$, $\dots$, $\bx(p) \subseteq \bx$ satisfy $\cup_{\ell=1}^p\bx(\ell)=\bx$. The POP \reff{1.1} is said to have a \emph{correlative sparsity pattern} (csp) $(\bx(1),\dots, \bx(p))$ if 
\bit
\item [(i)] The objective function $f\in\mathbb{R}[\bx]$ can be written as 
$$
f=\sum_{\ell=1}^p f_\ell \text{ with } f_\ell \in \mathbb{R}[\bx(\ell)] \text{ for } \ell\in[p];
$$
\item [(ii)] There exists a partition $\{J_1,\dots, J_p\}$ 
of $[m]$ such that for every $\ell\in [p]$ and every $j \in J_\ell$, we have $g_j \in \mathbb{R}[\bx(\ell)]$.
\eit

Let
$$d_{\min}\coloneqq\max\{\lceil\deg(f)/2\rceil,\lceil\deg(g_{1})/2\rceil,\ldots,\lceil\deg(g_{m})/2\rceil\}.$$
Given $k\ge d_{\min}$, the $k$th order SOS relaxation with correlative sparsity for \reff{1.1} is 
\begin{equation} \label{spasos}  
\left\{ \begin{array}{lll}
\sup &   \gamma \\
\st &  f - \gamma \in \qmod{(g_j)_{j\in J_1},\bx(1)}_{2k}+\cdots+\qmod{(g_j)_{j\in J_p},\bx(p)}_{2k}.
\end{array} \right.
\end{equation}
The dual of \reff{spasos} is the $k$th order moment relaxation
\be \label{spamom}
\left\{\baray{ll}
\inf  &\langle f,\by \rangle  \\
\st  &y_{\mathbf{0}}=1, \quad M_k\left[\by, \bx(\ell)\right] \succeq 0, \quad\ell \in [p],\\
& M_{k-\lceil\deg(g_{j})/2\rceil}\left[g_j\by, \bx(\ell)\right]  \succeq 0,\quad j \in J_\ell,  \ell \in [p]. \\
\earay\right.
\ee

The csp $(\bx(1),\dots,\bx(p))$ is said to satisfy \emph{running intersection property} (RIP) if for every $\ell\in [p-1]$, there exists some $s\in [\ell]$ such that 
\begin{equation} \label{rip}
\bx(\ell+1) \cap \bigcup_{j=1}^\ell \bx(j) \subseteq \bx(s).
\end{equation}
Under the RIP, it was shown in 
\cite{grimm2007note,kojima2009note,lasserre2006convergent} that the sparse Moment-SOS hierarchy \reff{spasos}--\reff{spamom} has asymptotic convergence.
\begin{thm}[\cite{grimm2007note,kojima2009note,lasserre2006convergent}]
\label{spaput}
Suppose that  \reff{1.1} has the csp $(\bx(1),\dots,\bx(p))$ satisfying the RIP, and the quadratic module $\qmod{(g_j)_{j\in J_{\ell}},\bx(\ell)}$ is Archimedean for each $\ell \in [p]$. 
If $f>0$ on $K$, then
\[f \in \qmod{(g_j)_{j\in J_1},\bx(1)}+\cdots+\qmod{(g_j)_{j\in J_p},\bx(p)}.\]
\end{thm}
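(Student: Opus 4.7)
The plan is to argue by induction on the number $p$ of cliques. For the base case $p=1$ the csp reduces to $\bx(1)=\bx$ and $J_1=[m]$, so the claim is exactly the classical Putinar Positivstellensatz applied to the Archimedean quadratic module $\qmod{(g_j)_{j\in J_1},\bx(1)}$ and the strictly positive polynomial $f$.

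For the inductive step, assume the statement for $p-1$ cliques and consider $p$ cliques satisfying the RIP. By the RIP applied with $\ell=p-1$, there exists $s\in[p-1]$ with
\[
\bx(p)\cap\bigcup_{\ell=1}^{p-1}\bx(\ell)\subseteq\bx(s);
\]
that is, the last clique meets the earlier ones only through variables of a single prior clique. Set $\bx'\coloneqq\bigcup_{\ell<p}\bx(\ell)$ and split $f=f'+f_p$, where $f'\coloneqq\sum_{\ell<p}f_\ell\in\R[\bx']$ and $f_p\in\R[\bx(p)]$. Correspondingly, decompose $K=K'\cap K''$ where $K'$ and $K''$ collect the constraints indexed by $\bigcup_{\ell<p}J_\ell$ and $J_p$, respectively. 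The central technical step is to construct a polynomial $\phi\in\R[\bx(s)]$, depending only on the shared variables, such that
\[
f'+\phi>0\text{ on }K',\qquad f_p-\phi>0\text{ on }K''.
\]
Once such a $\phi$ is in hand, the inductive hypothesis applied to $f'+\phi\in\R[\bx']$ (whose csp $(\bx(1),\dots,\bx(p-1))$ inherits both the RIP and the Archimedeanness) yields $f'+\phi\in\sum_{\ell<p}\qmod{(g_j)_{j\in J_\ell},\bx(\ell)}$, and classical Putinar applied to $f_p-\phi\in\R[\bx(p)]$ gives $f_p-\phi\in\qmod{(g_j)_{j\in J_p},\bx(p)}$. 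Summing reproduces the claimed sparse representation of $f$.

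The main obstacle is the construction of $\phi$. The idea is to exploit the Archimedeanness of $\qmod{(g_j)_{j\in J_p},\bx(p)}$, which forces $K''$ to be compact in the $\bx(p)$-space. Consequently the partial value function
\[
\psi(\bx(s))\coloneqq\min\{f_p(\bx(p)):\,g_j(\bx)\ge 0,\,j\in J_p\}
\]
is well-defined and lower semicontinuous on the projection of $K''$ onto $\bx(s)$. Strict positivity of $f$ on $K$ translates into $f'(\bx')+\psi(\bx(s))>0$ on $K'$; compactness of $K'$ (again via Archimedeanness) then produces a uniform gap $f'+\psi\ge\varepsilon>0$. Approximating $\psi$ from below by a polynomial $\phi\in\R[\bx(s)]$ on a sufficiently large compact set, with approximation error less than $\varepsilon$, supplies the required $\phi$. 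The delicate point is that $\phi$ must live in $\R[\bx(s)]$ — which is exactly what the RIP guarantees, since without it $\psi$ would in general depend on variables lying in several incomparable cliques — and the Weierstrass approximation must be uniform on compact supersets of the relevant projections so that both strict inequalities hold simultaneously; this is where Archimedeanness of \emph{both} quadratic modules becomes essential.
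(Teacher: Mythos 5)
The paper does not prove Theorem~\ref{spaput}: it is imported verbatim from \cite{grimm2007note,kojima2009note,lasserre2006convergent}, so the comparison is against the standard proofs there, most closely the induction of Grimm--Netzer--Schweighofer. Your skeleton matches that proof: induction on $p$, base case Putinar, the RIP to confine the overlap of $\bx(p)$ with the earlier cliques to a single $\bx(s)$, and a transfer polynomial $\phi$ in the shared variables that splits $f$ into two pieces, one handled by the inductive hypothesis and one by dense Putinar. That is the right strategy, and the role you assign to the RIP and to Archimedeanness (compactness of $K'$ and of the fibers of $K''$) is correct.

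The genuine gap is the step that produces $\phi$. The partial value function $\psi$ is, as you note, only lower semicontinuous, and a merely l.s.c.\ function on a compact set \emph{cannot} be approximated uniformly to within a prescribed $\varepsilon$ by polynomials: uniform limits of continuous functions are continuous, and $\psi$ is genuinely discontinuous already for simple fibered sets. So ``approximate $\psi$ from below with error less than $\varepsilon$'' is not an available operation, and it is exactly the point your argument leans on. The standard repair is an insertion argument: show that $u(y)\coloneqq\sup\{-f'(x'):x'\in K',\,\pi(x')=y\}$ is upper semicontinuous, that $\psi$ is lower semicontinuous, that $u<\psi$ on the common projection (positivity of $f$ on $K$ plus attainment on compact fibers), extend $u$ by $-\infty$ off the closed set $\pi(K')$ and $\psi$ by $+\infty$ off $\pi(K'')$, insert a \emph{continuous} $h$ with $u<h<\psi$ (Dowker/Kat\v{e}tov--Tong type insertion on a compact metric space), and only then approximate $h$ uniformly by a polynomial within the positive gaps $\inf(h-u)$ and $\inf(\psi-h)$. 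This simultaneously fixes two further points you gloss over: (i) the inequality $f'+\psi>0$ is only available on the part of $K'$ whose shared coordinates lie in $\pi(K'')$, while the inductive hypothesis needs $f'+\phi>0$ on \emph{all} of $K'$ (hence the $\pm\infty$ extensions); and (ii) taking $\phi\le\psi$ only yields $f_p-\phi\ge 0$, not the strict positivity Putinar requires, whereas strict insertion gives $\phi\le\psi-\delta$. Finally, $\phi$ must be taken in $\R[\bx(s)\cap\bx(p)]$ rather than in $\R[\bx(s)]$ as you write; otherwise $f_p-\phi\notin\R[\bx(p)]$ and the concluding application of Putinar takes place in the wrong ring. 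With these corrections your outline becomes the proof in \cite{grimm2007note}.
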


\begin{remark}
For a given POP, a csp $(\bx(1),\dots,\bx(p))$ satisfying the RIP can be obtained by: (1) building the csp graph, (2) generating a chordal extension,  (3) taking the list of maximal cliques; see \cite{magron2023sparse,waki2006sums}. Note that chordal extensions of a graph are typically not unique and so are correlative sparsity patterns.
\end{remark}

\section{The sparse homogenized Moment-SOS hierarchy with perturbations}\label{sc:shmg}
In this section, we give a hierarchy of sparse homogenized  Moment-SOS relaxations to solve polynomial optimization with unbounded sets.
Suppose the POP \reff{1.1} admits a correlative sparse pattern and  $K$ is unbounded. 
Note that we can not directly apply the sparse relaxations \reff{spasos}--\reff{spamom} to solve the homogenized reformation \eqref{hpop} since the spherical constraint $\|\tilde{\bx}\|^2=1$ destroys the csp of \reff{1.1}.
To overcome this difficulty, we give a sparse homogenized reformulation for \reff{1.1} by introducing a tuple of auxiliary variables.

Let $(\bx(1),\dots,\bx(p))$ be a csp of \reff{1.1} and $\{J_1,\dots,J_p\}$ a partition of $[m]$ such that for every $\ell\in [p]$ and every $j \in J_\ell$, $g_j \in \R[\bx(\ell)]$.
Define the sparse set
\be \label{spK}
\widetilde{K}_s^{1}\coloneqq\left\{(\tilde{\bx},\w)\in\mathbb{R}^{n+1+p}
\left| \baray{l}
x_{0} \geq 0,\quad\tilde{g}_{j}(\tilde{\bx}) \geq 0,\quad j\in [m], \\
\|\tilde{\bx}(\ell)\|^2+w_{\ell}^2=1,\quad\ell \in [p],\\
\earay \right. \right\}
\ee
where $\tilde{\bx}(\ell)\coloneqq(x_{0},\bx(\ell))$ and $\w\coloneqq(w_1,\ldots,w_p)$ is a tuple of auxiliary variables.
The difference between $\widetilde{K}_s^{1}$ and $\widetilde{K}$ is that we replace the single non-sparse spherical constraint $\|\tilde{\bx}\|^2=1$ by multiple spherical constraints $\|\tilde{\bx}(\ell)\|^2+w_{\ell}^2=1$, $\ell\in [p]$. Interestingly, $\widetilde{K}_s^{1}$ retains the correlative sparse pattern of the feasible set K.

Let $d\coloneqq\deg(f)$, $d_0\coloneqq2\lceil \frac{d}{2} \rceil$. Consider the sparse homogenized reformulation for \reff{1.1} with  perturbations:
\begin{equation}\label{hommax} 
\left\{\begin{array}{rl}
\sup&\gamma\\
\st&\tilde{f}(\tilde{\bx})+\epsilon 
\cdot \left(\sum\limits_{i=0}^n x_{i}^{d_0}+\sum\limits_{\ell=1}^p w_{\ell}^{d_0}\right)-\gamma x_0^d\geq0, ~\forall~(\tilde{\bx},\w)\in \widetilde{K}_s^{1},
\end{array}\right.
\end{equation}
where $\epsilon\geq 0$ is a tunable parameter. Let $f^{(\epsilon)}$ be the optimal value of \reff{hommax} and
\[ h_\ell\coloneqq\|\tilde{\bx}(\ell)\|^2+w_{\ell}^2-1,\quad\ell\in [p].
\]

For a relaxation order $k\ge d_{\min}$, the $k$th sparse homogenized SOS relaxation for \reff{hommax} is
\begin{equation}\label{spsos}
\left\{ \begin{array}{lll}
\sup&\gamma\\
\st&\tilde{f}(\tilde{\bx}) +\epsilon \cdot 
\left(\sum\limits_{i=0}^n x_{i}^{d_0}+\sum\limits_{\ell=1}^p w_{\ell}^{d_0}\right) - \gamma x_0^d \\
&\quad\quad\in\sum\limits_{\ell=1}^p \big(\ideal{h_\ell,\tilde{\bx}(\ell)\cup\{w_{\ell}\}}_{2k}+\qmod{\{x_0\}\cup\{\tilde{g}_j\}_{j\in J_\ell},\tilde{\bx}(\ell)\cup\{w_{\ell}\}}_{2k}\big).
\end{array}\right.
\end{equation}
The dual of \reff{spsos} is the $k$th sparse homogenized moment relaxation:
\be\label{spmom}
\left\{\begin{array}{lll}
\sup&\langle \tilde{f}(\tilde{\bx}) +\epsilon \cdot \left(\sum\limits_{i=0}^n x_{i}^{d_0}+\sum\limits_{\ell=1}^p w_{\ell}^{d_0}
\right), \by\rangle & \\
\st &\langle x_0^d,\by\rangle=1, \quad M_{k-1}[h_\ell \by, \tilde{\bx}(\ell)\cup\{w_{\ell}\}] =0, \quad\ell \in [p], \\
& M_{k-1}[x_0, \tilde{\bx}(\ell)\cup\{w_{\ell}\}] \succeq 0,\quad M_k[\by, \tilde{\bx}(\ell)\cup\{w_{\ell}\}] \succeq 0, \quad \ell \in [p],\\
& M_{k-\lceil\deg(g_{j})/2\rceil}[\tilde{g}_j\by, \tilde{\bx}(\ell)\cup\{w_{\ell}\}] \succeq 0, \quad j\in J_\ell, \ell\in [p].
\end{array}\right.
\ee
The hierarchy of relaxations \reff{spsos}--\reff{spmom} is called the sparse homogenized Moment-SOS hierarchy for solving \reff{1.1}.
Let $f_k$, $f_k^{\prime}$ denote the optima of \reff{spsos} and \reff{spmom}, respectively.

\subsection{Convergence analysis}\label{subse:conana}
When $K$ is closed at infinity, we establish the relationship between the optimal values of \reff{1.1} and \reff{hommax}. 
\begin{theorem}\label{optval:err}
Suppose that $K$ is closed at infinity and $\bx^*$ is a minimizer of \reff{1.1}. Let $f^{(\epsilon)}$ be the optimal value of \reff{hommax}.
For every $\epsilon>0$, the following holds
\[f_{\min}< f^{(\epsilon)}\leq f_{\min}+ \epsilon \cdot p \cdot \left(1+\|\bx^*\|^2\right)^{\frac{d}{2}}.
\]
Moreover, one has $f^{(\epsilon)}=f_{\min}$ when $\epsilon =0$.
\end{theorem}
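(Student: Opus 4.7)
The plan is to handle the theorem in three pieces: the upper bound, the non-strict lower bound $f^{(\epsilon)} \geq f_{\min}$ (which collapses to equality at $\epsilon = 0$), and the strict inequality for $\epsilon > 0$. For the upper bound I would build a specific feasible point of $\widetilde{K}_s^{1}$ by scaling: set $t \coloneqq 1/\sqrt{1+\|\bx^*\|^2}$, $\tilde{\bx} \coloneqq t(1,\bx^*)$, and $w_\ell \coloneqq \sqrt{1-t^2(1+\|\bx^*(\ell)\|^2)}$ for each $\ell \in [p]$. The inequality $\|\bx^*(\ell)\|^2 \leq \|\bx^*\|^2$ makes each $w_\ell$ real, and $x_0 \geq 0$, $\tilde{g}_j(\tilde{\bx}) = t^{\deg(g_j)}g_j(\bx^*) \geq 0$, $\|\tilde{\bx}(\ell)\|^2 + w_\ell^2 = 1$ all hold by construction. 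At this point $\tilde{f}(\tilde{\bx})/x_0^d = f(\bx^*) = f_{\min}$. The spherical constraints force $|x_i|, |w_\ell| \leq 1$, so with $d_0 \geq 2$ even one has $x_i^{d_0}\leq x_i^2$ and $w_\ell^{d_0}\leq w_\ell^2$, and covering each $x_i$ by some clique yields
\[
\sum_{i=0}^n x_i^{d_0} + \sum_{\ell=1}^p w_\ell^{d_0} \leq x_0^2 + \sum_{\ell=1}^p \bigl(\|\bx(\ell)\|^2+w_\ell^2\bigr) = p - (p-1)x_0^2 \leq p.
\]
Dividing the value at this lifted point by $x_0^d = (1+\|\bx^*\|^2)^{-d/2}$ yields $f^{(\epsilon)} \leq f_{\min} + \epsilon p (1+\|\bx^*\|^2)^{d/2}$.

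For the non-strict lower bound $f^{(\epsilon)} \geq f_{\min}$, the perturbation is a sum of even powers and is thus nonnegative, so it suffices to show $\tilde{f}(\tilde{\bx}) - f_{\min} x_0^d \geq 0$ on $\widetilde{K}_s^{1}$. On the portion with $x_0 > 0$, the point $\bx/x_0$ is feasible for \reff{1.1}, giving $\tilde{f}/x_0^d = f(\bx/x_0) \geq f_{\min}$. The harder case, which I expect to be the main obstacle, is the $x_0 = 0$ slice, since $\widetilde{K}_s^{1}$ and $\widetilde{K}$ have differently shaped $x_0=0$ slices and closedness at infinity is not directly applicable. I would bridge this via homogeneity: closedness at infinity combined with $f - f_{\min} \geq 0$ on $K$ gives $\tilde{f} - f_{\min} x_0^d \geq 0$ on $\widetilde{K}$, and in particular $f^{(\infty)}(\bx') \geq 0$ whenever $\|\bx'\|=1$ and $g_j^{(\infty)}(\bx') \geq 0$; normalizing any nonzero $\bx$ from the $x_0=0$ slice of $\widetilde{K}_s^{1}$ to $\bx/\|\bx\|$ and using degree-$d$ homogeneity of $f^{(\infty)}$ transfers the nonnegativity (the case $\bx = 0$ is trivial). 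Combined with the upper bound, this yields $f^{(0)} = f_{\min}$.

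Finally, for the strict inequality when $\epsilon > 0$, I would exploit that $\widetilde{K}_s^{1}$ is compact (each spherical constraint confines all variables to $[-1,1]$) and that the perturbation is strictly positive on it: since $\sum_\ell(\|\tilde{\bx}(\ell)\|^2 + w_\ell^2) = p > 0$, at least one coordinate of $(\tilde{\bx},\w)$ must be nonzero, and every term in the perturbation is an even power. Hence $G(\tilde{\bx},\w) \coloneqq \tilde{f}(\tilde{\bx}) + \epsilon \bigl(\sum_{i=0}^n x_i^{d_0} + \sum_{\ell=1}^p w_\ell^{d_0}\bigr) - f_{\min} x_0^d$ is continuous and strictly positive on the compact $\widetilde{K}_s^{1}$, so $G \geq c$ for some constant $c > 0$. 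Because $x_0^d \leq 1$ there, we get $G - c x_0^d \geq G - c \geq 0$ on $\widetilde{K}_s^{1}$, so $\gamma = f_{\min}+c$ is feasible for \reff{hommax}, giving $f^{(\epsilon)} \geq f_{\min}+c > f_{\min}$.
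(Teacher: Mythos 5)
Your proposal is correct and follows essentially the same route as the paper: the same lifted feasible point $(\tilde{\bx}^*,\w^*)$ with the bound $\sum_i (x_i^*)^{d_0}+\sum_\ell (w_\ell^*)^{d_0}\le p$ for the upper estimate, and closedness at infinity plus positivity of the perturbation off the origin for the lower estimate. In fact you supply two details the paper leaves implicit — the homogeneity/normalization argument handling the $x_0=0$ slice of $\widetilde{K}_s^{1}$, and the compactness-plus-$x_0^d\le 1$ step turning strict positivity into $f^{(\epsilon)}>f_{\min}$ — both of which are exactly the right way to fill those gaps.
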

\begin{proof}
Since $K$ is closed at infinity and $f-f_{\min}\geq0 $ on $K$, we have $\tilde{f}-f_{\min}x_0^d\geq 0$ on $\widetilde{K}$. Note that $\mathbf{0}\notin \widetilde{K}_s^{1}$. Hence, for every $(\tilde{\bx},\w)\in \widetilde{K}_s^{1}$, we have
\[
\tilde{f}(\tilde{\bx})- f_{\min} x_0^d +\epsilon \cdot \left(\sum\limits_{i=0}^n x_{i}^{d_0}+\sum\limits_{\ell=1}^p w_{\ell}^{d_0}\right)>0,
\] 
which implies $f_{\min}< f^{(\epsilon)}$. On the other hand, let
\[\tilde{\bx}^*\coloneqq\frac{(1,\bx^*)}{\sqrt{1+\|\bx^*\|^2}},~ w_{\ell}^*\coloneqq\sqrt{1-\|\tilde{\bx}^*(\ell)\|^2}~(\ell \in [p]).
\]
Then it holds
\[\tilde{g}_j(\tilde{\bx}^*)=g_j(\bx^*)/(\sqrt{1+\|\bx^*\|^2})^{\deg(g_j)}\geq  0,\quad j\in [m],\]	
and so $(\tilde{\bx}^*,\w^*)\in \widetilde{K}_s^{1}$. If $\gamma$ is a feasible point of \eqref{hommax}, we have (noting $d_0\geq 2$)
\begin{equation*}
\gamma\cdot (\tilde{x}^*_0)^d\le\tilde{f}(\tilde{\bx}^*)+\epsilon \cdot\left(\sum\limits_{i=0}^n (\tilde{x}^*_{i})^{d_0}+\sum\limits_{\ell=1}^p (w^*_{\ell})^{d_0} \right)\le (\tilde{x}^*_0)^df_{\min}+\epsilon \cdot p.
\end{equation*}
It follows $f^{(\epsilon)}\le f_{\min}+\epsilon  \cdot p/(\tilde{x}^*_0)^d=f_{\min}+ \epsilon \cdot p \cdot\left(1+\|\bx^*\|^2\right)^{\frac{d}{2}}$.
When $\epsilon=0$, the above implies that $\gamma \leq f_{\min}$ for every feasible point $\gamma$ of \eqref{hommax}. Thus, we know that $f^{(0)}=f_{\min}$.
\end{proof}

\begin{remark}
Auxiliary variables $\w$ are necessary for Theorem \ref{optval:err} to be true. For instance, let $\bx(1)=\{x_1,x_2\}$, $\bx(2)=\{x_2,x_3\}$. Consider the unconstrained optimization problem with $f=x_1^2x_2^4-x_2^4x_3^2$. Clearly, we have $f_{\min}=-\infty$. In this case, the sparse homogenized reformulation \reff{hommax} without auxiliary variables $\w$ reads as 
\begin{equation} \label{locl:per}
\left\{\begin{array}{rl}
\sup& \gamma \\
\st &  x_1^2x_2^4-x_2^4x_3^2 +\epsilon \cdot(x_0^4+x_1^4+x_2^4) - \gamma x_0^4 \geq 0, ~\forall\bx\in \widetilde{K}_s^{1},
\end{array} \right.
\end{equation}
where 
\[
\widetilde{K}_s^{1}=\{(x_0,x_1,x_2,x_3)\in \mR^4:x_0^2+x_1^2+x_2^2=1,x_0^2+x_2^2+x_3^2=1\}.
\]
For arbitrary $\epsilon \geq 0$, the optimal value of \reff{locl:per} is nonnegative. So Theorem \ref{optval:err} fails.
\end{remark}

The next lemma shows that \reff{hommax} indeed inherits the csp of the original problem \reff{1.1}.
\begin{lemma}\label{keepcsp}
Suppose that \reff{1.1} admits a csp $(\bx(1),\dots,\bx(p))$. Then 
\reff{hommax} admits the csp $(\tilde{\bx}(1)\cup\{w_1\},\dots,\tilde{\bx}(p)\cup\{w_p\})$. Furthermore, if $(\bx(1),\dots,\bx(p))$ satisfies the RIP, so does $(\tilde{\bx}(1)\cup\{w_1\},\dots,\tilde{\bx}(p)\cup\{w_p\})$.
\end{lemma}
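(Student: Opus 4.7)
The plan is to verify the two conditions of the csp definition directly for the homogenized problem, then check the RIP set-theoretically. The key design feature to exploit is that $x_0$ is shared by every new clique, while each $w_\ell$ lives in exactly one new clique; this is precisely what makes both claims go through.

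\textbf{Step 1 (csp decomposition of the objective).} First I would write $f=\sum_{\ell=1}^p f_\ell$ with $f_\ell\in\mathbb R[\bx(\ell)]$. Homogenizing yields $\tilde f(\tilde\bx)=\sum_{\ell=1}^p x_0^{d-\deg(f_\ell)}\tilde f_\ell(\tilde\bx(\ell))$, and each summand lies in $\mathbb R[\tilde\bx(\ell)]\subseteq\mathbb R[\tilde\bx(\ell)\cup\{w_\ell\}]$. For the perturbation, the term $w_\ell^{d_0}$ is already in $\mathbb R[\tilde\bx(\ell)\cup\{w_\ell\}]$, while for each $i\in[n]$, since $\bigcup_\ell\bx(\ell)=\bx$, I can pick some $\ell_i$ with $x_i\in\bx(\ell_i)$ and assign $x_i^{d_0}$ to that clique. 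Both $x_0^{d_0}$ and the term $-\gamma x_0^d$ can be assigned to clique $1$ (or to any single clique). Combining, the total polynomial $\tilde f(\tilde\bx)+\epsilon(\sum_{i=0}^n x_i^{d_0}+\sum_{\ell=1}^p w_\ell^{d_0})-\gamma x_0^d$ decomposes as a sum of polynomials each belonging to some $\mathbb R[\tilde\bx(\ell)\cup\{w_\ell\}]$.

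\textbf{Step 2 (csp decomposition of the constraints).} Keep the partition $\{J_1,\dots,J_p\}$ of $[m]$ from the original csp, and augment clique $\ell$ by the equality constraint $h_\ell=\|\tilde\bx(\ell)\|^2+w_\ell^2-1$ (which indeed lies in $\mathbb R[\tilde\bx(\ell)\cup\{w_\ell\}]$), and assign the single sign constraint $x_0\ge 0$ to clique $1$. For $j\in J_\ell$, $g_j\in\mathbb R[\bx(\ell)]$ so $\tilde g_j\in\mathbb R[\tilde\bx(\ell)]\subseteq\mathbb R[\tilde\bx(\ell)\cup\{w_\ell\}]$. This verifies condition (ii).

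\textbf{Step 3 (RIP).} Fix $\ell\in[p-1]$ and compute
\[
\bigl(\tilde\bx(\ell{+}1)\cup\{w_{\ell+1}\}\bigr)\cap\bigcup_{j=1}^\ell\bigl(\tilde\bx(j)\cup\{w_j\}\bigr)
=\{x_0\}\cup\Bigl(\bx(\ell{+}1)\cap\bigcup_{j=1}^\ell\bx(j)\Bigr),
\]
because $w_{\ell+1}\notin\{w_1,\dots,w_\ell\}$, and the $w_j$'s are disjoint from the $x_i$'s (and from $x_0$). By the RIP for the original csp, there exists $s\in[\ell]$ with $\bx(\ell{+}1)\cap\bigcup_{j=1}^\ell\bx(j)\subseteq\bx(s)$. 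Since $x_0\in\tilde\bx(s)$, the above intersection is contained in $\tilde\bx(s)\subseteq\tilde\bx(s)\cup\{w_s\}$, which is exactly the RIP for the new csp.

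\textbf{Expected main difficulty.} This is essentially bookkeeping rather than a deep argument; the only substantive point is recognizing \emph{why} introducing a separate auxiliary variable $w_\ell$ per clique (rather than one global auxiliary, or none) is what preserves both the membership condition and the RIP. The subtlety worth stating explicitly is that $x_0$ is a common variable across all cliques, so it does not create any new intersections beyond $\{x_0\}$ itself, while the per-clique $w_\ell$ contributes nothing to the intersection — both of which are forced by the definition of $\widetilde K_s^{1}$.
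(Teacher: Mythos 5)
Your proposal is correct and follows essentially the same route as the paper's own proof: verify that each homogenized summand, constraint $h_\ell$, and $\tilde g_j$ lies in $\mathbb{R}[\tilde{\bx}(\ell)\cup\{w_\ell\}]$, then compute the clique intersection as $\{x_0\}$ union the original intersection and invoke the original RIP. Your treatment is in fact slightly more careful than the paper's on two minor bookkeeping points (the compensating powers $x_0^{d-\deg(f_\ell)}$ needed when homogenizing a sum of different-degree pieces, and the explicit assignment of the perturbation monomials to cliques), but the argument is the same.
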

\begin{proof}
 By the assumption, we know that $f=f_1+\cdots+f_p\in\mR[\bx]$ with $f_{\ell}\in\mR[\bx(\ell)] $ $(\ell\in [p])$. For $\ell=1,\dots,p$, we have that 
 \[
 \tilde{f}_{\ell}\in \mathbb{R}[\tilde{\bx}(\ell)\cup\{w_{\ell}\}], ~h_{\ell}\in \mathbb{R}[\tilde{\bx}(\ell)\cup\{w_{\ell}\}], ~\tilde{g}_j \in \mathbb{R}[\tilde{\bx}(\ell)\cup\{w_{\ell}\}]~(j \in J_\ell).
 \]
 Thus,
\reff{hommax} admits the csp $(\tilde{\bx}(1)\cup\{w_1\},\dots,\tilde{\bx}(p)\cup\{w_p\})$. 
 Then the conclusion follows.
 For every $\ell\in [p-1]$, it holds that
 \begin{equation} 
 \baray{l}
(\tilde{\bx}(\ell+1)\cup\{w_{\ell+1}\})\cap (\bigcup_{j=1}^\ell	\tilde{\bx}(j)\cup\{w_{j}\})\\
\quad \quad \quad \quad \quad \quad \quad=(x(\ell+1)\cup \{x_0,w_{\ell+1} \})\cap \bigcup_{j=1}^\ell(x(j)\cup \{x_0,w_{j} \}),\\
 \quad \quad \quad \quad \quad \quad \quad=\{x_0\}\cup (I_{\ell+1}\cap \bigcup_{j=1}^\ell I_{j}). \\
 \earay
 \end{equation}	\nonumber
 Hence, the conclusion follows.
\end{proof}

In the following, we prove that  the sparse homogenized Moment-SOS hierarchy \reff{spsos}-\reff{spmom} has asymptotic convergence to a  neighbourhood of $f_{\min}$.
\begin{thm}
Suppose that $K$ is closed at infinity and POP \reff{1.1} admits a csp $(\bx(1),\dots,\bx(p))$ satisfying the RIP. Then for every $\epsilon>0$, we have $f_k\rightarrow f^{(\epsilon)}$ as $k\rightarrow \infty$.
\end{thm}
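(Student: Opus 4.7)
The plan is to establish $f_k \leq f^{(\epsilon)}$ for every $k$ and $\liminf_{k\to\infty}f_k \geq f^{(\epsilon)}$. The first inequality is immediate: if $\gamma$ is feasible for \reff{spsos}, then
\[
P_\gamma \coloneqq \tilde{f}(\tilde{\bx}) + \epsilon\bigl(\textstyle\sum_{i=0}^n x_i^{d_0} + \sum_{\ell=1}^p w_\ell^{d_0}\bigr) - \gamma x_0^d
\]
lies in the $2k$-truncated sparse module and is therefore nonnegative on $\widetilde{K}_s^{1}$, so $\gamma$ is feasible for \reff{hommax}. The heart of the argument is the matching lower bound, which I would obtain by applying the sparse Putinar Positivstellensatz (Theorem \ref{spaput}) to $P_\gamma$ for each $\gamma<f^{(\epsilon)}$.

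To invoke Theorem \ref{spaput}, I need the RIP of the csp $(\tilde{\bx}(\ell)\cup\{w_\ell\})_{\ell=1}^p$ and the Archimedean property of each local quadratic module. The RIP is supplied by Lemma \ref{keepcsp}. For Archimedeanness, $h_\ell = \|\tilde{\bx}(\ell)\|^2 + w_\ell^2 - 1$ lies in the corresponding ideal, so $1-\|\tilde{\bx}(\ell)\cup\{w_\ell\}\|^2=-h_\ell$ is already in the module. Assuming $P_\gamma>0$ on the compact set $\widetilde{K}_s^{1}$, Theorem \ref{spaput} (in the standard form that allows the equality constraints $h_\ell=0$) produces a sparse decomposition of $P_\gamma$ in $\sum_{\ell=1}^p\bigl(\ideal{h_\ell,\tilde{\bx}(\ell)\cup\{w_\ell\}}+\qmod{\{x_0\}\cup\{\tilde{g}_j\}_{j\in J_\ell},\tilde{\bx}(\ell)\cup\{w_\ell\}}\bigr)$; for sufficiently large $k$ this decomposition fits inside the $2k$-truncated module appearing in \reff{spsos}, making $\gamma$ feasible and yielding $f_k\geq \gamma$. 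Sending $\gamma\uparrow f^{(\epsilon)}$ then delivers the lower bound.

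The main obstacle is the strict positivity $P_\gamma>0$ on $\widetilde{K}_s^{1}$, which splits by whether $x_0>0$ or $x_0=0$. Since the condition ``$P_\gamma\geq 0$ on $\widetilde{K}_s^{1}$'' is closed in $\gamma$, the supremum $f^{(\epsilon)}$ is attained, so $P_{f^{(\epsilon)}}\geq 0$ on $\widetilde{K}_s^{1}$. Then $P_\gamma=P_{f^{(\epsilon)}}+(f^{(\epsilon)}-\gamma)x_0^d\geq (f^{(\epsilon)}-\gamma)x_0^d>0$ whenever $x_0>0$. For $x_0=0$, I would exploit the closedness at infinity of $K$: any $(0,\bx,\w)\in\widetilde{K}_s^{1}$ satisfies $g_j^{(\infty)}(\bx)\geq 0$ for all $j\in[m]$, and when $\bx\neq 0$ the rescaled point $(0,\bx/\|\bx\|)$ lies in $\widetilde{K}$; closedness at infinity together with $f_{\min}>-\infty$ then forces $f^{(\infty)}(\bx/\|\bx\|)\geq 0$, whence $f^{(\infty)}(\bx)\geq 0$ by homogeneity (the case $\bx=0$ being trivial). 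Meanwhile, the sphere constraints $\|\tilde{\bx}(\ell)\|^2+w_\ell^2=1$ at $x_0=0$ prevent all of the $x_i$'s ($i\geq 1$) and $w_\ell$'s from vanishing simultaneously, so by compactness $\sum_{i=1}^n x_i^{d_0}+\sum_{\ell=1}^p w_\ell^{d_0}\geq c$ for some $c>0$ uniformly on $\widetilde{K}_s^{1}\cap\{x_0=0\}$. Combining these estimates yields $P_\gamma|_{x_0=0}\geq\epsilon c>0$, which together with the $x_0>0$ case establishes the required strict positivity and completes the proof.
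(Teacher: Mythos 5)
Your proposal is correct and follows essentially the same route as the paper: reduce to strict positivity of the perturbed objective on $\widetilde{K}_s^{1}$ via the case split $x_0>0$ versus $x_0=0$, get Archimedeanness from the sphere constraints $h_\ell=0$ and the RIP from Lemma \ref{keepcsp}, and conclude with the sparse Putinar Positivstellensatz (Theorem \ref{spaput}). You in fact supply two details the paper leaves implicit --- that the supremum $f^{(\epsilon)}$ is attained (so $P_{f^{(\epsilon)}}\geq 0$ on $\widetilde{K}_s^{1}$) and that $\tilde f(0,\bx)=f^{(\infty)}(\bx)\geq 0$ follows from closedness at infinity and $f_{\min}>-\infty$ --- which is a welcome tightening rather than a deviation.
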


\begin{proof}
For $\gamma<f^{(\epsilon)}$, we show that for each $(\tilde{\bx},\w)\in \widetilde{K}_s^{1}$, 
\be \label{eq:pos}
\tilde{f}(\tilde{\bx}) +\epsilon \cdot\left(\sum\limits_{i=0}^n x_{i}^{d_0}+\sum\limits_{\ell=1}^p w_{\ell}^{d_0}\right)-\gamma x_0^d>0.
\ee
If $x_0=0$, then 
$$\tilde{f}(\tilde{\bx})+\epsilon\cdot \left(\sum\limits_{i=0}^n x_{i}^{d_0}+\sum\limits_{\ell=1}^p w_{\ell}^{d_0}\right)-\gamma x_0^d\geq \epsilon \cdot\left(\sum\limits_{i=1}^n x_{i}^{d_0}+\sum\limits_{\ell=1}^p w_{\ell}^{d_0}\right)>0.$$
If $x_0\neq0$, we have
\begin{align*}
&\tilde{f}(\tilde{\bx})+\epsilon \cdot 
\left(\sum\limits_{i=0}^n x_{i}^{d_0}+\sum\limits_{\ell=1}^p w_{\ell}^{d_0}\right)-\gamma x_0^d\\
=\,&\tilde{f}(\tilde{\bx})+\epsilon \cdot\left(\sum\limits_{i=0}^n x_{i}^{d_0}+\sum\limits_{\ell=1}^p w_{\ell}^{d_0}\right)-f^{(\epsilon)} x_0^d+(f^{(\epsilon)}-\gamma)x_0^d>0.
\end{align*}
Thus, we have
$\tilde{f}(\tilde{\bx})+\epsilon \cdot\left(\sum\limits_{i=0}^n x_{i}^{d_0}+\sum\limits_{\ell=1}^p w_{\ell}^{d_0}\right)-\gamma x_0^d>0$ on $\widetilde{K}_s^{1}$ for any $\gamma<f^{(\epsilon)}$. The spherical constraint $h_\ell=0$ implies that $\ideal{h_\ell,\tilde{\bx}(\ell)\cup\{w_{\ell}\}}+ \qmod{(\tilde{g}_j)_{j\in J_\ell},\tilde{\bx}(\ell)\cup\{w_{\ell}\}}$ is Archimedean in $\mathbb{R}[\tilde{\bx}(\ell)\cup\{w_{\ell}\}]$ for each $\ell\in[p]$. By Lemma \ref{keepcsp}, POP \reff{hommax} admits the csp $(\tilde{\bx}(1)\cup\{w_1\},\dots,\tilde{\bx}(p)\cup\{w_p\})$ satisfying the RIP. It  follows from Theorem \ref{spaput} that
\begin{align*}
\tilde{f}(\tilde{\bx}) +\epsilon\cdot\left(\sum\limits_{i=0}^n x_{i}^{d_0}+\sum\limits_{\ell=1}^p w_{\ell}^{d_0}\right)-\gamma x_0^d \in &\sum\limits_{\ell=1}^p (\ideal{h_\ell,\tilde{\bx}(\ell)\cup\{w_{\ell}\}}\\
&+ \qmod{\{x_0\} \cup \{\tilde{g}_j\}_{j\in J_\ell},\tilde{\bx}(\ell)\cup\{w_{\ell}\}}).
\end{align*}
Thus we obtain $f_k\rightarrow f^{(\epsilon)}$ as $k\rightarrow \infty$.
\end{proof}

\begin{remark}\label{rema3.5}
If there is no perturbation, i.e., $\epsilon=0$, the hierarchy \reff{spsos}--\reff{spmom} still provides valid lower bounds to $f_{\min}$. However, the convergence to $f_{\min}$ may not happen.
For instance, let
\[
\bx(1)=\{x_1,x_2,x_3,x_4,x_5\},\quad\bx(2)=\{x_5,x_6,x_7\},
\]
and consider the unconstrained optimization problem $\inf_{\bx\in \mR^7} f(\bx)$ with  $f=f_1+f_2$, where 
\begin{align*}
f_1&=(x_4^2+x_5^2+1)\left(x_1^4 x_2^2+x_2^4 x_3^2+x_1^2 x_3^4-3 x_1^2 x_2^2 x_3^2\right)+x_3^8,\\
f_2&=x_5^2x_6^2x_7^2.
\end{align*}
We show that for arbitrary $\gamma <f_{\min}=0$, it holds that
\[
\tilde{f}-\gamma x_0^8 \notin \sum\limits_{\ell=1}^2 (\ideal{\|\tilde{\bx}(\ell)\|^2+w_{\ell}^2-1, \tilde{\bx}(\ell)\cup\{w_{\ell}\}}+\qmod{x_0,\tilde{\bx}(\ell)\cup\{w_{\ell}\}}).
\]
Suppose otherwise that there were $h_{\ell}\in \mR[\tilde{\bx}(\ell)\cup\{w_{\ell}\}]$, $\sigma_{\ell}\in \qmod{x_0,\tilde{\bx}(\ell)\cup\{w_{\ell}\}})$ such that 
\begin{equation}\label{ex:eq1}
\tilde{f}-\gamma x_0^8 = \sum\limits_{\ell=1}^2 (h_{\ell}\cdot (\|\tilde{\bx}(\ell)\|^2+w_{\ell}^2-1)+\sigma_{\ell}).    
\end{equation}
Substituting $(0,0,1,0,0,0)$ for $(x_0,x_5,x_6,x_7,w_1,w_2)$ in \eqref{ex:eq1},  we obtain
\begin{equation}\label{ex:eq2}
x_4^2(x_1^4 x_2^2+x_2^4 x_3^2+x_1^2 x_3^4-3 x_1^2 x_2^2 x_3^2)+x_3^8 =\sigma_0+h_0\cdot(x_1^2+\cdots+x_4^2-1)
\end{equation}
for some $\sigma_0\in \Sigma[x_1,x_2,x_3,x_4]$, $h_0\in \mathbb{R}[x_1,x_2,x_3,x_4]$.
However, the left-hand side of \eqref{ex:eq2} is the dehomogenized Delzell's polynomial and the above representation does not exist as shown in \cite{reznick2000some}.
\end{remark}

\subsection{Sparse Positivstellens\"atze with perturbations}\label{spapos}
In this subsection, we provide new sparse Positivstellens\"atze for nonnegative polynomials on unbounded semialgebraic sets, based on sparse homogenized reformulations. Our Positivstellens\"atze do not need any denominator, and thus are quite different from the sparse versions of Reznick’s Positivstellensatz and Putinar–Vasilescu’s Positivstellensatz given in \cite{mai2023sparse}.


First, we consider the homogeneous case with $K=\mR^n$.
\begin{theorem}\label{sparserez}
Let $f\in\mR[\bx]$ be a form of degree $d$. Suppose $f$ admits a csp $(\bx(1),\dots,\bx(p))$ satisfying the RIP.
\bit 
\item[(1)] If $f\geq 0$ on $\mR^n$, then for any $\epsilon>0$, there exist  $\sigma_{\ell}\in\Sigma[\bx(\ell)\cup\{w_{\ell}\}]$, $\tau_{\ell}\in\mR[\bx(\ell)\cup\{w_{\ell}\}]$, $\ell \in [p]$ such that
\begin{equation}
f+\epsilon  \cdot \left(\sum\limits_{i=1}^n x_{i}^{d}+\sum\limits_{\ell=1}^p w_{\ell}^{d}\right)=\sum_{\ell=1}^p\left(\sigma_{\ell}+\tau_{\ell}\left(\|\bx(\ell)\|^2+w_{\ell}^2-1\right)\right),
\end{equation}
	
\item[(2)] If $f$ is positive definite, then for any $\epsilon>0$, there exist  $\sigma_{\ell}\in\Sigma[\bx(\ell)\cup\{w_{\ell}\}]$, $\tau_{\ell}\in\mR[\bx(\ell)\cup\{w_{\ell}\}]$, $\ell\in[p]$ such that
\begin{equation}
f+\epsilon \cdot \sum\limits_{\ell=1}^p w_{\ell}^{d}=\sum_{\ell=1}^p\left(\sigma_{\ell}+\tau_{\ell}\cdot \left(\|\bx(\ell)\|^2+w_\ell^2-1\right)\right),
\end{equation}
\eit
\end{theorem}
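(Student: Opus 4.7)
The plan is to apply the sparse Putinar-type result (Theorem~\ref{spaput}) after reducing each assertion to strict positivity of a perturbed polynomial on the bounded sparse semialgebraic set
\[
S \coloneqq \left\{(\bx,\w)\in\R^{n+p} : \|\bx(\ell)\|^2+w_\ell^2=1,\ \ell\in[p]\right\}.
\]
First I would note that $d=\deg(f)$ must be even in both cases, since $f$ is a nontrivial form with $f(-\bx)=(-1)^d f(\bx)$ and the sign constraints on $f$ rule out odd $d$. Consequently each $x_i^d$ and each $w_\ell^d$ is an even power, hence nonnegative on $\R$.

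For part~(1), set $F_1\coloneqq f+\epsilon\bigl(\sum_{i=1}^n x_i^d+\sum_{\ell=1}^p w_\ell^d\bigr)$. Summing the $p$ defining equations of $S$ gives $\sum_\ell\|\bx(\ell)\|^2+\sum_\ell w_\ell^2=p>0$, so at every point of $S$ at least one coordinate is nonzero; since $f\ge 0$ and the perturbation is a strictly positive sum of even powers at such points, $F_1>0$ on $S$. Next I would verify that minimizing $F_1$ over $S$ admits the csp $(\bx(1)\cup\{w_1\},\dots,\bx(p)\cup\{w_p\})$ with the RIP. By hypothesis $f=\sum_\ell f_\ell$ with $f_\ell\in\R[\bx(\ell)]$; the term $\sum_i x_i^d$ is split by assigning each $x_i$ to one arbitrarily chosen clique containing it; $\sum_\ell w_\ell^d$ is already clique-local; and each constraint lies in $\R[\bx(\ell)\cup\{w_\ell\}]$. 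The RIP carries over from $(\bx(\ell))$ to $(\bx(\ell)\cup\{w_\ell\})$ because the new variables $w_\ell$ are private and do not alter intersections of cliques. Finally, $\Sigma[\bx(\ell)\cup\{w_\ell\}]+\ideal{\|\bx(\ell)\|^2+w_\ell^2-1,\bx(\ell)\cup\{w_\ell\}}$ is Archimedean, since $R-\|\bx(\ell)\|^2-w_\ell^2$ reduces modulo the ideal to the constant $R-1$, which is a square for $R\ge 1$. Modeling the equality $h_\ell=0$ as the pair $\pm h_\ell\ge 0$, Theorem~\ref{spaput} then produces $\sigma_\ell\in\Sigma[\bx(\ell)\cup\{w_\ell\}]$ and $\tau_\ell\in\R[\bx(\ell)\cup\{w_\ell\}]$ realising the desired identity.

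For part~(2), I would run the same argument with $F_2\coloneqq f+\epsilon\sum_\ell w_\ell^d$. The only step requiring a different verification is strict positivity on $S$: if $\bx\ne 0$ then $f(\bx)>0$ by positive definiteness and $\sum_\ell w_\ell^d\ge 0$, whereas if $\bx=0$ then each sphere constraint forces $w_\ell^2=1$, so $\sum_\ell w_\ell^d=p>0$. The csp, RIP, and Archimedean checks are identical to part~(1) and in fact simpler: the perturbation involves only the private variables $w_\ell$ and therefore respects the csp automatically, without any arbitrary assignment.

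The main obstacle that both parts address is that nonnegativity of $f$ on the unbounded set $\R^n$ cannot be certified by an Archimedean Positivstellensatz directly; the auxiliary variables $w_\ell$ together with the sparse spheres are introduced precisely to produce a bounded, sparsely-constrained set $S$ on which Theorem~\ref{spaput} applies, while the $\epsilon$-perturbation converts nonnegativity into the strict positivity that the sparse Positivstellensatz requires. Part~(2) economises the perturbation by exploiting positive definiteness to handle the degenerate direction $\bx=0$ using only the $w_\ell$-terms.
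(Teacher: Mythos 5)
Your proposal is correct and follows essentially the same route as the paper: restrict to the sparse product-of-spheres set $S$, use $\mathbf{0}\notin S$ (resp.\ positive definiteness) to get strict positivity of the perturbed polynomial, and invoke Theorem~\ref{spaput}. You simply make explicit several details the paper leaves implicit (evenness of $d$, the csp/RIP verification for the augmented cliques, the Archimedean check, and the encoding of the equality constraints), all of which are handled correctly.
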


\begin{proof}
Note that $f\geq 0$ on $\mR^n$ is equivalent to  $f\geq 0$ on $S\coloneqq\{(\bx,\w)\in\mR^{n+p}\mid\|\bx(\ell)\|^2+w_{\ell}^2=1,\ell\in[p]\}$. Sicen $0\notin S$, we have $f+\epsilon (\sum\limits_{i=1}^n x_{i}^{d}+\sum\limits_{\ell=1}^p w_{\ell}^{d})>0$ on $S$. If $f$ is positive definite and there exists $(\bx,\w)\in \mR^{n+p}$ such that $f(\bx)+\epsilon\sum\limits_{\ell=1}^p w_{\ell}^{d}=0$, we must have $(\bx,\w)=\mathbf{0}$. Hence, we  have $ f+\epsilon\sum\limits_{\ell=1}^p w_{\ell}^{d}>0$ on $S$. Under given assumptions, items (i), (ii)  follow  from Theorem \ref{spaput}.
\end{proof}

Note that a polynomial $f\geq 0$ on $\mR^n$ is equivalent to $\tilde{f}\geq 0$ on $\mR^{n+1}$.
Theorem \ref{sparserez} can be  generalized to non-homogeneous polynomials directly. We omit the proof for cleanness.
\begin{theorem}
Let $f\in\mR[\bx]$ with $\deg(f)=d$. Suppose  that $f$ admits a csp $(\bx(1),\dots,\bx(p))$ satisfying the RIP.
\bit 
\item[(1)] If $f\geq 0$ on $\mathbb{R}^n$, then for any $\epsilon>0$, there exist $\sigma_{\ell}\in\Sigma[\tilde{\bx}(\ell)\cup\{w_\ell\}],\tau_{\ell}\in\mR[\tilde{\bx}(\ell)\cup\{w_\ell\}]$, $\ell\in [p]$ such that
\begin{equation}
\tilde{f}+\epsilon \left(\sum\limits_{i=0}^n x_{i}^{d}+\sum\limits_{\ell=1}^p w_{\ell}^{d}\right)=\sum_{\ell=1}^p\left(\sigma_{\ell}+\tau_{\ell}\left(\|\tilde{\bx}(\ell)\|^2+w_\ell^2-1\right)\right),
\end{equation}
	
\item[(2)] If $f>0$ on $\mathbb{R}^n$ and the form $f^{(\infty)}$ is positive definite, then for any $\epsilon>0$, there exist $\sigma_{\ell}\in\Sigma[\tilde{\bx}(\ell)\cup\{w_\ell\}]$, $\tau_{\ell}\in\mR[\tilde{\bx}(\ell)\cup\{w_\ell\}]$, $\ell \in [p]$ such that
\begin{equation}
\tilde{f}+\epsilon\sum\limits_{\ell=1}^p w_{\ell}^{d}=\sum_{\ell=1}^p\left(\sigma_{i}+\tau_{\ell}\left(\|\tilde{\bx}(\ell)\|^2+w_\ell^2-1\right)\right),
\end{equation}
\eit
\end{theorem}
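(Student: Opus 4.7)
The plan is to reduce both assertions to the homogeneous sparse Positivstellensatz (Theorem~\ref{sparserez}) by passing to the homogenization $\tilde{f}(\tilde{\bx}) = x_0^d f(\bx/x_0)$, viewed as a form of degree $d$ in the $n+1$ variables $\tilde{\bx} = (x_0, x_1, \ldots, x_n)$. Starting from a csp decomposition $f = \sum_{\ell=1}^p f_\ell$ with $f_\ell \in \mathbb{R}[\bx(\ell)]$, the identity
\[
\tilde{f}(\tilde{\bx}) \;=\; \sum_{\ell=1}^p x_0^{\,d - \deg f_\ell}\, \tilde{f}_\ell(\tilde{\bx}(\ell))
\]
shows that $\tilde{f}$ decomposes along the lifted cliques $\tilde{\bx}(\ell) = \{x_0\} \cup \bx(\ell)$, so it admits the csp $(\tilde{\bx}(1),\dots,\tilde{\bx}(p))$.

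I would next check that this lifted csp still satisfies the RIP. Since every lifted clique contains the common variable $x_0$, for each $\ell \in [p-1]$ one has
\[
\tilde{\bx}(\ell+1) \cap \bigcup_{j=1}^{\ell} \tilde{\bx}(j) \;=\; \{x_0\} \cup \Big(\bx(\ell+1) \cap \bigcup_{j=1}^{\ell} \bx(j)\Big) \;\subseteq\; \{x_0\} \cup \bx(s) \;=\; \tilde{\bx}(s),
\]
where $s$ is the index supplied by the RIP of the original csp. Thus the RIP is preserved under the lift — this mirrors the argument of Lemma~\ref{keepcsp}.

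For part~(1), I would verify that $\tilde{f} \geq 0$ on $\mathbb{R}^{n+1}$: nonnegativity of $f$ on $\mathbb{R}^n$ forces $d$ to be even, so on $\{x_0 \neq 0\}$ the identity $\tilde{f}(\tilde{\bx}) = x_0^d f(\bx/x_0)$ yields nonnegativity, and on $\{x_0 = 0\}$ one has $\tilde{f}(0,\bx) = f^{(\infty)}(\bx) = \lim_{t \to \infty} f(t\bx)/t^d \geq 0$. Applying Theorem~\ref{sparserez}(1) to the form $\tilde{f}$ with the RIP-preserving csp $(\tilde{\bx}(1),\dots,\tilde{\bx}(p))$ then delivers the desired representation; the perturbation $\sum_{i=1}^{n+1}(\text{variable})^d$ produced by Theorem~\ref{sparserez}(1) becomes $\sum_{i=0}^n x_i^d$ once the extra variable $x_0$ is included.

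For part~(2), I would show $\tilde{f}$ is a positive definite form: for any $\tilde{\bx} \neq 0$ with $x_0 \neq 0$, positivity follows from $f > 0$ on $\mathbb{R}^n$ together with $d$ even; for $x_0 = 0$ with $\bx \neq 0$, $\tilde{f}(0,\bx) = f^{(\infty)}(\bx) > 0$ by hypothesis. Theorem~\ref{sparserez}(2) applied to $\tilde{f}$ then yields the claimed representation. The argument is essentially a homogenization reduction plus the RIP check; the only (mild) subtlety is that both hypotheses in part~(2) — pointwise positivity of $f$ and positive definiteness of $f^{(\infty)}$ — are genuinely needed, the first handling $\{x_0 \neq 0\}$ and the second the hyperplane $\{x_0 = 0\}$, to obtain positive definiteness of $\tilde{f}$ on all of $\mathbb{R}^{n+1} \setminus \{\mathbf{0}\}$.
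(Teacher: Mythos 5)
Your proposal is correct and is precisely the reduction the paper has in mind: the authors omit the proof, remarking only that ``$f\geq 0$ on $\mathbb{R}^n$ is equivalent to $\tilde{f}\geq 0$ on $\mathbb{R}^{n+1}$'' and that Theorem~\ref{sparserez} ``can be generalized to non-homogeneous polynomials directly,'' and you have supplied exactly the details that remark suppresses (the lifted csp $(\tilde{\bx}(1),\dots,\tilde{\bx}(p))$, preservation of the RIP via the common variable $x_0$, evenness of $d$, and positive definiteness of $\tilde{f}$ under the hypotheses of part~(2)). No gaps.
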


Let $K$ be defined as in \reff{1.1}. Define 
\[K^{(\infty)}  \coloneqq  \left\{  \bx
\in \mathbb{R}^{n}
\left| \baray{l}
g^{(\infty)}_{j}(\bx) \geq 0,\quad j\in [m], \\
\|\bx\|^2-1=0.
\earay \right. \right\}\]
If $K$ is closed at infinity and $f$ is bounded from below on $K$, then $f^{(\infty)}\geq 0$ on $K^{(\infty)}$ \cite[Theorem 3.6.]{huang2023homogenization}. The polynomial $f$ is said to be \emph{positive at infinity} on $K$ if $f^{(\infty)}> 0$ on $K^{(\infty)}$. When $K$ is closed at infinity, $f\geq 0$ on $K$ if and only if $\tilde{f}\geq 0$ on $\widetilde{K}$. Thus we can derive the following sparse homogenized version of Putinar-Vasilescu's Positivstellensatz from Theorem \ref{spaput}.
\begin{theorem} \label{spa:PV}
Suppose that $K$ is closed at infinity, and  POP \reff{1.1} admits a csp $(\bx(1),\dots,\bx(p))$ satisfying the RIP.    
\bit
\item[(1)] If $f\geq 0$ on $K$, then for any $\epsilon>0$,  there exist $\sigma_{\ell}\in\qmod{(\{x_0\}\cup \{\tilde{g}_j\}_{j\in J_\ell},\tilde{\bx}(\ell)\cup\{w_{\ell}\}},\tau_{\ell}\in\mR[\tilde{\bx}(\ell)\cup\{w_{\ell}\}]$, $\ell\in [p]$ such that
\begin{equation}
\tilde{f}+\epsilon \left (\sum\limits_{i=0}^n x_{i}^{d_0}+\sum\limits_{\ell=1}^p w_{\ell}^{d_0}\right)=\sum_{\ell=1}^p\left(\sigma_{\ell}+\tau_{\ell}\left(\|\tilde{\bx}(\ell)\|^2+w_\ell^2-1\right)\right).
\end{equation}
\item[(2)] If $f>0$ on $K$ and $f$ is positive definite at infinity on $K$, then for any $\epsilon>0$, there exist $\sigma_{\ell}\in\qmod{(\{x_0\}\cup \{\tilde{g}_j\}_{j\in J_\ell},\tilde{\bx}(\ell)\cup\{w_{\ell}\}},\tau_{\ell}\in\mR[\tilde{\bx}(\ell)\cup\{w_{\ell}\}]$, $\ell\in [p]$ such that
\begin{equation}
\tilde{f}+\epsilon\sum\limits_{\ell=1}^p w_{\ell}^{d_0}=\sum_{\ell=1}^l\left(\sigma_{\ell}+\tau_{\ell}\left(\|\tilde{\bx}(\ell)\|^2+w_\ell^2-1\right)\right).
\end{equation}
\eit 
\end{theorem}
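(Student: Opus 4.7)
The plan is to derive both parts by applying the sparse Putinar Positivstellensatz (Theorem~\ref{spaput}) to the perturbed polynomial on the sparse homogenized set $\widetilde{K}_s^{1}$, exactly in the spirit of the convergence proof in Subsection~\ref{subse:conana}. Three things need to be checked: (i) the csp $(\tilde{\bx}(\ell)\cup\{w_\ell\})_{\ell=1}^p$ satisfies the RIP and each per-clique quadratic module together with the equality ideal is Archimedean; (ii) the perturbed polynomial is strictly positive on $\widetilde{K}_s^{1}$; and (iii) the invocation of Theorem~\ref{spaput} produces exactly the stated form with the ideal parts appearing as $\tau_\ell h_\ell$.

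Item (i) is immediate. Lemma~\ref{keepcsp} delivers the RIP, and in each clique $\ell$ the generator $h_\ell$ gives $R-\|\tilde{\bx}(\ell)\|^2 - w_\ell^2 \in \ideal{h_\ell,\tilde{\bx}(\ell)\cup\{w_\ell\}}$ for any $R\geq 1$, which secures the Archimedean property of $\ideal{h_\ell,\tilde{\bx}(\ell)\cup\{w_\ell\}}+\qmod{\{x_0\}\cup\{\tilde{g}_j\}_{j\in J_\ell},\tilde{\bx}(\ell)\cup\{w_\ell\}}$. Item (iii) is routine once (ii) holds, since equality generators absorb naturally into the ideal term when Theorem~\ref{spaput} is applied.

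For (ii) in part (1): closedness at infinity says $f\geq 0$ on $K$ is equivalent to $\tilde{f}\geq 0$ on $\widetilde{K}$. Given $(\tilde{\bx},\w)\in\widetilde{K}_s^{1}$, either $\tilde{\bx}\neq 0$, in which case $\tilde{\bx}/\|\tilde{\bx}\|\in\widetilde{K}$ by homogeneity of the $\tilde{g}_j$'s together with $x_0\geq 0$, so $\tilde{f}(\tilde{\bx})=\|\tilde{\bx}\|^d\tilde{f}(\tilde{\bx}/\|\tilde{\bx}\|)\geq 0$; or $\tilde{\bx}=\mathbf{0}$, in which case the spherical constraints force $w_\ell^2=1$ for every $\ell$, yielding $\tilde{f}(\mathbf{0})=0$ and $\sum_\ell w_\ell^{d_0}=p$. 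Because $\mathbf{0}\notin\widetilde{K}_s^{1}$ and $d_0$ is even, the perturbation $\epsilon\bigl(\sum_{i=0}^n x_i^{d_0}+\sum_{\ell=1}^p w_\ell^{d_0}\bigr)$ is strictly positive on $\widetilde{K}_s^{1}$. Thus the whole perturbed polynomial is strictly positive on $\widetilde{K}_s^{1}$, and Theorem~\ref{spaput} yields the claimed representation.

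For part (2), the hypothesis that $f$ is positive definite at infinity on $K$ combined with $f>0$ on $K$ upgrades $\tilde{f}\geq 0$ on $\widetilde{K}$ to $\tilde{f}>0$ on $\widetilde{K}$: for $x_0>0$ this follows from $f>0$ on $K$ via dehomogenization, while for $x_0=0$ we use $\tilde{f}|_{x_0=0}=f^{(\infty)}>0$ on $K^{(\infty)}$. Therefore $\tilde{f}(\tilde{\bx})>0$ whenever $\tilde{\bx}\neq 0$ on $\widetilde{K}_s^{1}$, and the remaining case $\tilde{\bx}=\mathbf{0}$ contributes $\epsilon\sum_\ell w_\ell^{d_0}=\epsilon p>0$. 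This is precisely what lets us drop the $\epsilon\sum_i x_i^{d_0}$ term while retaining strict positivity, after which the same invocation of Theorem~\ref{spaput} delivers the representation in (2). The main subtlety throughout is the rescaling step $\tilde{\bx}\mapsto\tilde{\bx}/\|\tilde{\bx}\|$, which is what transports (strict) positivity of $\tilde{f}$ from $\widetilde{K}$ to $\widetilde{K}_s^{1}$; once that and the per-clique Archimedeanness are in hand, the sparse Putinar result does the rest.
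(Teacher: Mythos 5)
Your proposal is correct and follows exactly the route the paper intends: the paper gives no detailed proof of Theorem~\ref{spa:PV}, merely noting that closedness at infinity makes $f\geq 0$ on $K$ equivalent to $\tilde f\geq 0$ on $\widetilde K$ and that the claim then follows from Theorem~\ref{spaput}, and you supply precisely the missing details (RIP via Lemma~\ref{keepcsp}, Archimedeanness from the per-clique spherical constraints, strict positivity of the perturbed polynomial on $\widetilde{K}_s^{1}$ via the rescaling $\tilde\bx\mapsto\tilde\bx/\|\tilde\bx\|$ and the case $\tilde\bx=\mathbf{0}$, which mirror the convergence proof in Subsection~\ref{subse:conana}). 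The only nitpick is that $R-\|\tilde\bx(\ell)\|^2-w_\ell^2$ lies in $\ideal{h_\ell,\tilde\bx(\ell)\cup\{w_\ell\}}$ itself only for $R=1$ (for $R>1$ one needs the ideal plus the constant SOS $R-1$), but since a single such $R$ suffices for Archimedeanness this does not affect the argument.
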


\subsection{Extraction of minimizers}\label{extr:mini}

In the case of the dense Moment-SOS hierarchy, a convenient criterion for detecting global optimality is flat extension/truncation (see \cite{curto2005truncated,laurent2009sums,nie2023moment}). A procedure for extracting minimizers is given in \cite{henrion2005detecting}. This procedure was generalized to polynomial optimization with correlative sparsity in \cite{lasserre2006convergent}. We  adapt it  to extract minimizers from the sparse homogenized moment relaxtions \reff{spmom}.

Let
\begin{equation}
d_{K}\coloneqq\max\{\lceil\deg(g_j)/2\rceil,j\in [m]\} \text{ and } d_{\ell}\coloneqq\max\{\lceil\deg(g_j)/2 \rceil,j\in J_{\ell}\}, \,\ell\in[p].
\end{equation}
Suppose that $\by^{*}$ is an optimal solution of $\eqref{spmom}$ at the $k$th order relaxation.
If there exists an integer $t\in[d_K, k]$ such that
\begin{equation}\label{FLT}
\begin{split}
\operatorname{rank} M_t\left(\by^{*}, \tilde{\bx}(\ell)\cup\{w_{\ell}\}\right)=\operatorname{rank} M_{t-d_{\ell}}\left(\by^{*}, \tilde{\bx}(\ell)\cup\{w_{\ell}\}\right), \text { for all } \ell \in[p],\\
\operatorname{rank} M_t\left(\by^{*}, \tilde{\bx}(i) \cap \tilde{\bx}(j)\right)=1, \text { for all }  i\ne j\in[p]  \text { with }  \tilde{\bx}(i) \cap \tilde{\bx}(j)\neq \emptyset,
\end{split}
\end{equation}
then the moment relaxation \reff{spmom} is exact, i.e., $f_k^{\prime}=f^{(\epsilon)}$. Furthermore, by applying the extraction procedure in \cite{henrion2005detecting} to each moment matrix $M_{t}(\by^*, \tilde{\bx}(\ell)\cup\{w_{\ell}\})$, we obtain a set of points
\[
\Delta_{\ell}\coloneqq\left\{(\tilde{\bx}^*(\ell),w^*_{\ell})\right\}\subseteq\mathbb{R}^{|\bx(\ell)|+2},\quad \ell\in [p],
\]
where $|\bx(\ell)|$ stands for the dimension of $\bx(\ell)$.
Next we show that approximate minimizers (exact minimizers if $\epsilon=0$) of \reff{1.1} can be extracted from the sets $\Delta_{\ell}$ $(\ell \in[p])$. 

\begin{thm}
Suppose that $K$ is closed at infinity  and POP \reff{1.1} admits a csp $(\bx(1),\dots,\bx(p))$, and $\by^{*}$ is an optimal solution of $\eqref{spmom}$ satisfying \reff{FLT}. Let
\begin{equation*}
\Omega\coloneqq\left\{(x_0^*,\bx^*)\in \mR^{n+1}\mid \text{there exists~} w^*_{\ell}\in \mR \text{~such that~} (\tilde{\bx}^*(\ell),w^*_{\ell}) \in \Delta_{\ell} ~\text{for~} \ell \in [p]\right\}.
\end{equation*}
If $\tilde{\bx}^*\in \Omega$ with $x_0^*>0$, then $\bx^*/x_0^*\in K$ and
\begin{equation}\label{sec4:eq1}
  f(\bx^*/x_0^*) +\epsilon \left(\sum\limits_{i=0}^n (x^*_{i})^{d_0}+\sum\limits_{\ell=1}^p (w^*_{\ell})^{d_0}\right)/(x_0^*)^d=f^{(\epsilon)},  
\end{equation}
where $w^*_{\ell}$ is from $\Delta_{\ell}$. In particular, when $\epsilon=0$, we have $f(\bx^*/x_0^*)=f_{\min}$ and $\bx^*/x_0^*$ is a minimizer of \reff{1.1}.
\end{thm}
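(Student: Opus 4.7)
The plan is to leverage the flat extension condition (3.14) to produce a finite atomic representing measure on the sparse homogenized feasible set $\widetilde{K}_s^1$, and then show that each atom with $x_0>0$ dehomogenizes to a near-minimizer of (1.1). I would proceed in four steps.

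\textbf{Step 1 (Atomic representation on each clique).} For each $\ell\in[p]$ the first rank condition in (3.14), $\rank M_t(\by^*,\tilde{\bx}(\ell)\cup\{w_\ell\})=\rank M_{t-d_\ell}(\by^*,\tilde{\bx}(\ell)\cup\{w_\ell\})$, is the standard flatness hypothesis of Curto--Fialkow, so the clique moment submatrix admits a finite atomic representing measure $\mu_\ell$ supported on the points $\Delta_\ell$ extracted by the Henrion--Lasserre procedure. The positive semidefiniteness of the localizing matrices $M_{k-1}[x_0,\cdot]$, $M_{k-\lceil\deg(g_j)/2\rceil}[\tilde g_j\by,\cdot]$ and the equality $M_{k-1}[h_\ell\by,\cdot]=0$ force $\supp\mu_\ell$ into the clique-wise feasible region $\{(\tilde{\bx}(\ell),w_\ell):x_0\ge 0,\;\tilde g_j\ge 0\;\forall j\in J_\ell,\;\|\tilde{\bx}(\ell)\|^2+w_\ell^2=1\}$.

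\textbf{Step 2 (Gluing under RIP).} The second rank condition in (3.14) forces $\rank M_t(\by^*,\tilde{\bx}(i)\cap\tilde{\bx}(j))=1$ on every nonempty overlap, which means the marginals of the $\mu_\ell$ on shared variables are Dirac masses; combined with the RIP inherited by $(\tilde{\bx}(1)\cup\{w_1\},\dots,\tilde{\bx}(p)\cup\{w_p\})$ from Lemma~3.3, this is exactly the setup of Lasserre's sparse extraction theorem. I would therefore glue the $\mu_\ell$ into a joint atomic measure $\mu=\sum_i\lambda_i\delta_{(\tilde{\bx}^{(i)},\w^{(i)})}$ supported in $\widetilde{K}_s^1$ whose clique-marginals recover the $\mu_\ell$, so that $\Omega$ is precisely the projection of $\supp\mu$ onto the $\tilde{\bx}$-coordinates and $f_k'=f^{(\epsilon)}$.

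\textbf{Step 3 (Dehomogenization).} Fix $\tilde{\bx}^*\in\Omega$ with $x_0^*>0$. Because $(\tilde{\bx}^*,\w^*)\in\widetilde{K}_s^1$, we have $\tilde g_j(\tilde{\bx}^*)\ge0$ for all $j\in[m]$; by homogeneity $\tilde g_j(\tilde{\bx}^*)=(x_0^*)^{\deg(g_j)}g_j(\bx^*/x_0^*)$, so dividing by the positive quantity $(x_0^*)^{\deg(g_j)}$ yields $g_j(\bx^*/x_0^*)\ge0$ and hence $\bx^*/x_0^*\in K$.

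\textbf{Step 4 (Atom-wise optimality).} The constraint $\langle x_0^d,\by^*\rangle=1$ gives $\sum_i\lambda_i(x_0^{(i)})^d=1$, and the objective identity gives $\sum_i\lambda_i\bigl[\tilde f(\tilde{\bx}^{(i)})+\epsilon\bigl(\sum_{r=0}^n(x_r^{(i)})^{d_0}+\sum_{\ell=1}^p(w_\ell^{(i)})^{d_0}\bigr)\bigr]=f^{(\epsilon)}$. Each atom is feasible for (3.4), so $\tilde f(\tilde{\bx}^{(i)})+\epsilon(\cdots)\ge f^{(\epsilon)}(x_0^{(i)})^d$. Taking a $\lambda_i$-weighted sum collapses both sides to $f^{(\epsilon)}$, so equality must hold at every atom with $\lambda_i>0$. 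Dividing by $(x_0^*)^d$ at the chosen atom yields (3.15); when $\epsilon=0$, Theorem~3.1 gives $f^{(0)}=f_{\min}$, so $f(\bx^*/x_0^*)=f_{\min}$ and $\bx^*/x_0^*$ is a global minimizer of (1.1).

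The main obstacle is Step~2: the sparse gluing of the $\mu_\ell$ into a consistent atomic measure on $\widetilde{K}_s^1$. The rank-one condition on overlapping moment submatrices is precisely what rules out atoms of different cliques being incompatible on shared variables, and the RIP is what lets the gluing proceed inductively over the clique tree; without both the argument collapses. Once this is in place, the remaining steps are bookkeeping with homogenization.
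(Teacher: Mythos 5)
Your proposal is correct and follows essentially the same route as the paper: the paper simply applies \cite[Theorem 3.2]{lasserre2006convergent} to the auxiliary problem obtained by subtracting $f^{(\epsilon)}x_0^d$ from the objective over $\widetilde{K}_s^{1}$ (whose optimal value is $0$) and then dehomogenizes exactly as in your Steps 3--4, while you unpack that citation into its Curto--Fialkow flatness and overlap-gluing ingredients. The one point to treat with care is your identification in Step 2 of $\Omega$ with the projection of $\supp{\mu}$: $\Omega$ consists of all mix-and-match combinations of atoms from the $\Delta_{\ell}$, and the fact that every such combination (not merely the atoms of one glued measure) is optimal is precisely the content of Lasserre's sparse extraction theorem, on which both you and the paper ultimately rely.
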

\begin{proof}
Consider the optimization problem:
\be\label{homloc}
\left\{\baray{lll}
\inf & \tilde{f}(\tilde{\bx}) +\epsilon \left(\sum\limits_{i=0}^n x_{i}^{d_0}+\sum\limits_{\ell=1}^p w_{\ell}^{d_0}\right)-f^{(\epsilon)} x_0^d\\
\st  & x_{0} \geq 0,\quad\tilde{g}_{j}(\tilde{\bx}) \geq 0,\quad j\in [m], \\
&\|\tilde{\bx}(\ell)\|^2+w_{\ell}^2=1, \quad\ell \in [p],
\earay \right.
\ee
whose optimal value is clearly $0$. Let $\tilde{\bx}^*\in \Omega$ with $x_0^*>0$.
It follows from \cite[Theorem 3.2]{lasserre2006convergent} that $(\tilde{\bx}^*,\w^*)$ is a minimizer of \reff{homloc} and
\[
\tilde{f}(\tilde{\bx}^*) +\epsilon\left (\sum\limits_{i=0}^n (x^*_{i})^{d_0}+\sum\limits_{\ell=1}^p (w^*_{\ell})^{d_0}\right)-f^{(\epsilon)}(x_0^*)^d=0.
\]
Note that $\tilde{f}(\tilde{\bx}^*)=(x_0^*)^df(\bx^*/x_0^*)$, $\tilde{g}_j(\tilde{\bx}^*)=(x_0^*)^{\deg(g_j)}g(\bx^*/x_0^*)$ $(j\in [m])$. Thus, we have  that $\bx^*/x_0^*\in K$ and  \eqref{sec4:eq1} holds.
\end{proof}


\section{Sparse homogenized Moment-SOS hierarchy without perturbations}\label{sc:shmg2}

For the sparse homogenized hierarchy \reff{spsos}--\reff{spmom} to converge, perturbations are  typically required as illustrated in Remark \ref{rema3.5}. A natural question is whether we can design a sparse homogenized Moment-SOS hierarchy without perturbations while having asymptotic convergence to the optimal value $f_{\min}$ rather than a neighborhood of $f_{\min}$.
In the following, we provide such a hierarchy by introducing a new sparse reformulation of \reff{1.1}.

Suppose that \reff{1.1} admits a csp $(\bx(1),\dots,\bx(p))$. For $i\in[n]$, let $p_i$ denote the frequency of the variable $x_i$ occurring in $\bx(1), \dots, \bx(p)$. Define the set
\be\label{spK1}
\widetilde{K}_s^{2} \coloneqq \left\{(\tilde{\bx},\w) \in \mathbb{R}^{n+1+p}
\left| \baray{l}
x_{0} \geq 0,\quad\tilde{g}_{j}(\tilde{\bx}) \geq 0,\quad j\in [m], \\
\sum\limits_{x_i\in \bx(\ell) } \frac{1}{p_i}x_{i}^2+ \frac{1}{p}x_0^2+w_{\ell}^2=1,\quad\ell \in [p],\\
\|\w\|^2=p-1, \quad \mathbf{1}-\tilde{\bx}^{2}\ge\mathbf{0},\quad\mathbf{1} - \w^{2}\ge\mathbf{0}.\\
\earay \right. \right\}
\ee
Consider the new sparse  reformulation for \reff{1.1} ( $d\coloneqq\deg(f)$):
\begin{equation}\label{hommax1} 
\left\{ \begin{array}{lll}
\sup & \gamma \\
\st &  \tilde{f}(\tilde{\bx})- \gamma x_0^d \geq 0, \quad\forall(\tilde{\bx},\w)\in\widetilde{K}_s^{2}.
\end{array} \right.
\end{equation}

Let $\bu\coloneqq(\tilde{\bx},\w)$. Assume that $(\bu(1),\dots,\bu(q))$ be the list of maximal cliques of some chordal extension of the csp graph associated with POP \reff{hommax1}.
Let
\begin{equation*}
    h_{\ell}'\coloneqq \sum\limits_{x_i\in\bx(\ell)}\frac{1}{p_i}x_{i}^2+ \frac{1}{p}x_0^2+w_{\ell}^2-1\, (\ell\in[p]),~h_{p+1}'\coloneqq\|\w\|^2-p+1.
\end{equation*}
Let $\{J_1,\dots, J_q\}$ be a partition of $[m]$ such that for every $\ell\in [q]$ and every $j\in J_\ell$, $g_j\in\mathbb{R}[\bu(\ell)]$. Moreover, let $\{I_1,\dots, I_q\}$ be a partition of $[p+1]$ such that for every $\ell\in [q]$ and every $j\in I_\ell$, $h_j' \in\mathbb{R}[\bu(\ell)]$.

For the order $k\ge d_{\min}$, the $k$th order sparse SOS relaxation of \reff{hommax1} is 
\begin{equation}\label{spsos1}
\left\{ \begin{array}{lll}
\sup &  \gamma  &\\
\st &  \tilde{f}(\tilde{\bx})  - \gamma x_0^d  \in &\sum\limits_{\ell=1}^q\ideal{\{h'_{j}\}_{j\in I_{\ell}},\bu(\ell)}_{2k}\\ 
& &\quad\quad+\sum\limits_{\ell=1}^q\qmod{\{x_0\}\cup\{1-u_i^2\}_{u_i\in\bu(\ell)}\cup\{\tilde{g}_j\}_{j\in J_\ell},\bu(\ell)}_{2k},\\
\end{array} \right.
\end{equation}

The dual of \reff{spsos1} is the $k$th order sparse moment relaxation:
\be\label{spmom1}
\left\{\begin{array}{lll}
\inf  &\langle \tilde{f}, \by\rangle \\
\st  &\langle x_0^d,\by\rangle=1,\quad M_k[\by, \bu(\ell)] \succeq 0,\quad\ell \in [q],\\
&M_{k-1}[h'_j\by, \bu(\ell)] = 0,\quad j\in I_{\ell},\ell \in [q],\\
&M_{k-1}[x_0\by, \bu(\ell)] \succeq 0,\quad\ell \in [q],\\
& M_{k-\lceil\deg(g_{j})/2\rceil}[\tilde{g}_j \by,\bu(\ell)] \succeq 0, \quad j \in J_\ell, \ell\in [q],\\
&M_{k-1}[(1-u_i^2)\by, \bu(\ell)] = 0,\quad u_i\in\bu(\ell),\ell\in [q]
\end{array}\right.
\ee
Let $\bar{f}_k$, $\bar{f}_k^{\prime}$ be the optimal values of \reff{spsos1} and \reff{spmom1}, respectively.

\subsection{Convergence analysis}\label{sec4-1}
In the following, we show that \reff{1.1} and \reff{hommax1} have the same optimal values.
\begin{theorem}
Suppose that $K$ is closed at infinity. Then the optimal value of \reff{hommax1} is $f_{\min}$.
\end{theorem}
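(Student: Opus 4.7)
The plan is to show that the set $\widetilde{K}_s^{2}$ is essentially the fiber product of $\widetilde{K}$ with an appropriate auxiliary set, so that problem \reff{hommax1} coincides with the homogenized dense reformulation \reff{hpop}, whose optimal value is already known to equal $f_{\min}$ when $K$ is closed at infinity. The key observation is that the spherical constraints used in $\widetilde{K}_s^{2}$, although sparse (each involves only one clique plus one auxiliary variable $w_\ell$), jointly encode the single dense constraint $\|\tilde{\bx}\|^2 = 1$ used in $\widetilde{K}$.

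First I would sum the equations $\sum_{x_i \in \bx(\ell)} \tfrac{1}{p_i} x_i^2 + \tfrac{1}{p} x_0^2 + w_\ell^2 = 1$ over $\ell = 1, \ldots, p$. Because each variable $x_i$ (with $i \geq 1$) belongs to exactly $p_i$ cliques, the double sum collapses to $\sum_{i=1}^n x_i^2$, the $x_0^2$ contribution telescopes to $x_0^2$, and the $w_\ell^2$ contributions sum to $\|\w\|^2$. Combined with the constraint $\|\w\|^2 = p-1$, this yields $\|\tilde{\bx}\|^2 = 1$. Together with $x_0 \geq 0$ and $\tilde{g}_j(\tilde{\bx}) \geq 0$, this shows that every $(\tilde{\bx}, \w) \in \widetilde{K}_s^{2}$ satisfies $\tilde{\bx} \in \widetilde{K}$.

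For the converse inclusion, given $\tilde{\bx} \in \widetilde{K}$, define $w_\ell \coloneqq \sqrt{1 - \sum_{x_i \in \bx(\ell)} \tfrac{1}{p_i} x_i^2 - \tfrac{1}{p} x_0^2}$ for $\ell \in [p]$. The argument of the square root is nonnegative because the coefficients $1/p_i$ and $1/p$ all lie in $(0,1]$ and $\|\tilde{\bx}\|^2 = 1$, so $\sum_{x_i \in \bx(\ell)} \tfrac{1}{p_i} x_i^2 + \tfrac{1}{p} x_0^2 \leq \sum_{i=0}^n x_i^2 = 1$; the same bound gives $w_\ell \in [0,1]$. The box constraints $\mathbf{1} - \tilde{\bx}^2 \geq \mathbf{0}$ and $\mathbf{1} - \w^2 \geq \mathbf{0}$ then hold automatically, and the identity $\|\w\|^2 = p - 1$ follows from the telescoping computation above. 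Hence $(\tilde{\bx}, \w) \in \widetilde{K}_s^{2}$, so the projection of $\widetilde{K}_s^{2}$ onto the $\tilde{\bx}$-coordinates is exactly $\widetilde{K}$.

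Since the objective $\tilde{f}(\tilde{\bx}) - \gamma x_0^d$ depends only on $\tilde{\bx}$, the constraint in \reff{hommax1} that it be nonnegative on $\widetilde{K}_s^{2}$ is equivalent to the constraint that it be nonnegative on $\widetilde{K}$. Thus \reff{hommax1} coincides with \reff{hpop}, and the known equivalence for sets closed at infinity (namely, $f - \gamma \geq 0$ on $K$ iff $\tilde{f} - \gamma x_0^d \geq 0$ on $\widetilde{K}$, recalled just before \reff{hpop}) gives that the optimal value is $f_{\min}$. The only nontrivial step is verifying $w_\ell \in [0,1]$ in the converse construction, which hinges on the elementary bound that the coefficients $1/p_i$ and $1/p$ do not exceed one; this is what makes the particular weighted partition of the sphere used in $\widetilde{K}_s^{2}$ the right one for the proof to go through.
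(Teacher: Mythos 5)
Your proof is correct, and it takes a genuinely different route from the paper's. You establish the exact identity that the projection of $\widetilde{K}_s^{2}$ onto the $\tilde{\bx}$-coordinates equals $\widetilde{K}$ (summing the $p$ weighted spherical constraints against $\|\w\|^2=p-1$ to recover $\|\tilde{\bx}\|^2=1$, and lifting any point of $\widetilde{K}$ back via the explicit $w_\ell$), and then invoke the known equivalence between \reff{1.1} and the dense homogenized problem \reff{hpop} under closedness at infinity. The paper instead argues directly on $\widetilde{K}_s^{2}$: for the lower bound it splits into the cases $x_0=0$ (using $f^{(\infty)}\geq 0$ on $K^{(\infty)}$, a consequence of closedness at infinity and $f_{\min}>-\infty$) and $x_0\neq 0$ (dehomogenizing), and for the upper bound it lifts a single minimizer $\bx^*$ of \reff{1.1} into $\widetilde{K}_s^{2}$ — the same lift you use, but applied only at one point. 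Your reduction buys two things: it makes transparent that the sparse spherical constraints are just a reparametrization of the dense sphere, and it avoids the paper's implicit reliance on the existence of a minimizer $\bx^*$ for the upper-bound direction (the standing assumption is only $f_{\min}>-\infty$, and the dense equivalence $f-\gamma\geq 0$ on $K$ iff $\tilde f-\gamma x_0^d\geq 0$ on $\widetilde K$ requires no attainment). What it costs is self-containedness: you outsource the real content (handling the points at infinity, i.e., $x_0=0$) to the cited dense result, whereas the paper's case analysis exhibits that mechanism explicitly.
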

\begin{proof}
Since $K$ is closed at infinity and $f_{\min}>-\infty$, 
we know that $f^{(\infty)}\geq 0$ on $K^{(\infty)}$. Take any $(\tilde{\bx},\w)\in \widetilde{K}_s^{2}$. If $x_0=0$, then we have $g_j^{(\infty)}(\bx)=\tilde{g}_j(\tilde{\bx})\geq 0$ for $j\in [m]$ and thus $\tilde{f}(\tilde{\bx})-f_{\min} x_0^d=f^{(\infty)}(\bx)\geq 0$. 
If $x_0\neq 0$, we have $\bx/x_0\in K$, and 
\[\tilde{f}(\tilde{\bx})- f_{\min} x_0^d=x_0^d(f(\bx/x_0)-f_{\min})\geq 0.\]
It follows that $f_{\min}$ is no greater than the optimal value of \reff{hommax1}.
For the converse, let $\bx^*$ be a minimizer of \reff{1.1}. Let
\[\tilde{\bx}^*=\frac{(1,\bx^*)}{\sqrt{1+\|\bx^*\|^2}},~w^*_{\ell}=\sqrt{1-\sum\limits_{x_i\in \bx(\ell) } \frac{1}{p_i}(x^*_{i})^2-\frac{1}{p}(x_0^*)^2}~(\ell \in [p]).
\]
One can verify $(\tilde{\bx}^*,\w^*)\in \widetilde{K}_s^{2}$. If $\gamma$ is feasible for (\ref{hommax1}), then $
\gamma\leq \tilde{f}(\tilde{\bx}^*)/(x^*_0)^d=f_{\min}$.
\end{proof}

We now establish asymptotic convergence of the sparse homogenized Moment-SOS hierarchy \reff{spsos1}--\reff{spmom1}.
\begin{thm}\label{alt:conve}
Suppose that $K$ is closed at infinity, $f^{(\infty)}$ is positive definite at $\infty$ on $K$, and \reff{1.1} admits a csp $(\bx(1),\dots,\bx(p))$. Then, $\bar{f}_k\rightarrow f_{\min}$ as $k\rightarrow \infty$.
\end{thm}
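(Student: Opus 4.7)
The plan is to parallel the convergence argument of the perturbed hierarchy in Section~\ref{sc:shmg} and reduce Theorem~\ref{alt:conve} to the sparse Putinar Positivstellensatz (Theorem~\ref{spaput}). Concretely, for every $\gamma<f_{\min}$ I will show that (i) $\tilde f(\tilde\bx)-\gamma x_0^d>0$ on $\widetilde K_s^{2}$, (ii) the quadratic module attached to each maximal clique $\bu(\ell)$ is Archimedean in $\mathbb{R}[\bu(\ell)]$, and (iii) the cliques $(\bu(1),\dots,\bu(q))$ coming from the chordal extension satisfy the RIP. Once these are in place, Theorem~\ref{spaput} yields membership of $\tilde f-\gamma x_0^d$ in the sparse quadratic module plus ideal appearing in \reff{spsos1} for all sufficiently large $k$, so $\gamma$ is feasible for \reff{spsos1} and $\bar f_k\geq\gamma$. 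Combined with the obvious bound $\bar f_k\leq f_{\min}$ (any $\gamma$ feasible for \reff{spsos1} is feasible for \reff{hommax1}, whose optimum equals $f_{\min}$ by the preceding theorem), this gives $\bar f_k\to f_{\min}$.

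The crux is strict positivity on $\widetilde K_s^{2}$, which I would prove by cases on $x_0$. If $x_0>0$, then $\tilde g_j(\tilde\bx)\geq 0$ forces $\bx/x_0\in K$, so $f(\bx/x_0)\geq f_{\min}>\gamma$ and hence $\tilde f(\tilde\bx)-\gamma x_0^d=x_0^d(f(\bx/x_0)-\gamma)>0$. The $x_0=0$ case is where the design of $\widetilde K_s^{2}$ pays off: summing the $p$ spherical constraints $\sum_{x_i\in\bx(\ell)}\frac{1}{p_i}x_i^2+\frac{1}{p}x_0^2+w_\ell^2=1$ over $\ell\in[p]$ yields $\|\tilde\bx\|^2+\|\w\|^2=p$, because every $x_i^2$ is counted $p_i$ times with weight $1/p_i$ and $\frac{1}{p}x_0^2$ is summed $p$ times. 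Combined with the constraint $\|\w\|^2=p-1$, this forces $\|\tilde\bx\|^2=1$, and $x_0=0$ then gives $\|\bx\|^2=1$. Since $g_j^{(\infty)}(\bx)=\tilde g_j(0,\bx)\geq 0$, one concludes $\bx\in K^{(\infty)}$, and the hypothesis that $f^{(\infty)}$ is positive definite at infinity on $K$ forces $\tilde f(\tilde\bx)=f^{(\infty)}(\bx)>0=\gamma x_0^d$.

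Archimedeanness is the routine ingredient: each local quadratic module contains $\{1-u_i^2\}_{u_i\in\bu(\ell)}$, so the sum $|\bu(\ell)|-\|\bu(\ell)\|^2$ lies in the module, providing an explicit ball containment. The RIP for $(\bu(1),\dots,\bu(q))$ is automatic because these are the maximal cliques of a chordal extension and therefore admit a perfect elimination ordering, as recalled in Section~\ref{pocs}.

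The principal obstacle is conceptual rather than computational: one must recognize that the weighting $1/p_i$, $1/p$ in \reff{spK1} together with the global constraint $\|\w\|^2=p-1$ has been engineered precisely so that the \emph{local} sphere constraints telescope into the single \emph{global} sphere $\|\tilde\bx\|^2=1$. Without this identity one cannot transfer positivity of $f^{(\infty)}$ on $K^{(\infty)}$ to positivity of $\tilde f$ on the stratum $\{x_0=0\}$ of $\widetilde K_s^{2}$, and the whole strategy collapses, much as in Remark~\ref{rema3.5} where the unperturbed sparse hierarchy can stall. Once this telescoping identity is isolated, the remaining steps reduce to a direct application of the classical sparse Positivstellensatz together with the weak duality bound $\bar f_k\leq\bar f_k'$.
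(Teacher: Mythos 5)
Your proposal is correct and follows essentially the same route as the paper: strict positivity of $\tilde f-\gamma x_0^d$ on $\widetilde K_s^{2}$ for $\gamma<f_{\min}$, Archimedeanness of each local quadratic module via the box constraints, and then Theorem~\ref{spaput}. The only (harmless) difference is in the $x_0=0$ stratum, where you sum the local sphere constraints to get the identity $\|\tilde\bx\|^2=1$ directly, whereas the paper merely argues by contradiction that $\bx\neq\mathbf{0}$ and then invokes homogeneity of $f^{(\infty)}$; both are valid.
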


\begin{proof}
For any $\gamma<f_{\min}$, we show that $\tilde{f}(\tilde{\bx}) -\gamma x_0^d >0$ on $\widetilde{K}_s^{2}$. Take any $(\tilde{\bx},\w)\in \widetilde{K}_s^{2}$. If $x_0=0$, then we must have $\bx\neq \mathbf{0}$. Suppose otherwise $\bx=\mathbf{0}$. Then $w_1^2=w_2^2=\cdots=w_{p}^2=1$,
which contradicts to the constraint $\|\w\|^2-p+1=0$ in the definition of $\widetilde{K}_s^{2}$. Since $f^{(\infty)}$ is positive definite at $\infty$ on $K$, we have
\[\tilde{f}(0,\bx)-\gamma\cdot 0^d=f^{(\infty)}(\bx)> 0.\] 
If $x_0\neq0$, we have $\bx/x_0\in K$ and
\be\tilde{f}(\tilde{\bx})-\gamma(x_0)^d=(x_0)^d(f(\bx/x_0)-\gamma)>0.\ee
Therefore, $\tilde{f}(\tilde{\bx})-\gamma x_0^d >0$ on $\widetilde{K}_s^{2}$ for any $\gamma<f_{\min}$. Moreover, for each $\ell\in[q]$, the quadratic module
\[\qmod{\{x_0\}\cup\{1-u_i^2\}_{u_i\in\bu(\ell)}\cup\{\tilde{g}_j\}_{j\in J_\ell},\bu(\ell)}\] 
is clearly Archimedean in $\mR[\bu(\ell)]$.
It follows from Theorem \ref{spaput} that for any $\gamma<f_{\min}$,
{\small 
\[\tilde{f}(\tilde{\bx})-\gamma x_0^d \in \sum\limits_{\ell=1}^q\left(\ideal{\{h'_{j}\}_{j\in I_{\ell}},\bu(\ell)}+\qmod{\{x_0\}\cup\{1-u_i^2\}_{u_i\in\bu(\ell)}\cup\{\tilde{g}_j\}_{j\in J_\ell},\bu(\ell)}\right).
\]
}
As a result, we obtain $\bar{f}_k\rightarrow f_{\min}$ as $k\rightarrow\infty$.
\end{proof}

\begin{remark}
If the flat truncation condition \eqref{FLT} is satisfied for the $k$th moment relaxation \reff{spmom1}, then $\bar{f}_k=f_{\min}$ and we can extract minimizers of \reff{1.1} via a similar procedure as described in Section \ref{extr:mini}.
\end{remark}

\subsection{Sparse Positivstellens\"atze without perturbations}
In this subsection, we provide new sparse Positivstellens\"atze for positive polynomials on general (possibly unbounded) semialgebraic sets and no perturbations are required.

First, we consider the unconstrained case, i.e., $K=\mR^n$.
\begin{theorem}\label{sparserez1}
Assume that $f>0$ on $\mathbb{R}^n$ and $f^{(\infty)}$ is positive definite.
Then there exist $\sigma_{\ell,i}\in\Sigma[\bu(\ell)]$ for each $u_i\in\bu(\ell),\ell\in [q]$ and $\tau_{\ell,j}\in\mR[\bu(\ell)]$ for each $j\in I_{\ell}, \ell\in [q]$ such that
\begin{equation}
\tilde{f}=\sum\limits_{\ell=1}^q\left(\sum_{u_i\in\bu(\ell)}\sigma_{\ell,i}(1-u_i^2)+\sum_{j\in I_{\ell}}\tau_{\ell,j}h'_{j}\right).
\end{equation}
\end{theorem}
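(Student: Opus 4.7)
My plan is to combine positive definiteness of the homogenized form $\tilde f$ with the sparse Putinar Positivstellensatz (Theorem \ref{spaput}), and then algebraically absorb the resulting pure SOS parts into the $(1-u_i^2)$ terms using identities supplied by the constraints $h'_j$.

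I would begin by upgrading the hypothesis to positive definiteness of $\tilde f$ on $\mathbb{R}^{n+1}$. Because $f^{(\infty)}$ is a positive definite form and $f^{(\infty)}(-\bx)=(-1)^d f^{(\infty)}(\bx)$, the degree $d:=\deg f$ must be even. Then for $\tilde{\bx}\ne 0$: if $x_0\ne 0$ one has $\tilde f(\tilde{\bx})=x_0^d f(\bx/x_0)>0$, while if $x_0=0$ one has $\tilde f(\tilde{\bx})=f^{(\infty)}(\bx)>0$. Next, consider the compact semialgebraic set $S=\{(\tilde{\bx},\w):\,1-u_i^2\ge 0\text{ for all }u_i,\,h'_j=0\text{ for }j\in[p+1]\}$. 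The linear combination $\sum_{\ell=1}^p h'_\ell-h'_{p+1}=\|\tilde{\bx}\|^2-1$ vanishes on $S$, so $\|\tilde{\bx}\|^2=1$, hence $\tilde{\bx}\ne 0$ and $\tilde f>0$ on $S$. Each quadratic module $\qmod{\{1-u_i^2\}_{u_i\in\bu(\ell)},\bu(\ell)}$ is Archimedean, and $(\bu(1),\dots,\bu(q))$ satisfies the RIP by the chordal extension assumption. Crucially, because $\tilde f$ is positive definite, the $x_0\ge 0$ generator that appears in \reff{spsos1} is not needed, so the equality-constraint form of Theorem \ref{spaput} provides $\sigma_\ell^{(0)},\sigma_\ell^{(i)}\in\Sigma[\bu(\ell)]$ and $\tau_\ell^{(j)}\in\mathbb{R}[\bu(\ell)]$ with
\[\tilde f=\sum_{\ell=1}^q\Big(\sigma_\ell^{(0)}+\sum_{u_i\in\bu(\ell)}\sigma_\ell^{(i)}(1-u_i^2)+\sum_{j\in I_\ell}\tau_\ell^{(j)}h'_j\Big).\]

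It then remains to absorb each ``pure'' SOS piece $\sigma_\ell^{(0)}$ into the structured terms. Fix $\ell$ and choose any $j\in I_\ell$. The equation $h'_j=0$ rewrites $1$ as a positive combination of the squares $u_i^2$ for $u_i$ in the support of $h'_j$, modulo $\ideal{h'_j}$. Concretely, when $j\le p$, setting $a_j:=1+\sum_{x_i\in\bx(j)}\tfrac{1}{p_i}+\tfrac{1}{p}>1$ and substituting $u_i^2=1-(1-u_i^2)$ into $h'_j=0$ before multiplying by $\sigma_\ell^{(0)}$ yields
\[\sigma_\ell^{(0)}=\tfrac{1}{a_j-1}\Big(\sum_{x_i\in\bx(j)}\tfrac{\sigma_\ell^{(0)}}{p_i}(1-x_i^2)+\tfrac{\sigma_\ell^{(0)}}{p}(1-x_0^2)+\sigma_\ell^{(0)}(1-w_j^2)\Big)+\tfrac{\sigma_\ell^{(0)}}{a_j-1}h'_j.\]
An analogous identity $\sigma_\ell^{(0)}=\sum_k \sigma_\ell^{(0)}(1-w_k^2)+\tfrac{\sigma_\ell^{(0)}}{p-1}h'_{p+1}$ handles the case $j=p+1$ via $\|\w\|^2=p-1$. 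Since $\sigma_\ell^{(0)}$ is SOS and all scalar factors are positive, every coefficient multiplying a $(1-u_i^2)$ remains SOS; substituting back and merging with the existing $\sigma_\ell^{(i)}$ and $\tau_\ell^{(j)}$ produces the claimed representation.

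The main technical obstacle I anticipate is guaranteeing that each clique $\bu(\ell)$ owns at least one constraint, i.e.\ $I_\ell\ne\emptyset$. This holds under the canonical chordal extension arising from POP \reff{hommax1} (where each maximal clique naturally contains the support of some $h'_j$), but a ``floating'' clique created by a non-minimal chordal extension would carry no constraint; in that situation one must use the running intersection property to transport $\sigma_\ell^{(0)}$ along the clique tree to a neighbor with nonempty $I_{\ell'}$, which is the most delicate step of a complete proof.
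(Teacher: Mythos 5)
Your proof is correct in its core and follows the same route as the paper for the main step: both arguments reduce the theorem to positivity of $\tilde{f}$ on the compact set $S$ cut out by the equalities $h'_j=0$ and the box constraints $1-u_i^2\ge 0$, note that the clique-wise quadratic modules are Archimedean, and invoke the sparse Putinar Positivstellensatz (Theorem~\ref{spaput}). Your derivation of $\tilde{\bx}\neq\mathbf{0}$ from the identity $\sum_{\ell=1}^p h'_\ell-h'_{p+1}=\|\tilde{\bx}\|^2-1$ is a slightly slicker version of the paper's argument, which instead observes that $x_0=0$ and $\bx=\mathbf{0}$ would force $w_\ell^2=1$ for all $\ell$, contradicting $\|\w\|^2=p-1$. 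Where you genuinely go beyond the paper is the final absorption step: Theorem~\ref{spaput} delivers a representation containing a standalone SOS term $\sigma_\ell^{(0)}$ in each clique, whereas the stated conclusion has none, and the paper's proof simply asserts that ``the conclusion follows'' without addressing this. Your identity $a_j-1=h'_j+\sum_{x_i\in\bx(j)}\tfrac{1}{p_i}(1-x_i^2)+\tfrac{1}{p}(1-x_0^2)+(1-w_j^2)$ with $a_j>1$ does absorb $\sigma_\ell^{(0)}$ correctly (all variables involved lie in $\bu(\ell)$ since $h'_j\in\mR[\bu(\ell)]$), and your caveat about a ``floating'' clique with $I_\ell=\emptyset$ is a legitimate gap that the paper leaves unaddressed. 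Two small corrections: in the $j=p+1$ case the identity is $1=h'_{p+1}+\sum_k(1-w_k^2)$, so the multiplier of $h'_{p+1}$ should be $\sigma_\ell^{(0)}$ rather than $\sigma_\ell^{(0)}/(p-1)$; and the reason the $x_0\ge 0$ generator is dispensable is not positive definiteness of $\tilde{f}$ per se but the fact that $\tilde{f}>0$ already holds on the larger set $S$ defined without that constraint, which is exactly what you prove.
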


\begin{proof}
Let
\[
S\coloneqq \left\{(\tilde{\bx},\w)\in\mathbb{R}^{n+p+1}
\left| \baray{l}
\sum\limits_{x_i\in \bx(\ell) } \frac{1}{p_i}x_{i}^2+\frac{1}{p}x_0^2+w_{\ell}^2=1,\ell \in [p],\\
\|\w\|^2=p-1,\quad \mathbf{1}-\tilde{\bx}^{2}\ge\mathbf{0},\quad\mathbf{1} - \w^{2}\ge\mathbf{0}.\\
\earay \right. \right\}
\]
We show that $\tilde{f}>0$ on $S$. Take any $(\tilde{\bx},\w)\in S$. 
If $x_0\neq 0$, then we have $\tilde{f}(\tilde{\bx})=x_0^df(\bx/x_0)>0$; If $x_0=0$, then $\bx\neq\mathbf{0}$ and $\tilde{f}(\tilde{\bx})=f^{(\infty)}(\bx)>0$ since $f^{(\infty)}$ is positive definite.
Moreover, for each $\ell\in[q]$, the quadratic module $\sum\limits_{\ell=1}^q\qmod{\{1-u_i^2\}_{u_i\in\bu(\ell)},\bu(\ell)}$
is  Archimedean in $\mR[\bu(\ell)]$. Thus, the conclusion follows from Theorem \ref{spaput}.
\end{proof}

The following theorem addresses the constrained case. As the proof is quite similar to that of Theorem \ref{sparserez1}, we omit it for cleanliness.
\begin{theorem}
Notations follow Section \ref{sec4-1}. Suppose that $K$ is closed at infinity.
If $f>0$ on $K$ and $f$ is positive definite at infinity on $K$, then there exists $\sigma_{\ell}\in\qmod{\{x_0\}\cup\{1-u_i^2\}_{u_i\in\bu(\ell)}\cup\{\tilde{g}_j\}_{j\in J_\ell},\bu(\ell)}$ for each $\ell\in [q]$ and $\tau_{\ell,j}\in\mR[\bu(\ell)]$ for each $j\in I_{\ell}, \ell\in [q]$ such that
\begin{equation}
\tilde{f}=\sum\limits_{\ell=1}^q\left(\sigma_{\ell}+\sum_{j\in I_{\ell}}\tau_{\ell,j}h'_{j}\right).
\end{equation}
\end{theorem}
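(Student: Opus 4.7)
The plan is to mirror the proof of Theorem \ref{sparserez1} and the argument for Theorem \ref{spa:PV}(2), but carry it out in the sparse lifted set $\widetilde{K}_s^{2}$ rather than $\widetilde{K}_s^{1}$. Since no perturbation is present, the crux is to establish the strict inequality $\tilde{f} > 0$ on $\widetilde{K}_s^{2}$; once this is done, the sparse Putinar Positivstellensatz (Theorem \ref{spaput}) applied to the POP \reff{hommax1} delivers the desired representation immediately.

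For the positivity step, I would take any $(\tilde{\bx},\w) \in \widetilde{K}_s^{2}$ and split on whether $x_0 = 0$. If $x_0 > 0$, the constraints $\tilde{g}_j(\tilde{\bx}) \ge 0$ force $\bx/x_0 \in K$, so $\tilde{f}(\tilde{\bx}) = x_0^{d}\, f(\bx/x_0) > 0$ by the hypothesis $f > 0$ on $K$. If $x_0 = 0$, I would sum the spherical constraints $\sum_{x_i \in \bx(\ell)} \tfrac{1}{p_i} x_i^2 + \tfrac{1}{p}x_0^2 + w_\ell^2 = 1$ over $\ell \in [p]$; since each variable $x_i$ occurs in exactly $p_i$ cliques, the $x$-contributions telescope to $\|\bx\|^2$, yielding $\|\bx\|^2 + \|\w\|^2 = p$, and combined with $\|\w\|^2 = p-1$ this forces $\|\bx\|^2 = 1$. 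Using $\tilde{g}_j(0,\bx) = g_j^{(\infty)}(\bx) \ge 0$, we conclude $\bx \in K^{(\infty)}$, whence $\tilde{f}(0,\bx) = f^{(\infty)}(\bx) > 0$ by the assumption that $f^{(\infty)}$ is positive definite at infinity on $K$.

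With $\tilde{f} > 0$ on $\widetilde{K}_s^{2}$ secured, I would invoke Theorem \ref{spaput} for the POP \reff{hommax1} equipped with the csp $(\bu(1),\dots,\bu(q))$. The RIP is built in, since $(\bu(1),\dots,\bu(q))$ are the maximal cliques of a chordal extension of the csp graph. Archimedeanness of each local quadratic module $\qmod{\{x_0\}\cup\{1-u_i^2\}_{u_i\in\bu(\ell)}\cup\{\tilde{g}_j\}_{j\in J_\ell},\bu(\ell)}$ inside $\mR[\bu(\ell)]$ is automatic from the box constraints $1 - u_i^2 \ge 0$ for every $u_i \in \bu(\ell)$. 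Treating the equality constraints $h'_j = 0$ as pairs $h'_j \ge 0$, $-h'_j \ge 0$ then allows the $h'_j$-contributions to be absorbed into arbitrary polynomial multipliers $\tau_{\ell,j} \in \mR[\bu(\ell)]$, producing exactly the claimed decomposition $\tilde{f} = \sum_\ell \bigl(\sigma_\ell + \sum_{j\in I_\ell} \tau_{\ell,j} h'_j\bigr)$.

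The main obstacle is the boundary case $x_0 = 0$: one must exploit the specific combinatorial design of $\widetilde{K}_s^{2}$ (the frequencies $p_i$ in the weighted spherical constraints together with the auxiliary constraint $\|\w\|^2 = p-1$) to deduce $\|\bx\|^2 = 1$, which is what links the highest-degree section of $\widetilde{K}_s^{2}$ to $K^{(\infty)}$ and makes the positive-definiteness-at-infinity hypothesis applicable. The rest of the argument is routine once this link is in place.
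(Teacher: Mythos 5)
Your proposal is correct and matches the paper's intended argument: the paper omits this proof, stating it is analogous to Theorem~\ref{sparserez1}, and your write-up carries out exactly that analogue --- strict positivity of $\tilde f$ on $\widetilde{K}_s^{2}$ via the case split on $x_0$, followed by the sparse Putinar theorem with Archimedeanness from the box constraints and equality constraints absorbed as polynomial multipliers. Your treatment of the $x_0=0$ case is in fact slightly sharper than the paper's own handling of $\widetilde{K}_s^{2}$ in Theorem~\ref{alt:conve}, since you derive $\|\bx\|^2=1$ exactly (placing $\bx$ in $K^{(\infty)}$ directly) rather than only $\bx\neq\mathbf{0}$.
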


\subsection{An alternative sparse homogenized Moment-SOS hierarchy without perturbations}\label{alt:spa}
In the description of $\widetilde{K}_s^{2}$, there is a spherical constraint $\|\w\|^2=p-1$ involving all auxiliary variables. When the csp of \reff{1.1} contains a lot of variable cliques, this constraint would lead to a variable clique of big size in the csp of $\widetilde{K}_s^{2}$, which could significantly increase the computational complexity of the hierarchy \reff{spsos1}--\reff{spmom1}. To address this issue, in the following we propose an alternative sparse homogenized Moment-SOS reformulation without perturbations.

Suppose $(\bx(1),\dots,\bx(p))$ is a csp of \reff{1.1}. Recall that for $i\in[n]$, $p_i$ denotes the frequency of the variable $x_i$ occurring in $\bx(1), \dots, \bx(p)$. Define
\be 
\widetilde{K}_s^{3}\coloneqq\left\{(\tilde{\bx},\w)\in \mathbb{R}^{n+p}
\left| \baray{l}
x_{0} \geq 0,\quad\tilde{g}_{j}(\tilde{\bx}) \geq 0, \quad j\in[m], \\
\sum\limits_{x_i\in \bx(1)} \frac{1}{p_i}x_{i}^2+ \frac{1}{p}x_0^2=w_1^2, \\
\sum\limits_{x_i\in \bx(2)} \frac{1}{p_i}x_{i}^2+ \frac{1}{p}x_0^2+w_1^2=w_2^2,\\
\quad\quad\quad\quad\vdots\\
\sum\limits_{x_i\in \bx(p)} \frac{1}{p_i}x_{i}^2+ \frac{1}{p}x_0^2+w_{p-1}^2=1,\\
\mathbf{1} - \tilde{\bx}^{2}\ge\mathbf{0},\quad\mathbf{1} - \w^{2}\ge\mathbf{0},\\
\earay \right. \right\}
\ee
where $\w\coloneqq(w_1,\ldots,w_{p-1})$.
Consider the following sparse homogenized reformulation for \reff{1.1}:
\begin{equation}\label{hommax2} 
\left\{ \begin{array}{lll}
\sup & \gamma \\
\st &  \tilde{f}(\tilde{\bx})- \gamma x_0^d \geq 0, \quad\forall(\tilde{\bx},\w)\in \widetilde{K}_s^{3}.
\end{array} \right.
\end{equation}
Similarly, one can verify that \reff{hommax2} has the same optimal value with \reff{1.1}. Furthermore, the asymptotic convergence of the corresponding sparse homogenized Moment-SOS hierarchy for \reff{hommax2} as well as related sparse Positivstellens\"atze can be established using similar arguments as in the previous subsections. 

\vspace{1em}
So far, we have discussed how to exploit correlative sparsity for homogenized polynomial optimization but do not touch term sparsity. Actually, correlative sparsity and term sparsity can be exploited simultaneously to gain further reductions on the size of SDP relaxations arising from Moment-SOS hierarchies. We refer the reader to \cite{magron2023sparse,cstssos} for details.

\section{Numerical examples}\label{num:ex}
In this section, we present numerical results on solving POPs with three sparse homogenized Moment-SOS hierarchies.
All numerical experiments are performed on a desktop computer with Intel(R) Core(TM) i9-10900 CPU@2.80GHz and 64G RAM. To model the homogenized hierarchies, we use the Julia package {\tt TSSOS}\footnote{{\tt TSSOS} is freely available at \href{https://github.com/wangjie212/TSSOS}{https://github.com/wangjie212/TSSOS}.} \cite{magron2021tssos}, relying on {\tt Mosek} 10.0 \cite{mosek} as an SDP backend with default settings. Unless
otherwise specified, we set $\epsilon=10^{-4}$ for relaxations \reff{spsos}--\reff{spmom}. We do not implement and compare with the approach proposed in \cite{mai2023sparse} since it is limited to problems of modest size. Notations are listed in Table \ref{not}.

\begin{table}[htbp]\label{not}
\caption{Notation}
\centering
\begin{tabular}{c||c}
$n$&number of variables\\
\hline
$k$&relaxation order\\
\hline
opt&optimum\\
\hline
time&running time in seconds\\
\hline
SSOS&the sparse SOS relaxation \reff{spasos}\\
\hline
HSOS&the dense homogenized SOS relaxation\\
\hline
HSSOS1&the sparse homogenized SOS relaxation \reff{spsos}\\
\hline
HSSOS2&the sparse homogenized SOS relaxation \reff{spsos1}\\
\hline
HSSOS3&the alternative sparse homogenized SOS relaxation in Section \ref{alt:spa}\\
\hline
{\bf bold font}&global optimality being certified\\
\hline
*&indicating unknown termination status\\
\hline
$**$&infeasible SDP\\
\hline
-&returning an out of memory error\\
\end{tabular}
\end{table}

\subsection{Unconstrained polynomial optimization}
\begin{example}
Let $\bx(1)=\{x_1,x_2,x_3\}$ and $\bx(2)=\{x_1,x_2,x_4\}$. Consider   POP \reff{1.1}  with csp $(\bx(1),\bx(2))$, where
$$f=f_1+f_2,\quad f_1=x_3^2(x_1^2+x_1^4 x_2^2+ x_3^4-3 x_1^2 x_2^2)+x_2^8,\quad f_2=x_1^2 x_2^2 x_4^2.$$
The polynomial $f_1$ is the dehomogenized Delzell's polynomial, which is nonnegative but not an SOS \cite{reznick2000some}. This example is a variation of Example 1 in \cite{mai2023sparse}. As shown in \cite{mai2023sparse}, $f$ is nonnegative and $f\notin \Sigma\left[\bx(1)\right]+\Sigma\left[\bx(2)\right]$. By solving \eqref{spsos} with $\epsilon=0$ and $k=5$, we obtain $f_5\approx-1.6\times10^{-7}$, which confirms $f_5=f_{\min}=0$ (up to numerical round-off errors). 
\end{example}

\begin{example} \label{un:ex3}
Let
$$\bx(1)=\{x_1,x_2,x_3,x_4\}, \quad \bx(2)=\{x_4,x_5,x_6,x_7\}, \quad \bx(3)=\{x_7,x_8,x_9,x_{10}\}.$$
Consider POP \reff{1.1} with  csp $(\bx(1), \bx(2), \bx(3))$, where $f=f_1+f_2+f_3$ $(x_0\coloneqq1)$
\begin{align*}
f_1&=\sum_{i=1}^4 x_i^4+\sum_{i=0}^4 \prod_{j \neq i}\left(x_i-x_j\right), \\
f_2&=\sum_{i=4}^7 x_i^4+\sum_{i=0,4,\dots,7} \prod_{j \neq i}\left(x_i-x_j\right), \\
f_3&=\sum_{i=7}^{10} x_i^4+\sum_{i=0,7,\dots,10} \prod_{j \neq i}\left(x_i-x_j\right).
\end{align*}
Here we set $\epsilon=0$ for HSSOS1. The numerical results for this problem are presented in Table \ref{tabun:ex3}. From the table, we can draw the following conclusions: (1) Without homogenization, the sparse hierarchy converges slowly; (2) By exploiting sparsity, we gain a significant speed-up especially when the relaxation order is high; (3) All three sparse homogenized Moment-SOS hierarchies achieve the optimum $f_{\min}\approx0.6927$ at the third order relaxation.	 

\begin{table}[htbp]\label{tabun:ex3}
\caption{Results of Example \ref{un:ex3}}
\renewcommand\arraystretch{1.2}
\centering
\resizebox{\linewidth}{!}{
\begin{tabular}{c|c|c|c|c|c|c|c|c|c|c}
\multirow{2}{*}{$k$}& \multicolumn{2}{c|}{SSOS}& \multicolumn{2}{c|}{HSOS}& \multicolumn{2}{c|}{HSSOS1}& \multicolumn{2}{c|}{HSSOS2}& \multicolumn{2}{c}{HSSOS3}\\
\cline{2-11}
& opt &time & opt &time &opt &time&opt &time&opt &time\\
\hline
2 &0.5497&0.02& 0.5497 & 0.05&0.5497&0.03&0.5497&0.02&0.5497&0.03\\
\hline
3 &0.5497&0.21& {\bf 0.6927} & 13.3&{\bf 0.6927}&0.37&{\bf 0.6927}&0.15&{\bf 0.6927}&0.20\\
\hline
4 &0.5864*&0.73& {\bf 0.6927}& 683 &{\bf 0.6927}&3.27&{\bf 0.6927}&1.38&{\bf 0.6927}&1.77\\
\end{tabular}}
\end{table}

\end{example}

\begin{example} \label{un:ex4}
Let
\begin{align*}
\bx(1)&=\{x_1,x_2,x_3,x_4,x_5\}, &\bx(2)&=\{x_1,x_2,x_6,x_7,x_8\}, \\
\bx(3)&=\{x_1,x_2,x_9,x_{10},x_{11}\},&\bx(4)&=\{x_1,x_2,x_{12},x_{13},x_{14}\},\\ \bx(5)&=\{x_1,x_2,x_{15},x_{16},x_{17}\}, &\bx(6)&=\{x_1,x_2,x_{18},x_{19},x_{20}\}.
\end{align*}
Consider POP \reff{1.1} with csp $(\bx(1), \bx(2), \bx(3), \bx(4), \bx(5), \bx(6))$, where $f=\sum_{i=1}^6f_i$, and for $i=1,\dots,6$,
\begin{align*}
f_i=&\,x_{1}^{2}\left(x_{1}-1\right)^{2}+x_{2}^{2}\left(x_{2}-1\right)^{2}+
x_{3i}^{2}\left(x_{3i}-1\right)^{2}   +2 x_{1} x_{2} x_{3i}\left(x_{1}+x_{2}+x_{3i}-2 \right)\\	
&  +\frac{1}{4}(\left(x_{1}-1\right)^{2}+\left(x_{2}-1\right)^{2}+\left(x_{3i}-1\right)^{2}+\left(x_{3i+1}-1\right)^{2})+(x_{3i+1}x_{3i+2}-1)^2.	
\end{align*}
Here we set $\epsilon=0$ for HSSOS1. The numerical results for this problem are presented in Table \ref{tabun:ex4}. From the table, we can draw the following conclusions: (1) Without homogenization, the sparse hierarchy converges slowly; (2) By exploiting sparsity, we gain a significant speed-up and reach relaxations of higher orders; (3) HSSOS1 achieves the optimum at $k=4$ and HSSOS3 achieves the optimum at $k=3$, whereas HSSOS2 gives wrong answers due to numerical issues when $k\ge3$.

\begin{table}[htbp]\label{tabun:ex4}
\caption{Results of Example \ref{un:ex4}}
\renewcommand\arraystretch{1.2}
\centering
\resizebox{\linewidth}{!}{
\begin{tabular}{c|c|c|c|c|c|c|c|c|c|c}
\multirow{2}{*}{$k$}& \multicolumn{2}{c|}{SSOS}& \multicolumn{2}{c|}{HSOS}&\multicolumn{2}{c|}{HSSOS1}&\multicolumn{2}{c|}{HSSOS2}&\multicolumn{2}{c}{HSSOS3}\\
\cline{2-11}
& opt &time & opt &time &opt &time&opt &time&opt &time\\
\hline
2 &1.1804&0.01& 1.1804 &0.54&1.1804&0.04&1.1804&0.09&1.1804&0.11\\
\hline
3 &1.1804&0.07& - & - &1.1895&0.34&1.1969*&1.18&{\bf 1.1900}&0.96\\
\hline
4 &1.1809&0.40& - & - &{\bf 1.1900}&1.48&1.4871*&17.4&1.1901&5.94\\
\end{tabular}}
\end{table}

\end{example}

\subsection{Constrained polynomial optimziation}
\begin{example} \label{con:ex1}
Let $$\bx(1)=\{x_1,x_2\}, \quad \bx(2)=\{x_2,x_3\}, \quad \bx(3)=\{x_2,x_4,x_5\}.$$
Consider  POP \reff{1.1} with csp $(\bx(1), \bx(2), \bx(3))$:
\be
\left\{ \begin{array}{lll}
\inf & x_1^2+3x_2^2-2x_2x_3^2+x_3^4-x_2(x_4^2+ x_5^2)\\
\st  & x_1^2-2x_1x_2-1\geq 0,\, x_1^2+2x_1x_2-1\geq 0,\\
& x_2^2-1 \geq 0, \,x_2-x_6^2- x_7^2\geq 0.
\end{array}\right.
\ee
For this problem, the optimal value is $4+2\sqrt{2}\approx6.8284$. The numerical results of this problem are presented in Table \ref{tabcon:ex1}. From the table, we can draw the following conclusions: (1) Without homogenization, the sparse hierarchy converges slowly; (2) HSOS, HSSOS2, and HSSOS3 all achieve the optimum at $k=4$, while HSSOS1 converges more slowly.

\begin{table}[htbp]\label{tabcon:ex1}
\renewcommand\arraystretch{1.2}
\centering 
\caption{Results of Example \ref{con:ex1}}
\resizebox{\linewidth}{!}{
\begin{tabular}{c|c|c|c|c|c|c|c|c|c|c}
\multirow{2}{*}{$k$}& \multicolumn{2}{c|}{SSOS}& \multicolumn{2}{c|}{HSOS}& \multicolumn{2}{c|}{HSSOS1}& \multicolumn{2}{c|}{HSSOS2}&\multicolumn{2}{c}{HSSOS3}\\
\cline{2-11} 
& opt &time & opt &time &opt &time  &opt &time&opt &time\\
\hline
2 &2.0000&0.05&2.0000& 0.01&2.0048&0.01&2.0000&0.01&2.0000&0.01\\
\hline
3 & 2.0343* &0.05 & 5.1310* & 0.06&2.8286&0.05 &4.9449*&0.09&4.9678&0.09\\
\hline
4 & 2.1950*& 0.07 &{\bf 6.8284}& 0.19&4.1178&0.19&{\bf 6.8284}&0.15&{\bf 6.8284}&0.18\\
\end{tabular}}
\end{table}

\end{example}

\begin{example}\label{con:ex2}
Let $$\bx(1)=\{x_1,x_2,x_3,x_7\}, \quad \bx(2)=\{x_4,x_5,x_6,x_7\}.$$
Consider POP \reff{1.1} with csp $(\bx(1), \bx(2))$:
\begin{equation*}
\left\{ \begin{array}{lll}
\inf & f_1+f_2 \\
\st  & x_1-x_2x_3\ge0,-x_2+x_3^2\ge0,1-x_4^2-x_5^2-x_6^2\ge0,
\end{array}\right.
\end{equation*}
where
\begin{align*}
f_1&=x_1^4x_2^2+x_2^4x_3^2+x_3^4x_1^2-3(x_1x_2x_3)^2+x_2^2 + x_7^2(x_1^2+x_2^2+x_3^2),\\
f_2&=x^2_4x^2_5(10-x^2_6)+x_7^2(x^2_4+2x^2_5+3x^2_6).
\end{align*}
For this problem, the optimal value is $0$ \cite{qu2022correlative}. The numerical results are presented in Table \ref{tabcon:ex2}. From the table, we make the following observations: (1) Without homogenization, the sparse hierarchy either yields infeasible SDPs or gives very looser bounds; (2) By exploiting sparsity, we gain some speed-up; (3) HSOS achieves the optimum at $k=4$, and both HSSOS2 and HSSOS3 achieves the optimum at $k=5$, while HSSOS1 converges to a near neighbourhood of $f_{\min}$ at $k=4$.

\begin{table}[htbp]\label{tabcon:ex2}
\caption{Results of Example \ref{con:ex2}}
\renewcommand\arraystretch{1.2}
\centering
\resizebox{\linewidth}{!}{
\begin{tabular}{c|c|c|c|c|c|c|c|c|c|c}
\multirow{2}{*}{$k$}& \multicolumn{2}{c|}{SSOS}& \multicolumn{2}{c|}{HSOS}&\multicolumn{2}{c|}{HSSOS1}&\multicolumn{2}{c|}{HSSOS2}&\multicolumn{2}{c}{HSSOS3}\\
\cline{2-11}
& opt &time & opt &time &opt &time&opt &time&opt &time\\
\hline
3&$**$&0.04&-4532&0.28&-1756*&0.16&-1065*&0.24&-1106*&0.20\\
\hline
4&$**$&0.19&{\bf -1.6e-8}&2.71&0.0001&0.82&-0.0002&1.37&-0.0002&1.77\\
\hline
5&-4.0e5&0.89&{\bf -9.8e-9}&33.4&0.0001&5.33&{\bf 1.1e-7}&6.98&{\bf 1.4e-7}&6.38\\
\end{tabular}}
\end{table}

\end{example}

\begin{example}	\label{un:ex5}
For an integer $p\ge2$, let
$$\bx(i)=\{x_{8i-7},x_{8i-6},\dots,x_{8i+2}\}, \quad i\in[p].$$
For $i\in[p]$, let
\begin{align*}
f_i=&\,\left(\sum\limits_{j=1}^{10} \left(x_j^{(i)}\right)^2+1\right)^2-4\left(\left(x_1^{(i)} x_2^{(i)}\right)^2+\cdots+\left(x_4^{(i)} x_5^{(i)}\right)^2+\left(x_5^{(i)} x_1^{(i)}\right)^2\right)\\	
& -4\left(\left(x_6^{(i)} x_7^{(i)}\right)^2+\cdots+\left(x_9^{(i)} x_{10}^{(i)}\right)^2+\left(x_{10}^{(i)} x_6^{(i)}\right)^2\right) +\frac{1}{5}\sum\limits_{j=1}^{10} \left(x_j^{(i)}\right)^4.
\end{align*}
Consider POP \reff{1.1} with csp $(\bx(1),\bx(2),\ldots,\bx(p))$:
\begin{equation*}
\left\{ \begin{array}{lll}
\inf & \sum_{i=1}^pf_i \\
\st  & \|\bx(i)^{2}\|^2-1\ge0,\quad i=1,\ldots,p.
\end{array}\right.
\end{equation*}
We solve the fourth order relaxations for different $p$.
The numerical results for this problem are presented in Table \ref{tabun:ex5}. From the table, we can draw the following conclusions: (1) Without homogenization, the sparse relaxation yields very looser bounds; (2) For $p=2,3$, HSOS achieves the optimum while for $p\ge4$, HSOS runs out of memory; (3) By exploiting sparsity, we improve the scalability of the homogenization approach and still obtain good bounds.

\begin{table}[htbp]\label{tabun:ex5}
\caption{Results of Example \ref{un:ex5}}
\renewcommand\arraystretch{1.2}
\centering
\resizebox{\linewidth}{!}{
\begin{tabular}{c|c|c|c|c|c|c|c|c|c|c}
\multirow{2}{*}{$p$}& \multicolumn{2}{c|}{SSOS}& \multicolumn{2}{c|}{HSOS}&\multicolumn{2}{c|}{HSSOS1}&\multicolumn{2}{c|}{HSSOS2}&\multicolumn{2}{c}{HSSOS3}\\
\cline{2-11}
& opt &time & opt &time &opt &time&opt &time&opt &time\\
\hline
2 &-11053*&2.72&{\bf 6.1488}&156&6.0984&14.6&{\bf 6.1488}&15.5&{\bf 6.1488}&12.8\\
\hline
3 & -18999* &4.14  & {\bf 9.2232} & 2763 &9.1475&20.1&9.2227&20.6&9.2228&31.3\\
\hline
4 & -26984* &5.14  & - & - &12.196&29.4&12.294&30.4&12.295&55.1\\
\hline
5 & -31198* &6.54  & - & - &15.246&39.0&15.365&39.4&15.364&69.4\\
\hline
10 & -80847*& 12.8& - & - &30.491&101&30.543&122&30.504&170\\
\end{tabular}}
\end{table}

\end{example}

\begin{example}\label{con:ex3}
We generate random instances of quadratic  optimization on unbounded sets as follows. For $n\in\{20,40,100,200,400,800,2000\}$, let $p=\lceil \frac{n}{3}\rceil$. Let 
{\small
\[
\bx(1)=\{x_1,x_2,x_3\},\,\bx(i)=\{x_{3(i-1)},\ldots,x_{3i}\}~(i=2,\ldots,p-1),\,\bx(p)=\{x_{3(p-1)},\ldots,x_{n}\}.
\]
}
Let $A_1\in\R^{3\times3}$, $b_1\in\R^{3}$, $A_i\in\R^{4\times4},b_i\in\R^{4}~(i=2,\ldots,p-1)$,  $b_{p}\in\R^{n-3p+4}$, $A_{p}\in\R^{(n-3p+4)\times(n-3p+4)}$ be randomly generated with entries being uniformly taken from $[0,1]$. For $i=1,\ldots,p$, let $f_i=\|A_i\bx(i)^{2}\|^2+b_i^{\intercal}\bx(i)^{2}$. 
Consider the POP with csp $(\bx(1),\bx(2),\ldots,\bx(p))$:
\begin{equation*}
\left\{ \begin{array}{lll}
\inf & \sum_{i=1}^pf_i \\
\st  & \|\bx(i)^{2}\|^2-1\ge0,\quad i=1,\ldots,p.
\end{array} \right.
\end{equation*}
We solve the fourth order relaxations for different $n$.
The numerical results for this problem are presented in Table \ref{tabcon:ex3}. From the table, we can draw the following conclusions: (1) Without homogenization, the sparse relaxation yields much looser bounds; (2) By exploiting sparsity, we gain a significant speed-up; (3) Both HSOS and HSOSS2 do not scale well with the problem size (HSOS runs out of memory when $n\ge40$ and HSOSS2 runs out of memory when $n\ge100$); (4) Both HSSOS1 and HSOSS3 scale well with the problem size (up to $n=2000$).

\begin{table}[htbp]\label{tabcon:ex3}
\caption{Results of Example \ref{con:ex3}}
\renewcommand\arraystretch{1.2}
\centering
\resizebox{\linewidth}{!}{
\begin{tabular}{c|c|c|c|c|c|c|c|c|c|c}
\multirow{2}{*}{$n$}& \multicolumn{2}{c|}{SSOS}& \multicolumn{2}{c|}{HSOS}&\multicolumn{2}{c|}{HSSOS1}&\multicolumn{2}{c|}{HSSOS2}&\multicolumn{2}{c}{HSSOS3}\\
\cline{2-11}
& opt &time & opt &time &opt &time&opt &time&opt &time\\
\hline
20&5.5065&0.33&{\bf 8.8328}&240&8.4216&0.93&{\bf 8.8328}&1.21&{\bf 8.8328}&1.52\\ 
\hline
40&11.813&0.35&-&-&17.481&1.51&18.059&29.7&17.856&3.59\\ 
\hline
100&27.976&1.29&-&-&42.273&6.94&-&-&41.336&16.3\\
\hline
200&60.178&2.62&-&-&87.726&19.7&-&-&82.240&52.9\\
\hline
400&111.35&6.52&-&-&164.06&55.4&-&-&146.66*&190\\
\hline
800&228.42&18.7&-&-&337.01&229&-&-&296.70*&702\\
\hline
2000&577.88&88.0&-&-&854.14*&1736&-&-&768.31*&6424\\
\end{tabular}}
\end{table}

\end{example}


\section{Applications to trajectory optimization}
\label{tra:opt}
Trajectory optimization plays an essential role in the fields of robotics and control~\cite{betts1998jgcd-traopt-survey}.~\cite{teng2023arxiv-geometricmotionplanning-liegroup} demonstrates applying the sparse Moment-SOS hierarchy to specific trajectory optimization problems with compact feasible sets tends to yield tight solutions. However, assuming pre-defined bounds over all physical quantities can be unrealistic. For instance, it is particularly hard to bound the generalized momentum of highly nonlinear systems or the contact forces in contact-rich scenarios~\cite{aydinoglu2021tro-stabilization-complementary}. In such contexts, the ability to relax the compactness assumption while still achieving tight solutions is desirable. 
In this section, we explore two trajectory optimization problems with unbounded feasible sets: (1) block-moving with minimum work using direct collocation; (2) optimal control of Van der Pol oscillator with direct multiple shooting. We compare the performance of SSOS, HSSOS1, and HSSOS3 (noting that HSOS and HSSOS2 do not scale with large clique numbers).

\subsection{Block-moving with minimum work}

The continuous time version of block-moving with minimum work is shown as follows~\cite{kelly2017siam-intro-directcollocation}:
\begin{equation}
    \label{eq:sec6-bmw-ct}
    \left\{ \begin{array}{rl}
    \min\limits_{u(\tau), x_1(\tau), x_2(\tau)} &\int_{\tau=0}^{1} |u(\tau) x_2(\tau)| \,\rm{d}\tau \\
    \text{s.t. } \quad\,\,\,\,\,& \frac{d}{dt} \begin{bmatrix}
        x_1 \\ x_2
    \end{bmatrix} = f(x, u) = \begin{bmatrix}
        x_2 \\ u
    \end{bmatrix}, \\
    & x_1(0) = 0, x_2(0) = 0, x_1(1) = 1, x_2(1) = 0 ,
    \end{array} \right.
\end{equation}
where $x_1$ and $x_2$ are the block's position and velocity respectively. Starting from the origin in state space $\bx(0) = [0, 0]^{\intercal}$, our goal is to push the block to a terminal state $\bx(1) = [1, 0]^{\intercal}$ at time $t=1$, while minimizing the work done. To achieve this, slack variables are introduced, and direct collocation is applied to discretize~\eqref{eq:sec6-bmw-ct}. This process results in the following POP:
\begin{equation}
    \label{eq:sec6-bmw-pop}
    \left\{ \begin{array}{rl}
    \min\limits_{\substack{
        u_k, \ k = 0, \dots, N \\
        x_{k, 1}, x_{k, 2}, \ k = 0, \dots, N \\
        s_{k, 1}, s_{k, 2}, \ k = 0, \dots, N 
    }} &\sum_{k=0}^N (s_{k, 1} + s_{k, 2}) \cdot h \\
    \text{s.t. } \quad\quad\,\,& s_{k, 1} \ge 0, s_{k, 2} \ge 0, \quad k = 0, \dots, N,  \\
    & s_{k, 1} - s_{k, 2} = u_k \cdot x_{k, 2}, \quad k = 0, \dots, N,  \\
    & \dot{x}_{k, c} = f(x_{k, c}, u_{k, c}), \quad k = 0, \dots, N-1, \\
    & x_{0, 1} = 0, x_{0, 2} = 0, x_{N, 1} = 1, x_{N, 2} = 0, 
    \end{array} \right.
\end{equation}
where $N$ is the total time steps and $h$ is the time step. Since the terminal time is fixed as $1$, $N \cdot h = 1$ should hold. Here $\dot{x}_{k, c}$, $x_{k, c}$, and $u_{k, c}$ in~\eqref{eq:sec6-bmw-pop} stems from the collocation constraints.


It should be noted that~\eqref{eq:sec6-bmw-pop} is a non-convex problem due to the inclusion of quadratic equality constraints.~\eqref{eq:sec6-bmw-pop} exhibits a chain-like csp. Specifically, if we consider the $k$th clique as $(u_{k-1}, x_{k-1}, s_{k-1}, u_k, x_k, s_k), k=1,\dots,N$, the RIP is satisfied due to the Markov property. Setting relaxation order $k$ to $2$, we test the three algorithms' performance on multiple values of $N$ and $u_{\max}$. For HSSOS1, the perturbation parameter $\epsilon$ is set to $10^{-4}$. The results are shown in Table~\ref{tab:bmw}. Note that we also reported the sub-optimality gap $\eta$ between SDP's solution and the solution refined by nonlinear programming solvers (here we use MATLAB's fmincon). Denote SDP's optimal value as $f_{\text{lower}}$ and fmincon's local minimum as $f_{\text{upper}}$. Then $\eta$ is defined as
\begin{equation}
    \label{eq:suboptimality-gap}
    \eta = \frac{
        |f_{\text{upper}} - f_{\text{lower}}|
    }{
        1 + |f_{\text{upper}}| + |f_{\text{lower}}|
    }.
\end{equation}

This gap is shown in logarithmic form, i.e., $\log_{10}\eta$. 
From Table~\ref{tab:bmw}, we see that HSSOS3 achieves tight solutions in all parameter settings, with the sub-optimiality gap always lower than $10^{-4}$. However, both SSOS and HSSOS1 suffer from numerical issues. Further trajectory visualizations for $u(t)$ are given in Figure~\ref{fig:bmw}.  


\begin{table}[htbp]
    \label{tab:bmw}
    \caption{Results of the block-moving example}
    \centering
    \renewcommand\arraystretch{1.2}
    \resizebox{\linewidth}{!}{
    \begin{tabular}{c|c|c|c|c|c|c|c|c|c|c}
         \multirow{2}{*}{$N$} & \multirow{2}{*}{$u_{\max}$} & \multicolumn{3}{c|}{SSOS} & \multicolumn{3}{c|}{HSSOS1} & \multicolumn{3}{c}{HSSOS3}  \\
         \cline{3-11}
         & & opt & time & gap & opt & time & gap & opt & time & gap \\
         \hline 
         \multirow{6}{*}{$10$} &  10&  $1.1164$&  2.90&  $-2.62$&  $0.5044^*$&  $25.9$&  $-0.43$&  $1.1111$&  $10.0$&  $-12.0$\\
         \cline{2-11}
         &  12&  $0.9660$&  3.26&  $-2.74$&  $0.4391^*$&  $26.5$&  $-0.43$&  $0.9625$&  $12.5$& $-10.5$\\
         \cline{2-11}
         &  14&  $0.8100^*$&  6.30&  $-2.01$&  $0.3774^*$&  $24.2$&  $-0.45$&  $0.7944$&  $10.8$& $-10.2$\\
         \cline{2-11}
         &  16&  $0.6370^*$&  4.72&  $-1.61$&  $0.3101^*$&  $26.4$&  $-0.49$&  $0.6069$&  $13.9$& $-12.9$\\
         \cline{2-11}
         &  18&  $0.6448^*$&  5.43&  $-0.63$&  $0.2565^*$&  $23.4$&  $-0.55$&  $0.4000$&  $10.6$& $-7.89$\\
         \cline{2-11}
         &  20&  $0.3651^*$&  3.41&  $-0.45$&  $0.1370$&  $28.4$&  $-0.92$&  $0.1741$&  $11.9$& $-8.17$\\
         \hline 
         \multirow{6}{*}{$20$} &  10&  $1.2301^*$&  7.35&  $-2.76$&  $0.6873^*$&  $51.0$&  $-0.55$&  $1.2266$&  $29.3$&  $-10.9$\\
         \cline{2-11}
         &  12&  $1.1725^*$&  10.4&  $-1.97$&  $0.6663^*$&  $59.6$&  $-0.57$&  $1.1476$&  $27.7$& $-11.1$\\
         \cline{2-11}
         &  14&  $1.1410^*$&  12.7&  $-1.46$&  $0.6600^*$&  $84.2$&  $-0.63$&  $1.0646$&  $34.3$& $-7.40$\\
         \cline{2-11}
         &  16&  $1.0924^*$&  9.23&  $-1.40$&  $0.6181^*$&  $56.3$&  $-0.62$&  $1.0096$&  $26.6$& $-8.56$\\
         \cline{2-11}
         &  18&  $1.0854^*$&  8.31&  $-1.21$&  $0.5918^*$&  $59.6$&  $-0.63$&  $0.9591$&  $27.1$& $-8.04$\\
         \cline{2-11}
         &  20&  $1.0278^*$&  8.70&  $-1.19$&  $0.5680^*$&  $74.7$&  $-0.64$&  $0.9036$&  $29.7$& $-8.66$\\
         \hline 
         \multirow{6}{*}{$30$} &  10&  $1.2724^*$&  9.50&  $-2.02$&  $0.8184^*$&  $73.5$&  $-0.68$&  $1.2483$&  $42.7$&  $-9.22$\\
         \cline{2-11}
         &  12&  $1.2107^*$&  11.5&  $-1.92$&  $0.8024^*$&  $23.2$&  $-0.72$&  $1.1822$&  $63.0$& $-8.10$\\
         \cline{2-11}
         &  14&  $1.1985^*$&  10.1&  $-1.52$&  $0.7787^*$&  $23.4$&  $-0.74$&  $1.1294$&  $44.3$& $-7.91$\\
         \cline{2-11}
         &  16&  $1.1804^*$&  10.1&  $-1.41$&  $0.7649^*$&  $23.5$&  $-0.75$&  $1.0923$&  $47.4$& $-8.27$\\
         \cline{2-11}
         &  18&  $1.1718^*$&  8.38&  $-1.27$&  $0.7436^*$&  $22.6$&  $-0.76$&  $1.0532$&  $54.9$& $-7.06$\\
         \cline{2-11}
         &  20&  $1.2133^*$&  6.74&  $-1.04$&  $0.7331^*$&  $27.1$&  $-0.80$&  $1.0119$&  $67.7$& $-4.07$\\

    \end{tabular}}
\end{table}

\begin{figure}[htbp]
    \begin{minipage}{\textwidth}
        \centering
        $N = 10$
        \vspace{0.2cm}
        \begin{tabular}{ccc}
            \begin{minipage}{0.31\textwidth}
                \vspace{0.2cm}
                \centering
                \includegraphics[width=\columnwidth]{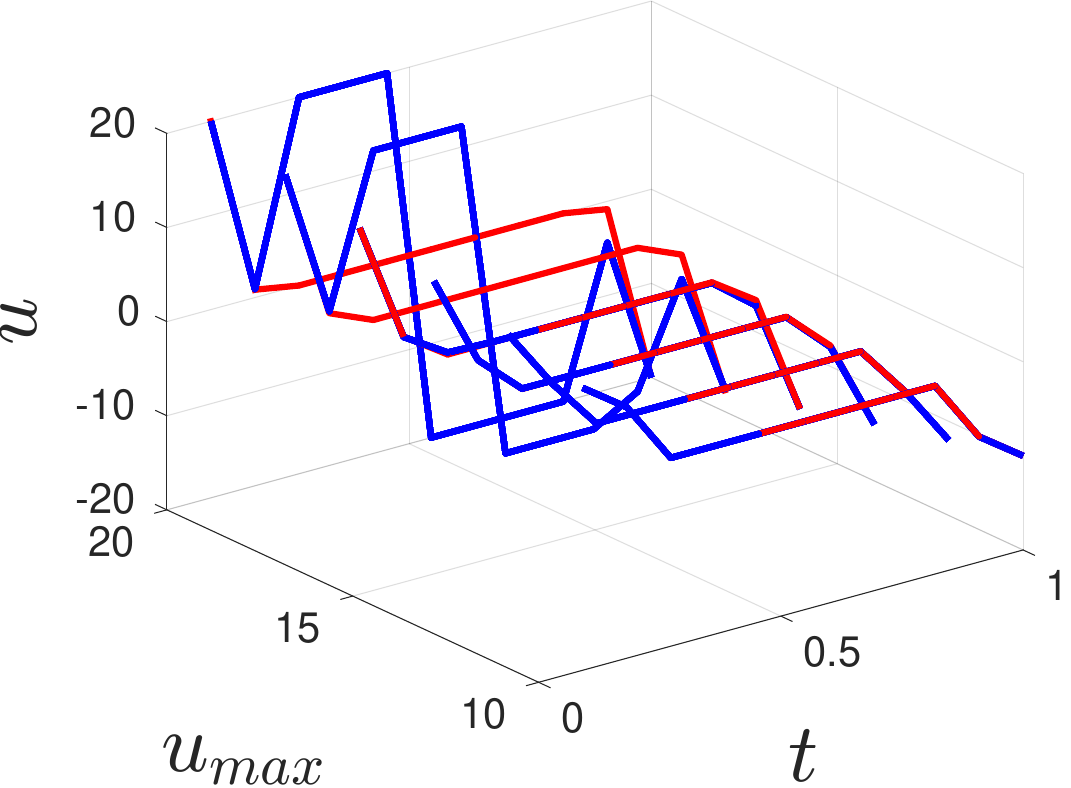}
            \end{minipage}
            
            \begin{minipage}{0.31\textwidth}
                \vspace{0.2cm}
                \centering
                \includegraphics[width=\columnwidth]{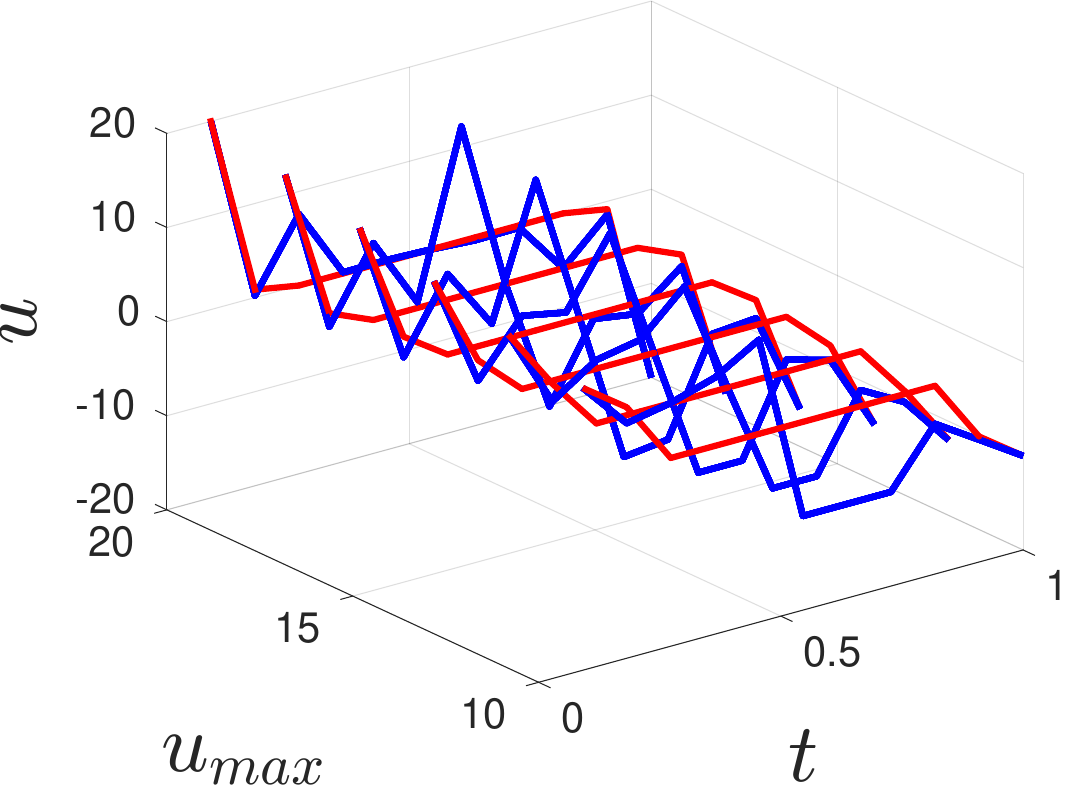}
            \end{minipage} 
            
            \begin{minipage}{0.31\textwidth}
                \vspace{0.2cm}
                \centering
                \includegraphics[width=\columnwidth]{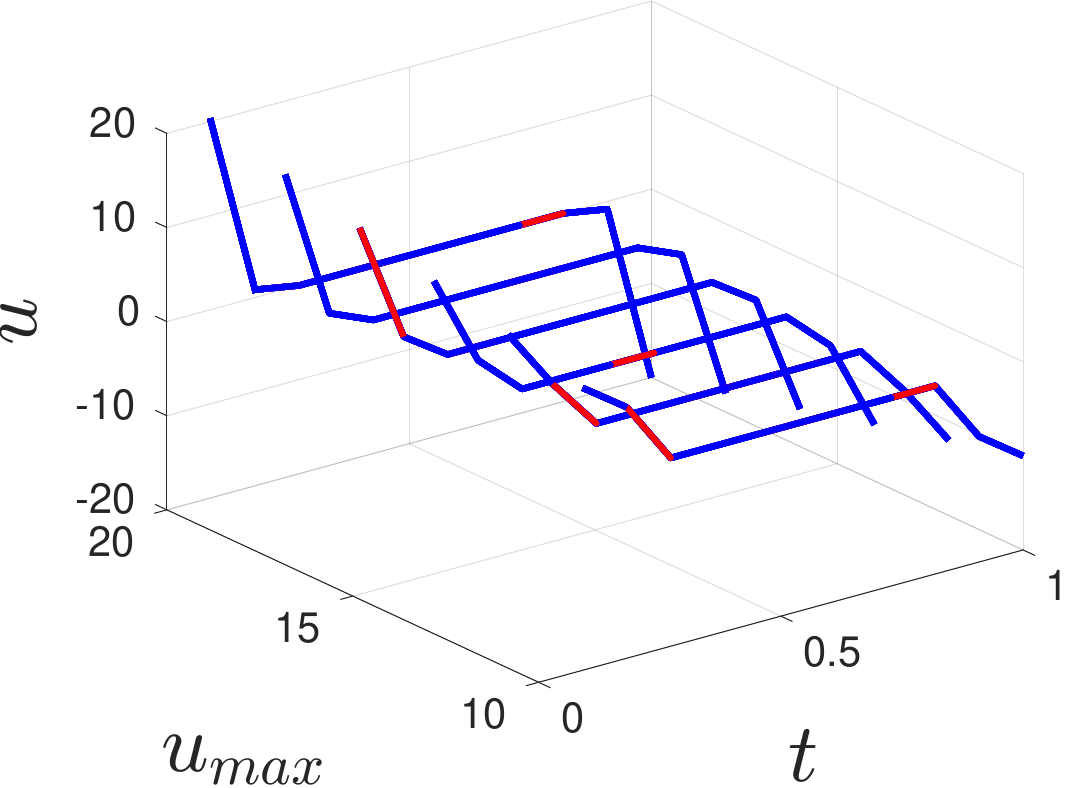}
            \end{minipage} 
        \end{tabular}
    \end{minipage}

    \begin{minipage}{\textwidth}
        \centering
        $N = 20$
        \vspace{0.2cm}
        \begin{tabular}{ccc}
            \begin{minipage}{0.31\textwidth}
                \vspace{0.2cm}
                \centering
                \includegraphics[width=\columnwidth]{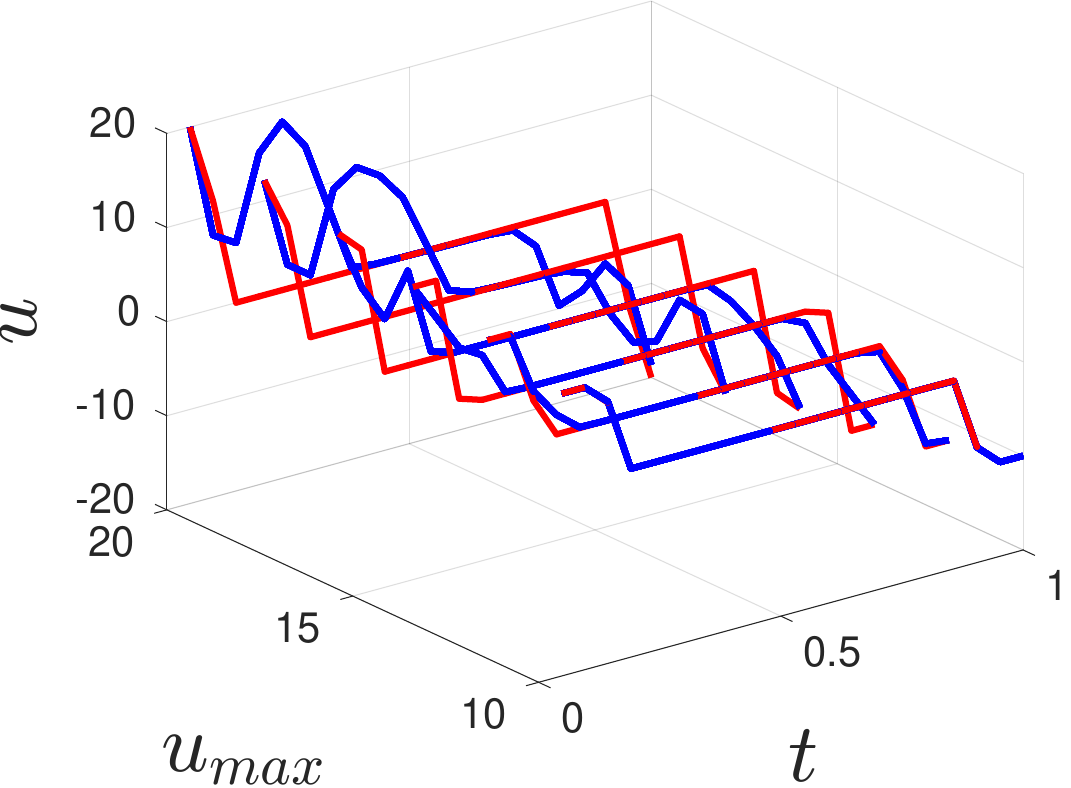}
            \end{minipage}
            
            \begin{minipage}{0.31\textwidth}
                \vspace{0.2cm}
                \centering
                \includegraphics[width=\columnwidth]{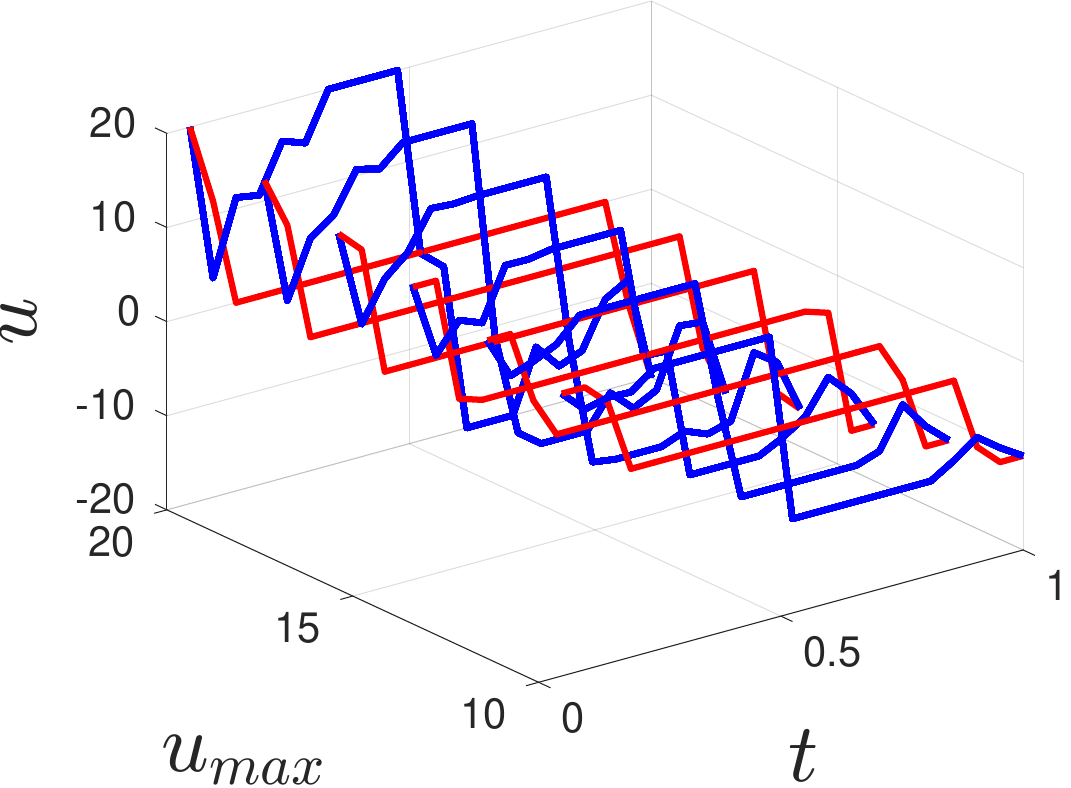}
            \end{minipage} 
            
            \begin{minipage}{0.31\textwidth}
                \vspace{0.2cm}
                \centering
                \includegraphics[width=\columnwidth]{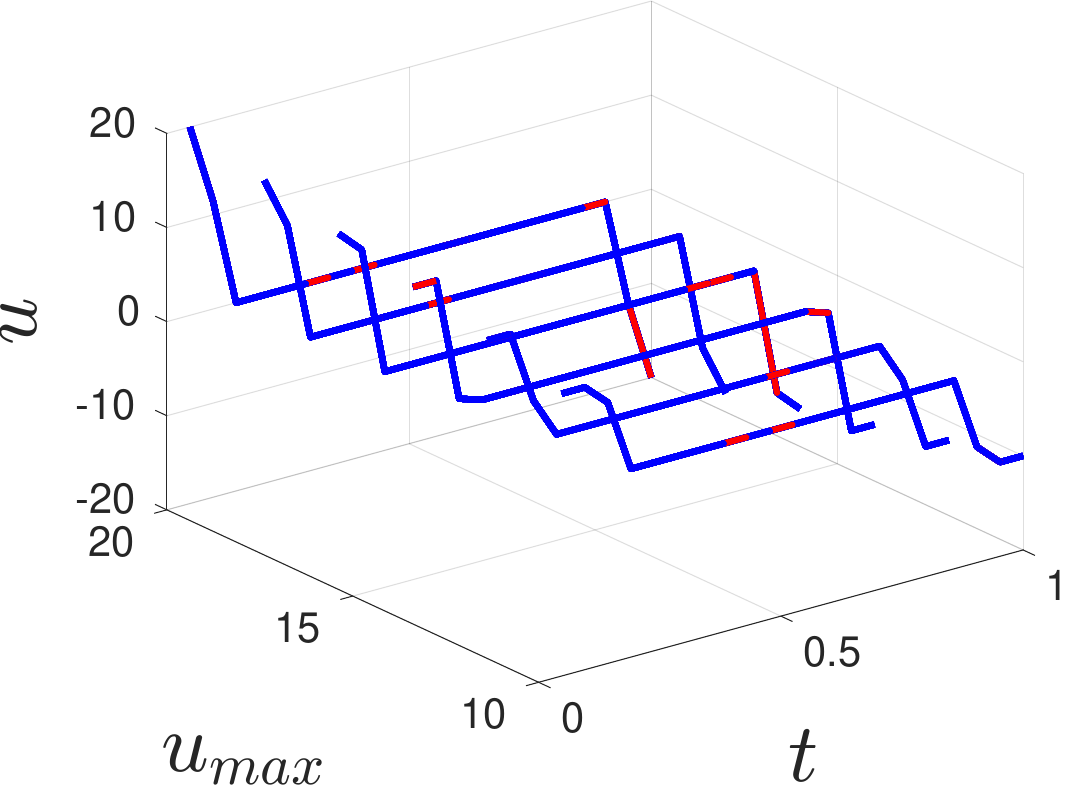}
            \end{minipage} 
        \end{tabular}
    \end{minipage}

    \begin{minipage}{\textwidth}
        \centering
        $N = 30$
        \vspace{0.2cm}
        \begin{tabular}{ccc}
            \begin{minipage}{0.31\textwidth}
                \vspace{0.2cm}
                \centering
                \includegraphics[width=\columnwidth]{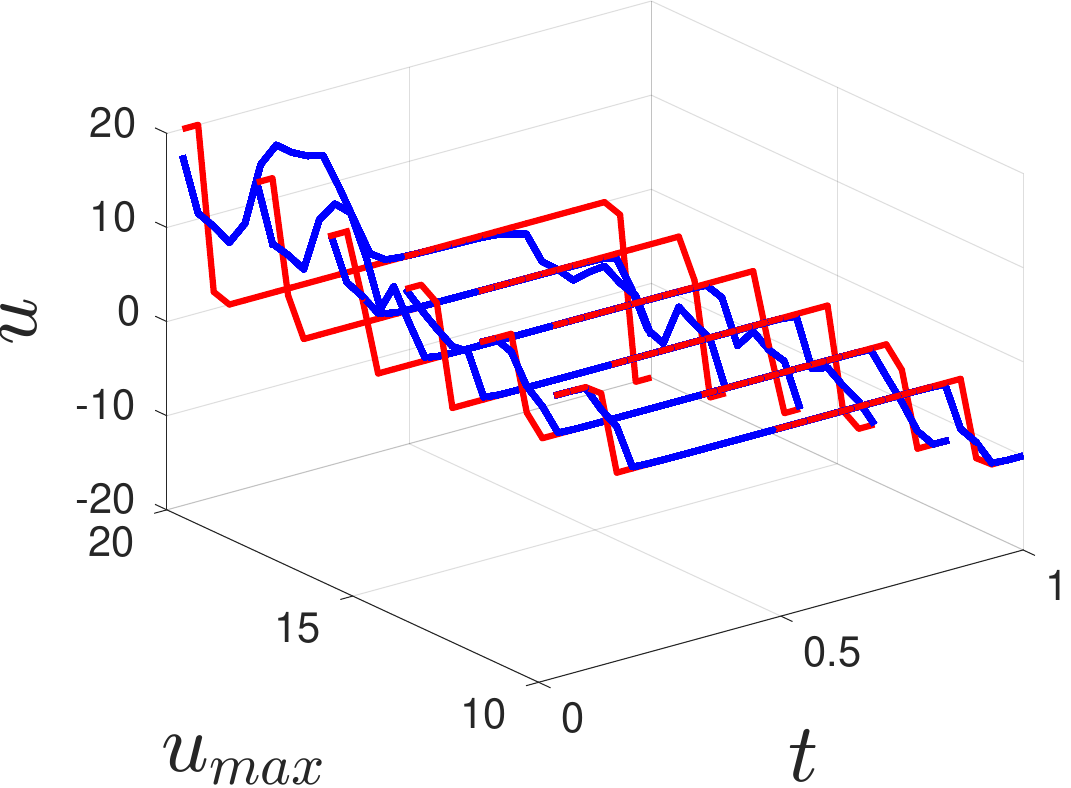}
                SSOS
            \end{minipage}
            
            \begin{minipage}{0.31\textwidth}
                \vspace{0.2cm}
                \centering
                \includegraphics[width=\columnwidth]{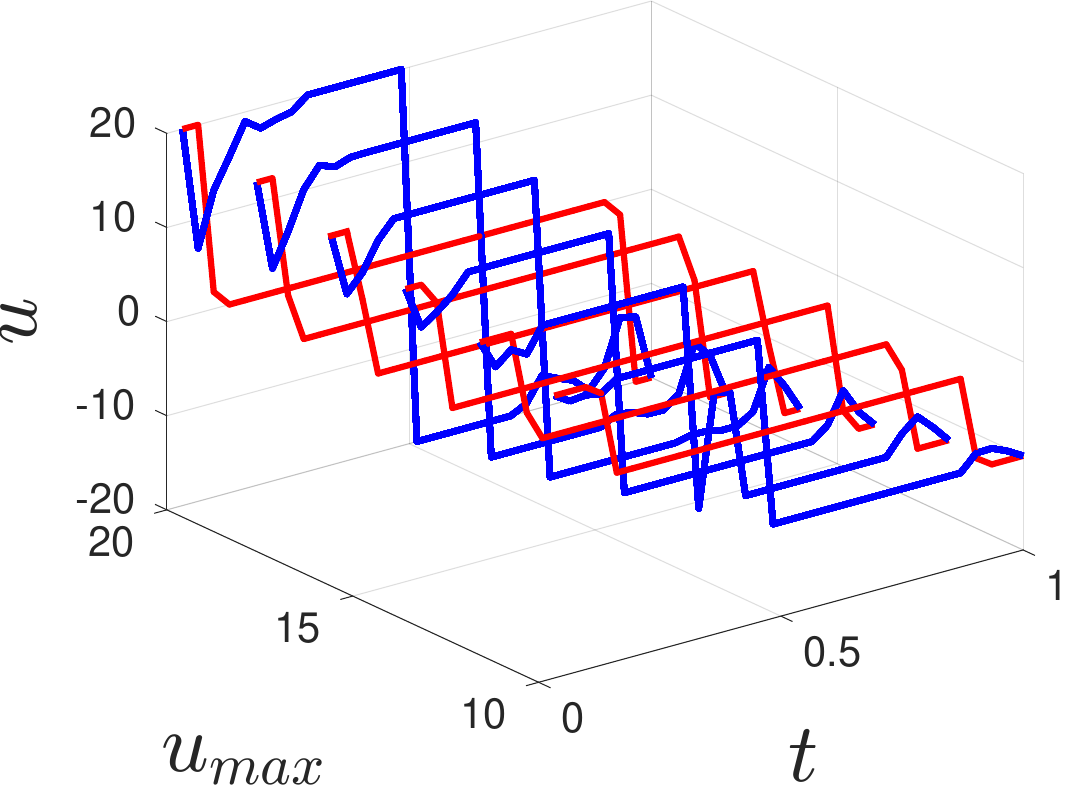}
                HSSOS1
            \end{minipage} 
            
            \begin{minipage}{0.31\textwidth}
                \vspace{0.2cm}
                \centering
                \includegraphics[width=\columnwidth]{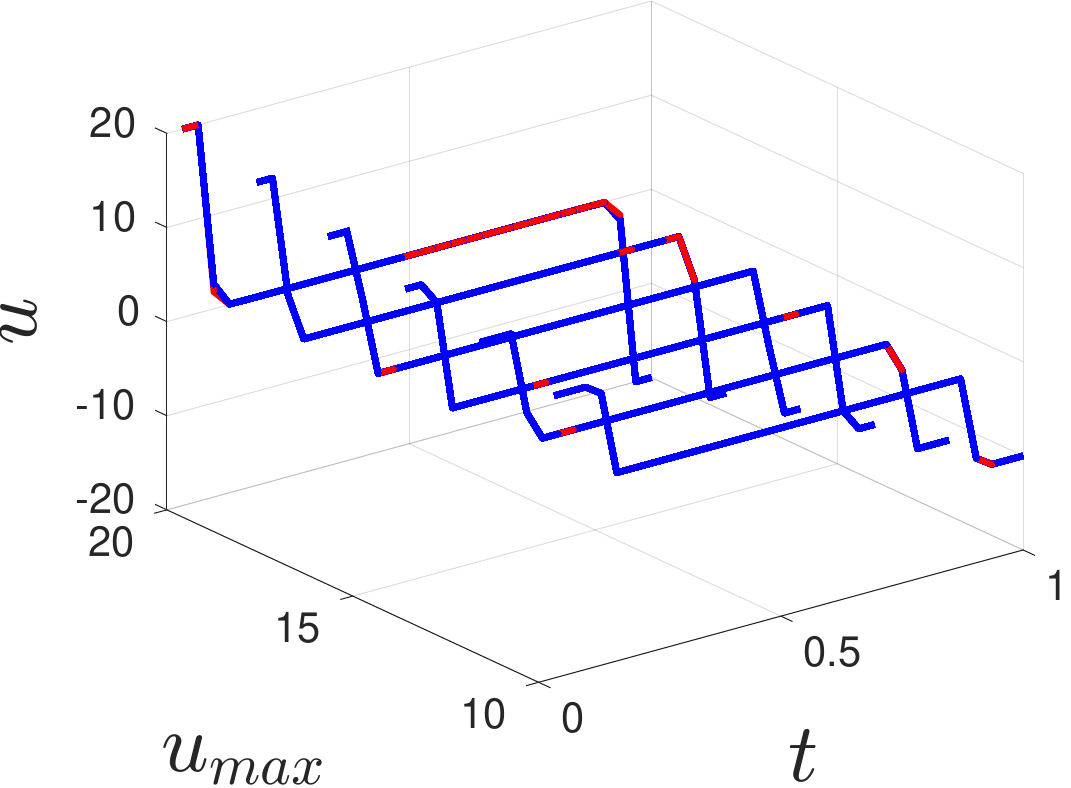}
                HSSOS3
            \end{minipage} 
        \end{tabular}
    \end{minipage}

    \caption{Comparison between SDP's solutions (blue lines) and solutions refined by fmincon (red lines) in the block-moving example. In HSSOS3, red lines and blue lines are nearly indistinguishable, indicating the attainment of tight solutions.}
    \label{fig:bmw}
\end{figure}

\subsection{Optimal control of Van der Pol}
Now we consider the optimal control problem for a Van der Pol oscillator~\cite{guckenheimer2003jads-vanderpol}, a highly nonlinear and potentially unstable system. Its continuous time dynamics is
\begin{equation}
    \label{eq:sec6-vanderpol-dyn}
    f(\bx, u) = \frac{d}{dt} \begin{bmatrix}
        x_1 \\ x_2
    \end{bmatrix} = \begin{bmatrix}
        (1 - x_2^2) x_1 - x_2 + u \\
        x_1
    \end{bmatrix}.
\end{equation}

Here $\bx = [x_1, x_2]^{\intercal}$ is the system state and $u$ is the control input. Utilizing the direct multiple shooting technique allows for the trajectory optimization problem as follows:
\begin{equation}
    \label{eq:sec6-traopt}
    \left\{ \begin{array}{rl}
    \min\limits_{\substack{
        u_k, \ k = 0, \dots, N-1 \\
        x_k, \ k = 0, \dots, N
    }} &\sum_{k=0}^{N-1} ( u_k^2 + \left\lVert x_k \right\rVert^2 ) \cdot h + \left\lVert x_N \right\rVert^2 \cdot dt \\
    \text{s.t.}\quad\quad\, & x_{k+1} = x_k + f(x_k, u_k) \cdot h, \quad k = 0, \dots, N-1, \\
    & u_{\max}^2 - u_k^2 \ge 0, \quad k = 0, \dots, N-1, \\
    & x_0 = x_{\text{init}},
    \end{array} \right.
\end{equation}
where $N$ is the total time steps and $h$ is the step length. Like~\eqref{eq:sec6-bmw-pop}, POP~\eqref{eq:sec6-traopt} also exhibits a chain-like csp by assigning the $N$ sequential cliques as $\{ (x_{k-1}, u_{k-1}, x_k) \}_{k=1}^N$. However,~\eqref{eq:sec6-traopt} does not fulfill the Archimedeanness assumption since the variables $\{x_k\}_{k=1}^N$ are not subject to any bound. With the relaxation order $k=2$, we incrementally vary $N$ from $10$ to $100$ in steps of $10$. At each $N$, the performance of three algorithms is assessed using $36$ predetermined initial states. Table~\ref{tab:v} presents the average results across these states. From the table, we can draw the conclusion that the extracted solutions of HSSOS3 are better than those of SSOS and HSSOS1 in terms of achieving one or two order of magnitudes lower sub-optimality gap $\eta$. Further comparison for solutions extracted from SDP relaxations and refined by fmincon are shown in Figure~\ref{fig:v}. Interestingly, despite the varying initial guesses supplied by the three algorithms, they all converge to identical refined solutions.


\begin{table}[htbp]
    \label{tab:v}
    \caption{Results of the Van der Pol example}
    \centering
    \renewcommand\arraystretch{1.2}
    \resizebox{\linewidth}{!}{
    \begin{tabular}{c|c|c|c|c|c|c|c|c|c}
         \multirow{2}{*}{$N$} & \multicolumn{3}{c|}{SSOS} & \multicolumn{3}{c|}{HSSOS1} & \multicolumn{3}{c}{HSSOS3}  \\
         \cline{2-10}
         & opt & time & gap & opt & time & gap & opt & time & gap \\
         \hline 
         $10$ & $11.559$ & $0.17$ & $-5.93$ & $11.454$ & $0.48$ & $-2.94$ & $11.559$ & $0.51$ & $-7.55$ \\
         \hline 
         $20$ & $18.457$ & $0.33$ & $-5.05$ & $18.230$ & $0.97$ & $-2.57$  & $18.534$ & $1.32$ & $-7.15$ \\
         \hline 
         $30$ & $23.485$ & $0.55$ & $-3.98$ & $23.012$ & $2.50$ & $-1.93$ & $23.734$ & $5.11$ & $-6.28$ \\
         \hline 
         $40$ & $26.728$ & $0.78$ & $-2.06$ & $25.760$ & $3.89$ & $-1.54$ & $27.419$ & $9.68$ & $-5.99$ \\
         \hline 
         $50$ & $28.122$ & $1.64$ & $-1.57$ & $27.418$ & $12.6$ & $-1.44$  & $29.780$ & $21.1$ & $-5.62$ \\
         \hline 
         $60$ & $28.655$ & $2.07$ & $-1.44$ & $28.434$ & $25.5$ & $-1.45$  & $31.058$ & $42.5$ & $-5.00$ \\
         \hline 
         $70$ & $28.782$ & $1.12$ & $-1.37$ & $29.118$ & $5.07$ & $-1.49$ & $31.768$ & $18.3$ & $-4.94$ \\
         \hline 
         $80$ & $28.874$ & $1.44$ & $-1.35$ & $29.582$ & $5.63$ & $-1.55$  & $32.131$ & $10.7$ & $-4.34$\\
         \hline 
         $90$ & $28.978$ & $1.52$ & $-1.34$ & $29.918$ & $9.79$ & $-1.65$  & $32.235$ & $33.6$ & $-4.22$\\
         \hline 
         $100$ & $29.033$ & $1.74$ & $-1.34$ & $30.198$ & $10.2$ & $-1.71$  & $32.257$ & $13.5$ & $-4.03$ \\
         
    \end{tabular}}
\end{table}


\begin{figure}[htbp]
    \begin{minipage}{\textwidth}
        \centering
        \begin{tabular}{cc}
            \begin{minipage}{0.47\textwidth}
                SSOS
                \vspace{0.2cm}
                \centering
                \includegraphics[width=\columnwidth]{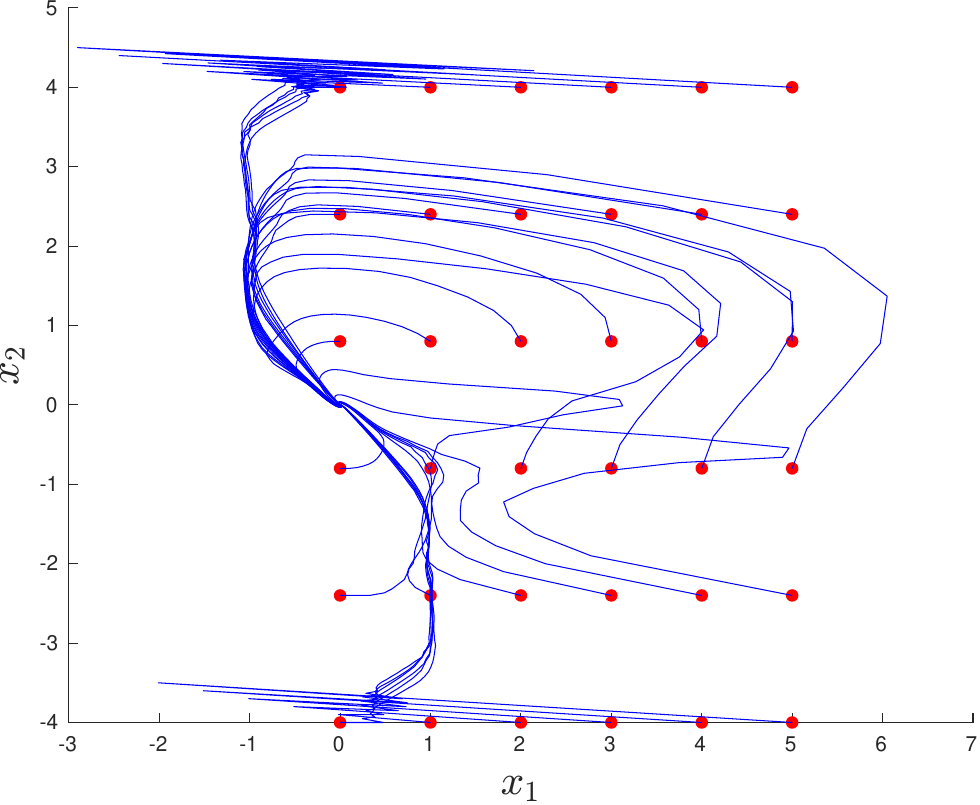}
            \end{minipage}
            
            \begin{minipage}{0.47\textwidth}
                HSSOS1
                \vspace{0.2cm}
                \centering
                \includegraphics[width=\columnwidth]{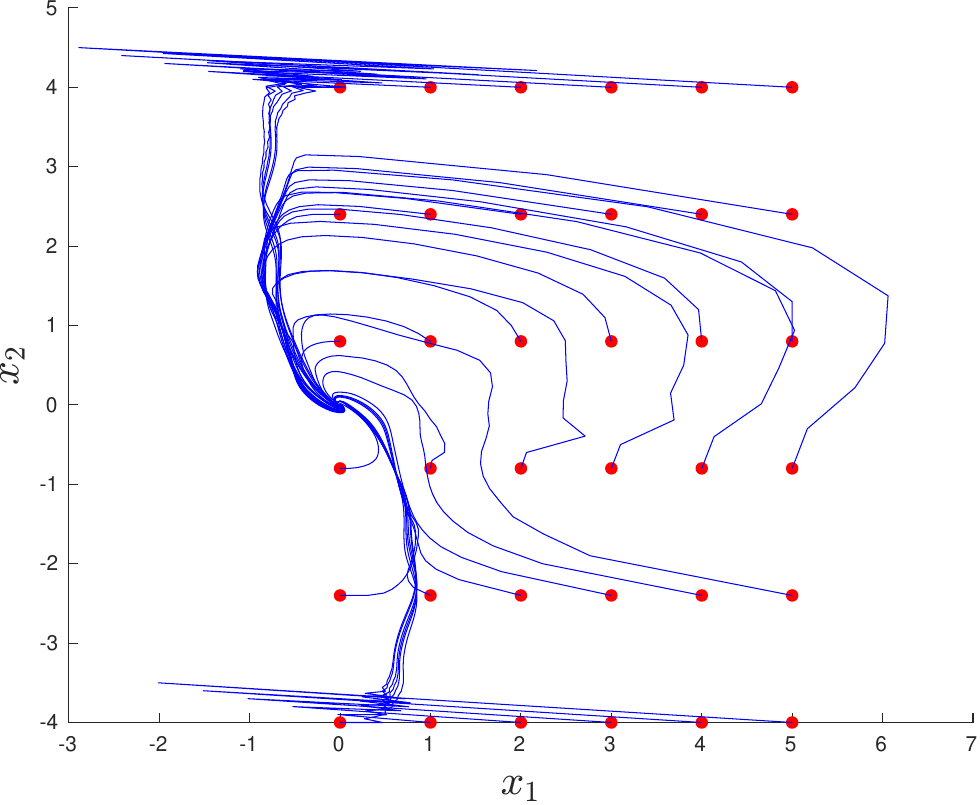}
            \end{minipage}  
        \end{tabular}
    \end{minipage}

    \begin{minipage}{\textwidth}
        \centering
        \vspace{0.2cm}
        \begin{tabular}{cc}
            \begin{minipage}{0.47\textwidth}
                HSSOS3
                \vspace{0.2cm}
                \centering
                \includegraphics[width=\columnwidth]{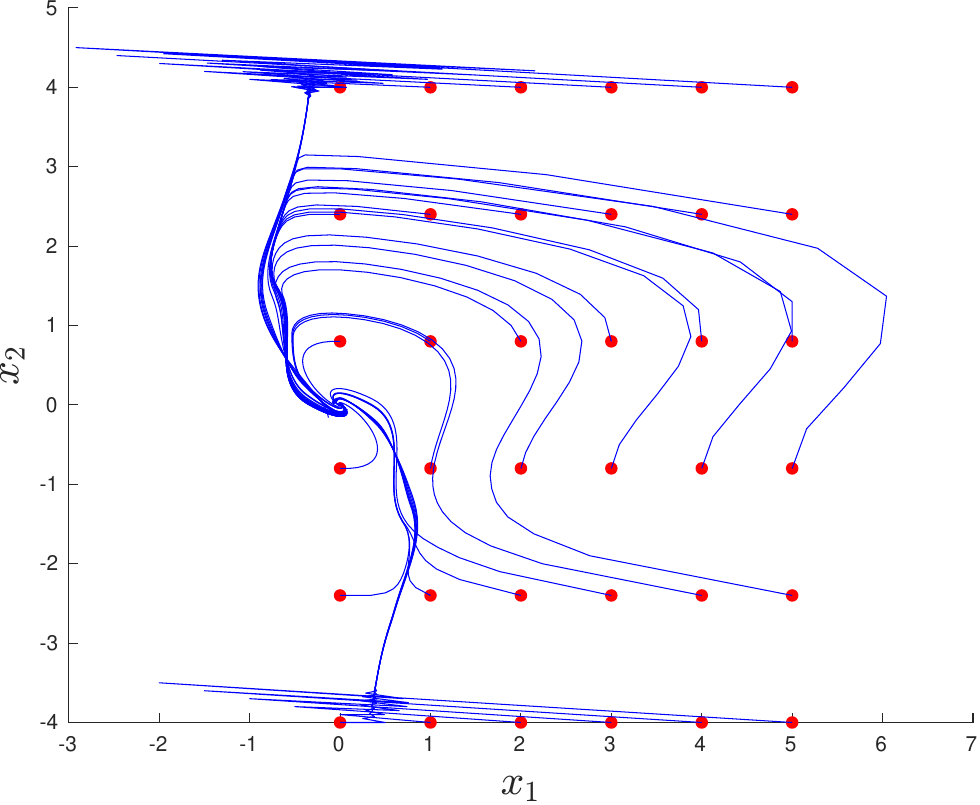}
            \end{minipage}
            
            \begin{minipage}{0.47\textwidth}
                fmincon
                \vspace{0.2cm}
                \centering
                \includegraphics[width=\columnwidth]{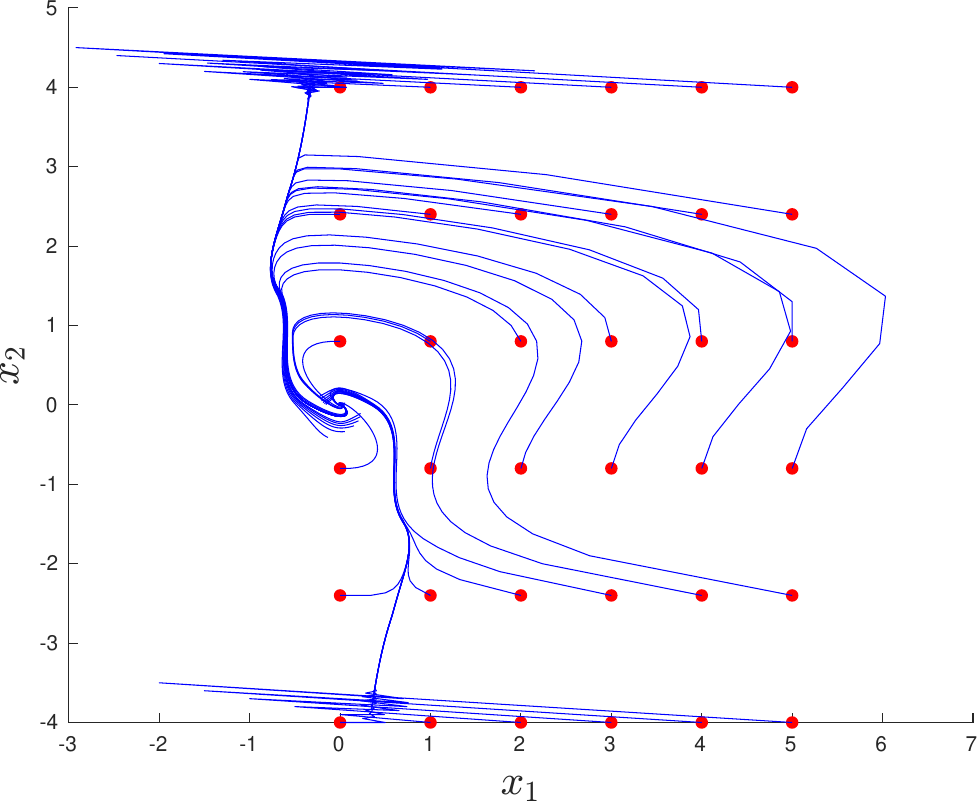}
            \end{minipage} 
        \end{tabular}
    \end{minipage}

    \caption{Comparison between SDP's solutions and solutions refined by fmincon in the Van der Pol example. Notably, all three algorithms' initial guesses lead to the same refined trajectories. Among these initial guesses, the one offered by HSSOS3 is of the best quality. }
    \label{fig:v}
\end{figure}

\section{Conclusions and discussions}
\label{sec:con}
In this paper, we propose the sparse homogenized Moment-SOS hierarchies to solve sparse polynomial optimization with unbonunded sets. We have shown the asymptotic convergence under the RIP and 
extensive numerical experiments demonstrate the power of our approach in solving problems with up to thousands of variables. Furthermore, we provide applications to two trajectory optimization problems and obtain global solutions of high accuracy.

Recently, polynomial upper bounds on the convergence rate of the Moment-SOS hierarchy with correlative sparsity \reff{spasos}--\reff{spamom} are obtained in 
\cite{converatesc}. It is promising to get similar convergence rates for our sparse homogenized hierarchies with additional considerations on the behaviour of $f$ at infinity of $K$.
In Section~\ref{sc:shmg2}, we propose 
two sparse homogenized Moment-SOS hierarchies without perturbations at the price of possibly increasing the maximal clique size. When tackling application problems, an interesting question is to explore how to construct correlative sparsity patterns with a small maximal clique size as the computational cost of sparse relaxations largely depends on this quantity.


\bibliographystyle{siamplain}
\bibliography{refer}

\end{document}